\documentclass[12pt, leqno]{amsart}

%%%%%%%%%%%
%PREAMBLE

\setlength{\textwidth}{15.5cm} \setlength{\textheight}{20cm}
\setlength{\oddsidemargin}{0.0cm}
\setlength{\evensidemargin}{0.0cm}
\addtolength{\hoffset}{0.5cm}

\usepackage{graphicx}
\usepackage{amssymb,amsmath,amsthm,amscd}
\usepackage{mathrsfs}
\usepackage{enumerate}
\usepackage{enumitem}
\usepackage{verbatim}
\usepackage[usenames,dvipsnames]{color}
\usepackage[colorlinks=true, pdfstartview=FitV,
linkcolor=blue,citecolor=blue,urlcolor=blue]{hyperref}
\usepackage[all]{xy}
\usepackage{tikz}
\usepackage{tikz-cd}
\usetikzlibrary{matrix, arrows}
\usepackage{stmaryrd} 
\usepackage{dsfont}
\usepackage{bm} %bold font for greek letters
\usepackage{scalerel}[2016/12/29] 
\usepackage{amsthm}

\allowdisplaybreaks[4]

\newcommand{\nc}{\newcommand}

\numberwithin{equation}{section}

\usepackage[colorinlistoftodos]{todonotes}
\usepackage{mathtools}
\mathtoolsset{showonlyrefs,showmanualtags}
\usepackage{empheq}

%environments
\theoremstyle{plain}
\newtheorem{lemma}{Lemma}[section]
\newtheorem{prop}[lemma]{Proposition}
\newtheorem{theorem}[lemma]{Theorem}
\newcommand{\Prop}{\begin{prop}}
	\newcommand{\enprop}{\end{prop}}
\newcommand{\Lemma}{\begin{lemma}}
	\newcommand{\enlemma}{\end{lemma}}
\newcommand{\Th}{\begin{theorem}}
	\newcommand{\enth}{\end{theorem}}
\newtheorem{corollary}[lemma]{Corollary}
\newcommand{\Cor}{\begin{corollary}}
	\newcommand{\encor}{\end{corollary}}
\newtheorem{definition}[lemma]{Definition}

\newcommand{\Def}{\begin{definition}}
	\newcommand{\edf}{\end{definition}}
\newtheorem{sublemma}[lemma]{Sublemma}
\newcommand{\Sublemma}{\begin{sublemma}}
	\newcommand{\ensub}{\end{sublemma}}
\newtheorem{assum}{Assumption}

\theoremstyle{definition}
\newtheorem{remark}[lemma]{Remark}

\newtheorem{Convention}[lemma]{Convention}
\newcommand{\Conv}{\begin{Convention}}
	\newcommand{\enconv}{\end{Convention}}
\nc{\Rem}{\begin{remark}}
	\nc{\enrem}{\end{remark}}

\newcommand{\arxiv}[1]{\href{http://arxiv.org/abs/#1}{\tt arXiv:\nolinkurl{#1}}}

%%%

\nc{\rmkend}{\hfill$\triangledown$}
\nc{\defend}{\hfill$\triangle$}

%%%

%meta
\newcommand{\on}{\operatorname}
\nc{\be}{\begin{enumerate}}
	\nc{\ee}{\end{enumerate}}
\newcommand{\eq}{\begin{equation}}
	\newcommand{\eneq}{\end{equation}}
\nc{\bc}{\begin{cases}}
	\nc{\ec}{\end{cases}}
\newcommand{\eqn}{\begin{eqnarray*}}
	\newcommand{\eneqn}{\end{eqnarray*}}
\newcommand{\ba}{\begin{array}}
	\newcommand{\ea}{\end{array}}

%rep th
\nc{\Rp}{\Phi^+}
\nc{\mb}{\underline{m}}
\nc{\acts}{\mathrel{\raisebox{2pt}{$\scaleobj{0.65}{\varolessthan}$}}}
\nc{\lacts}{\mathrel{\hat{\raisebox{2pt}{$\scaleobj{0.65}{\varolessthan}$}}}}
\nc{\lcirc}{\mathrel{\hat{\circ}}}
\nc{\tdelta}{{}^\theta\Delta} 
\nc{\tnabla}{{}^\theta\nabla}
\nc{\tL}{{}^\theta L}
\nc{\wor}{J^\bullet}
\nc{\twor}{{}^\theta J^\bullet}
\nc{\ch}{\operatorname{ch}}
\nc{\gdim}{\operatorname{dim}_q}
\nc{\sh}{\operatorname{sh}}
\nc{\word}{J^\theta}
\nc{\good}{J^\beta_+}
\nc{\tgood}{{}^\theta J^\beta_+}
\nc{\gooda}{J^\bullet_+}
\nc{\tgooda}{{}^\theta J^\bullet_+}
\nc{\tgoodb}{{}^\theta J^\bullet_{+,0}}
\nc{\tgoodc}{{}^\theta J^\bullet_{+,c}}
\nc{\tw}{{}^\theta w}
\nc{\EK}{{}^\theta \mathcal{B}(\mathfrak{g})}
\nc{\Qq}{\mathcal{K}}
\nc{\Ql}{\mathcal{A}}
\nc{\EKm}{{}^\theta V(\lambda)}
\nc{\EKmb}{{}^\theta \mathbf{V}(\lambda)} 
\nc{\EKmz}{{}^\theta V}
\nc{\EKmi}{\EKm_{\Ql}} 
\nc{\EKmiz}{\EKmz_{\Ql}} 
\nc{\tcat}{{}^\theta \gamma}
\nc{\Um}{U_q^-(\mathfrak{g})}
\nc{\EKmiu}{\EKmi^{\mathrm{up}}}
\nc{\EKmil}{\EKmi^{\mathrm{low}}}
\nc{\EKmiuz}{\EKmiz^{\mathrm{up}}}
\nc{\EKmilz}{\EKmiz^{\mathrm{low}}}
\nc{\EKup}{\ttt\mathbf{V}_{\Ql}^{\mathrm{up}}}
\nc{\EKlow}{\ttt\mathbf{V}_{\Ql}^{\mathrm{low}}}
\nc{\oklrn}[1]{{}^\theta \mathcal{R}_{#1}(\lambda)}
\nc{\Fn}{F_i^{(n)}}
\nc{\En}{E_i^{(n)}}
\nc{\Fns}{(F_i^*)^{(n)}}
\nc{\Ens}{(E_i^*)^{(n)}}
\nc{\ff}{\mathbf{f}}
\nc{\ffd}{\mathbf{f}^*}
\nc{\fint}{\mathbf{f}_\Ql}
\nc{\fintd}{\mathbf{f}^*_\Ql}
\nc{\FF}{\mathcal{F}}
\nc{\tFF}{{}^\theta\FF(\lambda)}
\nc{\tFFlow}{{}^\theta\FF(\lambda)_{\Ql}^{\mathrm{low}}}
\nc{\tFFup}{{}^\theta\FF(\lambda)_{\Ql}^{\mathrm{up}}}
\nc{\tFFz}{{}^\theta\FF}
\nc{\FFd}{\mathcal{F}^*}
\nc{\tFFd}{{}^\theta\FFd}
\nc{\Qplus}{\mathsf{Q}_+}
\nc{\Qmin}{\mathsf{Q}_-}
\nc{\tQplus}{\Qplus^\theta} 
\nc{\ttt}{{}^\theta}
\nc{\tchq}{\ttt\operatorname{ch}_q}
\nc{\chq}{\operatorname{ch}_q}
\nc{\ddelta}{\bar{\Delta}}
\nc{\tddelta}{\ttt\ddelta}
\nc{\nnabla}{\bar{\nabla}}
\nc{\tnnabla}{\ttt\nnabla}
\nc{\lyn}{\mathcal{L}_+}
\nc{\tlyn}{\ttt\mathcal{L}_+}
\nc{\front}{\operatorname{front}}
\nc{\back}{\operatorname{back}} 
\nc{\coset}[2]{\mathtt{D}_{#1,#2}}
\nc{\minlet}{\operatorname{minlet}}
\nc{\tcan}[1]{\ttt\mathbf{b}_{#1}}
\nc{\tdcan}[1]{\ttt\mathbf{b}^*_{#1}}
\nc{\tpbw}[1]{\ttt \mathbf{P}_{#1}}
\nc{\tdpbw}[1]{\ttt \mathbf{P}_{#1}^*}
\nc{\proot}{\Phi^+}
\nc{\zodd}{\Z_{\text{odd}}} 
\nc{\nodd}{\N_{\text{odd}}} 
\DeclarePairedDelimiter\norm{\lvert}{\rvert} 
\DeclarePairedDelimiter\tnorm{\ttt\lvert}{\rvert} 
\DeclarePairedDelimiter\Norm{\lVert}{\rVert} 
 
\nc{\zerw}{\ttt\varnothing}
\nc{\Ei}{\mathbf{E}_i}
\nc{\ei}{\mathbf{e}'_i}
\nc{\eis}{\mathbf{e}^*_i} 
\nc{\Lyn}{\mathcal{L}_+}
\nc{\tLyn}{\ttt\mathcal{L}_+}
\nc{\plyn}[1]{\nu^{\langle #1 \rangle}}
\nc{\pplyn}[2]{#1^{\langle #2 \rangle}}
\nc{\tkap}{\ttt\kappa_\nu}
\nc{\EKM}{\ttt\mathbf{V}} 
\nc{\tkpf}{\ttt\operatorname{kpf}} 

%fields
\nc{\cor}{\Bbbk}
\nc{\corr}{\mathbf{k}}

%numbers

\newcommand{\Q}{\mathbb {Q}}
\newcommand{\Z}{{\mathbb Z}}
\newcommand{\N}{{\mathbb N}}

%Lie algebras
\newcommand{\g}{{\mathfrak{g}}}
\nc{\Ad}{\operatorname{Ad}}

%groups
\nc{\sym}{\mathfrak{S}} 
\nc{\weyl}{\mathfrak{W}}
\nc{\tcoset}[2]{\ttt\mathtt{D}_{#1,#2}} %restructure this - t-version 
\nc{\ttau}{{}^\theta\tau}

%homological algebra

\newcommand{\Hom}{\operatorname{HOM}}
\newcommand{\hhom}{\operatorname{Hom}}
\newcommand{\End}{\operatorname{End}}
\nc{\Aut}{\operatorname{Aut}}
\newcommand{\Ext}{\operatorname{Ext}}

\nc{\coker}{\operatorname{coker}}

\nc{\Img}{\on{Im}}

%modules
\nc{\modv}[1]{{#1}\operatorname{-mod}}
\nc{\gmodv}[1]{{#1}\operatorname{-Mod}}
\nc{\Modv}[1]{{#1}\operatorname{-Mod}}
\nc{\gModv}[1]{{#1}\operatorname{-gMod}}
\nc{\pmodv}[1]{{#1}\operatorname{-pMod}}
\nc{\fmodv}[1]{{#1}\operatorname{-fMod}}

\nc{\Ind}{\on{Ind}}
\nc{\Coind}{\on{Coind}}
\nc{\Res}{\on{Res}}
\nc{\res}{\on{res}}

\nc{\triv}{\mathds{1}}
\nc{\ttriv}{{}^\theta\mathds{1}}
\nc{\tind}[1]{\ttriv \varolessthan {#1}}

%categories
\nc{\Ob}{\on{Ob}} 

%klr algebras
\nc{\klr}{\mathcal{R}( \beta )}
\nc{\oklr}{{}^\theta \mathcal{R}( \beta; \lambda )}
\nc{\oaklr}{{}^\theta \mathcal{R}( \beta )}
\nc{\klrv}[1]{\mathcal{R}({#1})}
\nc{\klrvv}[2]{\mathcal{R}({#1},{#2})}
\nc{\oklrv}[1]{{}^\theta \mathcal{R}({#1};\lambda)}
\nc{\oklrvv}[2]{{}^\theta \mathcal{R}({#1},{#2};\lambda)} 
\nc{\klrz}[1]{\mathcal{R}_z({#1})}
\nc{\oklrz}[1]{{}^\theta \mathcal{R}_z({#1};\lambda)}

\nc{\comp}{{}^\theta J^\beta}

%Hecke
\nc{\HecB}{\mathcal{H}_{\mathsf{B}_n}(p)}
\nc{\HecBf}{\mathcal{H}^{f}_{\mathsf{B}_n}(p)}

%polynomial rings 
\nc{\bP}{{\mathbb{P}}}
\nc{\bPh}{\widehat{\mathbb{P}}}
\nc{\Oh}{\widehat{\mathcal{O}}} 
\nc{\bK}[1][{n}]{\widehat{\mathbb{K}}_{#1}}

\nc{\tOh}{{}^\theta\widehat{\mathcal{O}}}
\nc{\tOhb}{{}^\theta\widehat{\mathcal{O}}_{\beta}}
\nc{\tbP}{{}^\theta{\mathbb{P}}}
\nc{\tbPh}{{}^\theta\widehat{\mathbb{P}}}
\nc{\tbK}[1][{n}]{{}^\theta\widehat{\mathbb{K}}_{#1}}
\nc{\Kcompl}{{}^\theta\widehat{\mathcal{K}}_\beta} 

%bimodule 
\nc{\hV}{\widehat{V}}
\nc{\bV}[1][{n}]{\widehat{V}^{\otimes{#1}}}
\nc{\bVK}[1][{n}]{\widehat{V}^{\otimes{#1}}_{\widehat{\mathbb{K}}}}

\nc{\tbV}[1][{n}]{{}^\theta\widehat{V}^{\otimes{#1}}}
\nc{\tbVK}[1][{n}]{{}^\theta\widehat{V}^{\otimes{#1}}_{\widehat{\mathcal{K}}}}
\nc{\thV}{{}^\theta\widehat{V}}

%skeq group rings
\nc{\skewr}{{}^\theta\widehat{\mathcal{K}}_\beta \rtimes \cor [ \weyl_n ] }
\nc{\skewrr}{\tbK[\beta] \rtimes \cor [ \weyl_n ] }
\nc{\skewrrr}{\cor [ \weyl_n ] \ltimes \tbK[\beta] }

%quantum groups 
\nc{\Ulg}{U_q(L\mathfrak{g})}
\nc{\Ugl}{U_q(L\mathfrak{gl}_n)}
\nc{\Usl}{U_q(L\mathfrak{sl}_n)}

\nc{\aff}{{\rm{aff}}}

%R/K matrices
\nc{\Kmv}[1]{\mathbf{K}_{{#1}}}
\nc{\Rmv}[1]{\mathbf{R}_{{#1}}}
\nc{\Km}{\mathbf{K}}
\nc{\Rm}{\mathbf{R}}

%functors
\nc{\tfun}{{}^\theta\mathsf{F}} 
\nc{\fun}{\mathsf{F}} 

%brackets
\nc{\bl}{\bigl(}
\nc{\br}{\bigr)}
\nc{\lan}{\langle}
\nc{\ran}{\rangle}

%misc
\newcommand{\seteq}{\mathbin{:=}}
\nc{\supp}{\on{supp}}
\newcommand{\soplus}{\mathop{\mbox{\normalsize$\bigoplus$}}\limits}
\newcommand{\id}{\on{id}}
\nc{\ord}{\operatorname{ord}}
\nc{\Id}{\operatorname{Id}}
\nc{\gr}{\operatorname{gr}}

%double arrow
\def\vecsign{\mathchar"017E}
\def\dvecsign{\smash{\stackon[-2.335pt]{\vecsign}{\rotatebox{180}{$\vecsign$}}}}
\def\dvec#1{\def\useanchorwidth{T}\stackon[-4.5pt]{#1}{\,\dvecsign}}
\usepackage{stackengine}
\stackMath

%arrows

\nc{\epito}{\twoheadrightarrow}

%\nc{\longtwoheadrightarrow}[1][]{\xymatrix{\ar@{->>}[r]^-{{#1}}&}}
%\nc{\epiTo}[1][]{\longtwoheadrightarrow[{#1}]}
%\nc{\monoTo}[1][]{\xymatrix{\ar@{>->}[r]^-{{#1}}&}}

\newlength{\mylength}
\setlength{\mylength}{\textwidth}
\addtolength{\mylength}{-20ex}

\DeclareRobustCommand{\SkipTocEntry}[5]{}
%\include{auxx/macro-basic}
%\include{auxx/macro-specific}

%%%%%%%%%%%
%\usepackage[backend=bibtex,firstinits=true]{biblatex}

\AtBeginDocument{%
   \def\MR#1{}
}

\title[Orientifold KLR algebras and Enomoto--Kashiwara algebra]
{Representations of orientifold Khovanov--Lauda--Rouquier algebras and the Enomoto--Kashiwara algebra}

\author[T. Prze\'{z}dziecki]{Tomasz Prze\'{z}dziecki}
\address{School of Mathematics, University of Edinburgh, Peter Guthrie Tait Rd, Edinburgh, EH9 3FD, United Kingdom}
\email{\href{mailto:tprzezdz@exseed.ed.ac.uk}{tprzezdz@exseed.ed.ac.uk}}

%\address{}

\keywords{Khovanov-Lauda-Rouquier algebra, Enomoto-Kashiwara algebra, canonical basis, Lyndon words}
%Quantum group, R-matrix, Schur-Weyl duality}

\subjclass[2020]{81R50, 17B37, 20C08, 18N25}

\begin{document}

\begin{abstract}
We consider an ``orientifold" generalization of Khovanov--Lauda--Rou\-quier algebras, depending on a quiver with an involution and a framing. Their representation theory is related, via a Schur--Weyl duality type functor, to Kac--Moody quantum symmetric pairs, and, via a categorification theorem, to highest weight modules over an algebra introduced by Enomoto and Kashiwara. Our first main result is a new shuffle realization of these highest weight modules and a combinatorial construction of their PBW and canonical bases in terms of Lyndon words. Our second main result is a classification of irreducible representations of orientifold KLR algebras and a computation of their global dimension in the case when the framing is trivial. %unframed case 
\end{abstract}

\maketitle

\setcounter{tocdepth}{1}
\tableofcontents

\section{Introduction}

\addtocontents{toc}{\SkipTocEntry}

Khovanov--Lauda--Rouquier (KLR) algebras were introduced in \cite{Khovanov-Lauda-1, Rouquier-2KM} in the context of  categorification of quantum groups. 
They have since played an increasingly important role in representation theory. 
Broadly speaking, KLR algebras can be regarded, via the Brundan--Kleshchev--Rouquier isomorphism \cite{Brundan-Kleshchev, Rouquier-2KM}, as a generalization of the affine Hecke algebra $\widehat{\mathcal{H}}(\mathtt{A}_m)$ of type $\mathtt{A}$. 
This generalization is twofold. Firstly, KLR algebras naturally possess a non-trivial grading, which is difficult to discern in  the affine Hecke algebra. Secondly, KLR algebras constitute the correct replacement for $\widehat{\mathcal{H}}(\mathtt{A}_m)$  from the point of view of Schur--Weyl duality. Indeed, Kang, Kashiwara and Kim \cite{kang-kashiwara-kim-18} have constructed functors relating categories of modules over KLR algebras and quantum affine algebras of any type, generalizing the relationship between $\widehat{\mathcal{H}}(\mathtt{A}_m)$ and $U_q(\widehat{\mathfrak{sl}}_n)$ established earlier by Chari and Pressley \cite{Chari-Pressley-96}. 

It is natural to ask whether it is possible to construct a KLR-type generalization of affine Hecke algebras of other classical types. A positive answer to this question was given by Vasserot and Varagnolo \cite{VV-HecB} as well as Poulain d'Andecy and Walker \cite{PAW-B}. We will refer to the new graded algebras introduced there as \emph{orientifold KLR algebras} (see Remark \ref{rem: orientifold terminology} for an explanation of the origin of this name). It must be stressed that orientifold KLR algebras are very different from the usual KLR algebras associated to Cartan data of other classical types. From the point of view of categorification, their representation theory is related to an algebra introduced by Enomoto and Kashiwara \cite{Enomoto-Kashiwara-06}, depending on a Dynkin diagram together with an involution. 
More precisely, it was shown in \cite{VV-HecB} that orientifold KLR algebras categorify irreducible highest weight modules $\ttt\mathbf{V}(\lambda)$ over the Enomoto--Kashiwara algebra. In analogy to $U_q(\mathfrak{n}_-)$, these modules also admit a geometric construction in terms of perverse sheaves on the stack of orthogonal representations of a quiver with a contravariant involution \cite{Enomoto}, as well as a Ringel--Hall--type construction \cite{Young-Hall}. 

Our main motivation for studying orientifold KLR algebras is related to Schur--Weyl duality. In \cite{Appel-Przezdziecki-21}, we construct functors between categories of modules over orientifold KLR algebras, and coideal subalgebras $\mathcal{B}_{\mathbf{c},\mathbf{s}}$ of quantum affine algebras $U_q(\widehat{\mathfrak{g}})$ (see \cite{kolb-14}), respectively. The parameter $\lambda$ is related to the parameters ${\mathbf{c},\mathbf{s}}$ via an additional datum in the definition of an orientifold KLR algebra, given by a framing dimension vector. Our intention is to use these functors to develop the graded representation theory of Kac--Moody quantum symmetric pairs. The study of finite-dimensional representations of orientifold KLR algebras is the first step in this programme. 

Let us describe our results in more detail. In \S \ref{sec: oKLR sec2} we introduce a somewhat more general definition of orientifold KLR algebras (Definition \ref{def: oklr}) associated to hermitian matrices with an additional symmetry. We construct a faithful polynomial representation (Proposition \ref{pro: polrep oklr}) and prove a PBW theorem (Proposition \ref{thm: oklr PBW}). \S \ref{sec: EK} is dedicated to the Enomoto--Kashiwara algebra. Inspired by the work of Leclerc \cite{Leclerc-04} and Kleshchev and Ram \cite{Kleshchev-Ram-11}, we construct a shuffle realization of the modules $\ttt\mathbf{V}(\lambda)$ (Definition \ref{def: q shuffle module} and Proposition \ref{pro:shufflemodulereal}). This allows us to apply the combinatorics of Lyndon words to obtain PBW and canonical bases for these modules, in the case $\lambda = 0$ (Theorem \ref{thm: can vs pbw}, Corollary \ref{cor: max of dual can}), somewhat simplifying the original construction of these bases from \cite{Enomoto-Kashiwara-08}. In \S \ref{sec: rep th} we apply these results to the representation theory of orientifold KLR algebras. A key ingredient is Varagnolo and Vasserot's \cite{VV-HecB} categorification theorem, identifying $\ttt\mathbf{V}(\lambda)$ with the Grothendieck group of the category of finite-dimensional representations of orientifold KLR algebras. In our main result (Theorem \ref{thm: main result}), we classify irreducible representations of orientifold KLR algebras in terms of $\theta$-good Lyndon words, and construct them as heads (resp.\ socles) of certain induced (resp.\ coinduced) modules. As an application, we prove that orientifold KLR algebras have finite global dimension when $\lambda = 0$. 

\section*{Future work} 
The present paper lays the foundations for a broader programme connecting the representation theory of quantum symmetric pairs with orientifold KLR algebras via generalized Schur--Weyl duality functors. In \cite{Appel-Przezdziecki-21}, the results of the present paper, together with a number of new techniques, including k-matrices for KLR algebras and localization for module categories, are used to construct Hernandez--Leclerc--type categories (cf.\ \cite{Her-Lec-10, Her-Lec-15}) for coideal subalgebras $\mathcal{B}_{\mathbf{c},\mathbf{s}}$ in affine type $\mathtt{A.III}$ with generic parameters $\mathbf{c},\mathbf{s}$. 

\addtocontents{toc}{\SkipTocEntry}

In future work, we would like to generalize these results to non-generic parameters and coideals of type $\mathtt{D.IV}$. This will, in turn, require the development of the representation theory of orientifold KLR algebras associated to non-trivial framings $\lambda$ and quivers of affine type $\mathtt{D}$. To achieve this, we will combine the combinatorial techniques from the present paper with an in-depth study of the geometry of framed symplectic and orthogonal quiver representations. 

We expect that further study of orientifold KLR algebras with non-trivial framings will also provide new information about the representation theory of (affine) Hecke algebras of types $\mathtt{B}$ and $\mathtt{C}$ with unequal parameters, including the so-called non-asymptotic case, which is still only partially understood.  

In yet another direction, the connection to Hernandez--Leclerc categories suggests that the combinatorics of the dual canonical bases of the modules $\ttt\mathbf{V}(\lambda)$ should have an interesting interpretation in terms of cluster theory. 

%K-mat 
%generic, type D, A3
%new tech: K-mat for KLR, loc for module categories
%cluster 

\section*{Acknowledgements} 
I would like to thank the University of Edinburgh for excellent working conditions. I am also grateful to A.\ Appel, G.\ Bellamy, P.\ McNamara, V.\ Miemietz, L.\ Poulain d'Andecy, S.\ Rostam, C.\ Stroppel and M.\ Young for discussions and correspondence related to this work. 
Finally, I would like to thank an anonymous referee for a careful reading of the manuscript and many useful suggestions. 
During the writing of this paper I was supported by the starter grant ``Categorified Donaldson-Thomas Theory" No.\ 759967 of the European Research Council. 

\section{Orientifold KLR algebras} \label{sec: oKLR sec2}

\subsection{Some combinatorics} 

Let $\cor$ be a field. 
Let $\sym_n = \langle s_1, \cdots, s_{n-1} \rangle$ denote the symmetric group on $n$ letters, and $\weyl_n = \langle s_0, s_1, \cdots, s_{n-1} \rangle$ the Weyl group of type $\mathtt{B}_{n}$, i.e., $(\Z/2\Z)^n\rtimes\sym_n$. We regard them as Coxeter groups in the usual way. Given $0\leq m \leq n$, let $\coset{m}{n-m}$ (resp.\ $\tcoset{m}{n-m}$) denote the set of shortest left coset representatives with respect to the parabolic subgroup $\sym_m \times \sym_{n-m} \subset \sym_n$ (resp.\ $\weyl_m \times \sym_{n-m} \subset \weyl_n$). 
Let $w_0 \in \sym_n$ (resp.\ $\ttt w_0 \in \weyl_n$) be the longest element, and let ${}^\theta w \in \weyl_{n}$ be the longest element in $\tcoset{0}{n}$, i.e., the signed permutation 
\[
{}^\theta w(l) = -(n-l+1). 
\]

Let $J$ be a set and $\theta \colon J \to J$ an involution. We denote by $J^\theta$ the subset of fixed points of $\theta$. 
Let $\N[J]$ be the commutative semigroup freely generated by $J$. We call elements of $\N[J]$ \emph{dimension vectors}. Given a dimension vector $\beta = \sum_{i \in J} \beta(i) \cdot i$, we set $\Norm{\beta} = \sum_{i \in J} \beta(i)$ and $\supp (\beta) = \{ i \in J \mid \beta(i) \neq 0\}$. 
We call a sequence $\nu = \nu_1 \cdots \nu_n \in J^{n}$ a \emph{composition} of $\beta$ of length $\ell(\nu) = n$ if $\norm{\nu} = \sum_{k=1}^n \nu_k = \beta$. We also set $\Norm{\nu}=n$. Let $J^\beta$ denote the set of all compositions of~$\beta$. There is a left action 
of $\mathfrak{S}_n$ on $J^n$ by permutations
\begin{equation} \label{eq: sym act seq}
	s_k \cdot \nu_1 \cdots \nu_n = \nu_1 \cdots \nu_{k+1} \nu_k \cdots \nu_n \quad (1 \leq k \leq n-1), 
\end{equation} 
whose orbits are the sets $J^\beta$ for all $\beta$ with $\Norm{\beta} = n$. 

Let $\wor = \bigcup_{\beta \in \N[J]} J^\beta$ be the set of compositions of all dimension vectors. We also refer to elements of $\wor$ as \emph{words} in $J$ and denote the empty word by $\varnothing$. 
We consider $\wor$ as a monoid with respect to concatenation: $\nu \mu = \nu_1\cdots \nu_{\ell{\nu}}\mu_1\cdots \mu_{\ell{\mu}}$, with $\varnothing$ as the identity.

The involution $\theta$ induces an involution $\theta \colon \N[J] \to \N[J]$. We call dimension vectors in $\N[J]^\theta$ \emph{self-dual}. 
We will always assume, for any $\beta \in \N[J]^\theta$, that if $i \in J^\theta$ then $\beta(i)$ is even. 
Set $\Norm{\beta}_{\theta} = \Norm{\beta}/2$ and  
\[  {}^\theta(-) \colon \N[J] \to \N[J]^\theta, \quad \beta \mapsto {}^\theta \beta = \beta + \theta(\beta).\] 
We call a sequence $\nu = \nu_1 \cdots \nu_n \in J^n$ a \emph{isotropic composition} of $\beta$ if $\ttt\norm{\nu}=\sum_{k=1}^n {}^\theta\nu_i = \beta$. We abbreviate $\nu_{-k} = \theta(\nu_k)$. Let ${}^\theta J^\beta$ denote the set of all isotropic compositions of $\beta$. 
There is a left action 
of $\weyl_n$ on $J^n$ extending \eqref{eq: sym act seq}, given by 
\[ s_0 \cdot \nu_1 \cdots \nu_n = \theta(\nu_1) \nu_2 \cdots \nu_n,\]
whose orbits are the sets ${}^\theta J^\beta$ for all self-dual $\beta$ with $\Norm{\beta}_\theta = n$. 
Let $\twor = \bigcup_{\beta \in \N[J]^\theta} \comp$ be the set of all isotropic compositions of all self-dual dimension vectors. The identity map defines a bijection $\wor \cong \twor$. 

We will consider algebras depending on matrices and vectors with polynomial entries. Below we introduce some terminology for the latter. 

\begin{definition} 
	We call a matrix $Q = (Q_{ij})_{i,j \in J}$ with entries in $\cor[u,v]$ a \emph{coefficient matrix}. We say that $Q$ is: 
	\begin{enumerate}
		\item[(M1)] \emph{regular} if $Q_{ii}= 0$ for all $i \in J$, 
		\item[(M2)] $\theta$\emph{-symmetric} if $Q_{ij}(u,v) = Q_{\theta(j)\theta(i)}(-v,-u)$ for all $i,j \in J$, 
		\item[(M3)] \emph{non-vanishing} if $Q_{ij} \neq 0$ for all $i \neq j \in J$,
		\item[(M4)] \emph{hermitian} if $Q_{ij}(u,v) = Q_{ji}(v,u)$ for each $i,j \in J$. 
	\end{enumerate} 
	Moreover, we call a vector $Q' = (Q_i)_{i \in J}$ with entries in $\cor[u]$ a \emph{coefficient vector}. We say that $Q'$  is: 
	\begin{enumerate}
		\item[(V1)] \emph{regular} if $Q_i = 0$ for all $i \in J^\theta$, 
		\item[(V2)] \emph{non-vanishing} if $Q_i \neq 0$ for all $i \notin J^\theta$,
		\item[(V3)] \emph{self-conjugate} if $Q_{i}(u) = Q_{\theta(i)}(-u)$. 
	\end{enumerate} 
	If a coefficient matrix satisfies (M1)-(M4), resp.\ if a coefficient vector satisfies (V1)-(V3), we call it \emph{perfect}. 
\end{definition}

\subsection{Reminder on KLR algebras} 

Let $\beta \in \mathbb{N}[J]$ with $\Norm{\beta} = n$, and let $Q$ be a regular coefficient matrix. 

\begin{definition}
	The \emph{KLR algebra} $\mathcal{R}(\beta)$ associated to $(J, Q, \beta)$ is the unital $\cor$-algebra generated by 
	$e(\nu)$ $(\nu \in J^\beta)$, $x_l$ $(1 \leq l \leq n)$ and $\tau_k$ $(1 \leq k \leq n-1)$ subject to the following relations: 
	\begin{itemize}
		\item idempotent relations: 
		\[ 
		e(\nu)e(\nu') = \delta_{\nu,\nu'}e(\nu), \quad 
		x_l e(\nu) = e(\nu) x_l, \quad \tau_k e(\nu) = e(s_k \cdot \nu) \tau_k, 
		\]
		\item polynomial relations: 
		\[
		x_lx_{l'} = x_{l'}x_l, 
		\] 
		\item quadratic relations: 
		\[
		\tau_k^2e(\nu) = Q_{\nu_k,\nu_{k+1}}(x_{k+1},x_k)e(\nu), 
		\]
		\item deformed braid relations: 
		\[
		\tau_k\tau_{k'} = \tau_{k'}\tau_k \ \ \mbox{if} \ \ k \neq k' \pm 1, 
		\]
		\begin{align*}
			(\tau_{k+1}\tau_k\tau_{k+1} - \tau_k\tau_{k+1}\tau_k)e(\nu) =& \ \delta_{\nu_k,\nu_{k+2}} \frac{Q_{\nu_k,\nu_{k+1}}(x_{k+1},x_k) - Q_{\nu_k,\nu_{k+1}}(x_{k+1},x_{k+2})}{x_k - x_{k+2}}e(\nu), 
		\end{align*} 
		\item mixed relations: 
		\begin{align*}
			(\tau_kx_l - x_{s_k(l)}\tau_k)e(\nu) =& \ 
			\left\lbrace
			\begin{array}{ll}
				- e(\nu) & \textrm{ if } \ l=k,\ \nu_k=\nu_{k+1}, \\[0.3em]
				e(\nu) & \textrm{ if } \ l=k+1,\ \nu_k=\nu_{k+1}, \\[0.3em]
				0 & \textrm{ else. } 
			\end{array} 
			\right. 
		\end{align*}
	\end{itemize} 
\end{definition}

Whenever we want to emphasize the dependence of the KLR algebra on the full datum $(J, Q, \beta)$, we will write $\mathcal{R}(J, Q, \beta)$.

\begin{lemma} \label{lem: klr isos for twist}
If the coefficient matrix $Q$ is hermitian, then there is an algebra isomorphism $\mathcal{R}(\beta) \to \mathcal{R}(\beta)$ 
	sending 
	\begin{equation} \label{inv on klr}
		e(\nu) \mapsto e(w_0(\nu)), \quad x_l \mapsto x_{n-l+1}, \quad \tau_k \mapsto -\tau_{n-k}. 
	\end{equation}  
If the coefficient matrix $Q$ is hermitian and $\theta$-symmetric, then there is an algebra isomorphism $\mathcal{R}(\beta) \to \mathcal{R}(\theta(\beta))$ 
	sending 
	\begin{equation} \label{inv on oklr}
		e(\nu) \mapsto e({}^\theta w(\nu)), \quad x_l \mapsto - x_{n-l+1}, \quad \tau_k \mapsto -\tau_{n-k}. 
	\end{equation}  
\end{lemma}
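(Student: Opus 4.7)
My plan is to verify that each proposed assignment of generators extends to a well-defined algebra homomorphism by checking that the images satisfy the defining relations of the source KLR algebra. Since both assignments are manifestly involutive — the first satisfies $\phi^2 = \id_{\mathcal{R}(\beta)}$, and the second composed with its analogue $\mathcal{R}(\theta(\beta)) \to \mathcal{R}(\beta)$ yields the identity, because $w_0$ and ${}^\theta w$ are involutions in $\sym_n$ and $\weyl_n$ respectively — invertibility will then follow at once, so these are indeed isomorphisms.

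The idempotent, commuting polynomial, and distant commuting braid relations $\tau_k \tau_{k'} = \tau_{k'} \tau_k$ (for $|k - k'| \geq 2$) transport without incident, since sign factors cancel and indices are merely permuted. For the quadratic relation $\tau_k^2 e(\nu) = Q_{\nu_k, \nu_{k+1}}(x_{k+1}, x_k) e(\nu)$: applying the first map and expanding $\tau_{n-k}^2 e(w_0 \nu)$ via the quadratic relation in the target yields $Q_{\nu_{k+1}, \nu_k}(x_{n-k+1}, x_{n-k}) e(w_0 \nu)$, and hermiticity (M4) immediately identifies this with the image of the right-hand side. For the second map the computation is parallel, now using $({}^\theta w(\nu))_{n-k} = \theta(\nu_{k+1})$, and the sign flip $x_l \mapsto -x_{n-l+1}$ is absorbed by $\theta$-symmetry (M2) in the form $Q_{\theta(\nu_{k+1}), \theta(\nu_k)}(u, v) = Q_{\nu_k, \nu_{k+1}}(-v, -u)$.

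The deformed braid relation is the most intricate. After pushing through the map, the cubic sign factor $(-1)^3 = -1$ reverses the commutator $\tau_{j+1} \tau_j \tau_{j+1} - \tau_j \tau_{j+1} \tau_j$ in the target (with $j = n - k - 1$); this reversal is balanced against the image of the right-hand side by means of the elementary identity $(A - B)/(C - D) = (B - A)/(D - C)$. Inside the difference quotient one must rewrite the $Q$-term so that its indices match the permuted word: one uses the Kronecker factor $\delta_{\nu_k, \nu_{k+2}}$ to identify the first indices, hermiticity to swap the two $Q$-arguments, and (for the second map) $\theta$-symmetry to convert the sign-flipped variables. The mixed relation is then checked by tracking how the cases $l = k$ and $l = k+1$ interchange under $l \mapsto n - l + 1$, together with the preservation of the condition $\nu_k = \nu_{k+1}$ by both $w_0$ and ${}^\theta w$ (the latter because $\theta$ is an involution on $J$).

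The main obstacle I anticipate is the sign bookkeeping in the deformed braid and mixed relations for the second map, where the substitution $x_l \mapsto -x_{n-l+1}$ must be reconciled in tandem with the $\theta$-symmetry identity $Q_{ij}(u, v) = Q_{\theta(j)\theta(i)}(-v, -u)$; once these conventions are fixed, the remaining verifications reduce to routine pattern-matching against the KLR defining relations in the target algebra.
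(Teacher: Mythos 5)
Your overall strategy — directly verifying that each defining relation survives the generator assignment — matches the paper's own (unwritten) direct check, and your handling of the quadratic relations is sound: hermiticity (M4) settles the first map, while $\theta$-symmetry (M2) settles the second.

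However, there is a genuine sign failure in the second map's mixed and deformed braid relations which your promised ``routine pattern-matching'' would have exposed. Take the mixed relation with $l=k$ and $\nu_k=\nu_{k+1}$: the image of $\tau_k x_k - x_{k+1}\tau_k$ under \eqref{inv on oklr} is $(-\tau_{n-k})(-x_{n-k+1})-(-x_{n-k})(-\tau_{n-k}) = +\bigl(\tau_{n-k}x_{n-k+1}-x_{n-k}\tau_{n-k}\bigr)$, and since $({}^\theta w\,\nu)_{n-k}=\theta(\nu_{k+1})=\theta(\nu_k)=({}^\theta w\,\nu)_{n-k+1}$, the target relation evaluates this as $+e({}^\theta w\,\nu)$; but the image of the right-hand side $-e(\nu)$ is $-e({}^\theta w\,\nu)$. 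The two sign flips (on $x$ and on $\tau$) compound rather than merely ``interchange the cases $l=k$ and $l=k+1$'': both $x_k$ and $x_{k+1}$ pick up a sign, so the extra $(-1)$ you attribute to $x_l\mapsto -x_{n-l+1}$ does not disappear. The same compounded $(-1)$ reappears in the deformed braid relation: carrying a formal sign $\tau_k\mapsto\epsilon\tau_{n-k}$ through, the image of the left side is $-\epsilon(\tau_{j+1}\tau_j\tau_{j+1}-\tau_j\tau_{j+1}\tau_j)e({}^\theta w\,\nu)$ with $j=n-k-1$, and after applying (M2), (M4), and the denominator flip $x_k - x_{k+2}\mapsto -(x_{n-k+1}-x_{n-k-1})$, both relations force $\epsilon=+1$, not $-1$. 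In other words, the assignment in \eqref{inv on oklr} as printed is not a homomorphism; the correct map has $\tau_k\mapsto +\tau_{n-k}$, which one sees cleanly by factoring it as \eqref{inv on klr} followed by $e(\nu)\mapsto e(\theta(\nu_1)\cdots\theta(\nu_n))$, $x_l\mapsto -x_l$, $\tau_k\mapsto -\tau_k$ (the $\mathcal{R}(\beta)$-analogue of \eqref{inv on oklr2}), whose two $-1$'s on $\tau$ cancel. Your outline defers this sign bookkeeping but asserts that it reconciles, when in fact it does not for the sign given.
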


\begin{proof}
The first statement can be found in, e.g., \cite[\S 3.2.1]{Rouquier-2KM}. 
The second statement follows from a direct check of the relations using $\theta$-symmetry. 
\end{proof} 

If $M$ is an $\mathcal{R}(\beta)$-module, we will denote by $M^\dagger$ 
the corresponding $\mathcal{R}(\theta(\beta))$-module 
with the action twisted by the inverse of the isomorphism \eqref{inv on oklr}.

\subsection{Orientifold KLR algebras}

Let $\beta \in \N[J]^\theta$ with $\Norm{\beta}_{\theta} = n$, let $Q$ be a regular $\theta$-symmetric coefficient matrix 
and $Q'$ a regular coefficient vector. 

\begin{definition} \label{def: oklr}
	We define the \emph{orientifold KLR algebra} $\oaklr$ associated to $(J, \theta, Q, Q', \beta)$ to be the unital $\cor$-algebra generated by $e(\nu)$ $(\nu \in \comp)$, $x_l$ $(1 \leq l \leq n)$, $\tau_0$ and $\tau_k$ $(1 \leq k \leq n-1)$ subject to the following relations: 
	\begin{itemize}
		\item idempotent relations: 
		\[ 
		e(\nu)e(\nu') = \delta_{\nu,\nu'}e(\nu), \quad 
		x_l e(\nu) = e(\nu) x_l, \quad \tau_k e(\nu) = e(s_k \cdot \nu) \tau_k, \quad \tau_0 e(\nu) = e(s_0 \cdot \nu) \tau_0, 
		\]
		\item polynomial relations: 
		\[
		x_lx_{l'} = x_{l'}x_l, 
		\] 
		\item quadratic relations: 
		\[
		\tau_k^2e(\nu) = Q_{\nu_k,\nu_{k+1}}(x_{k+1},x_k)e(\nu), \quad \tau_0^2 e(\nu) = Q_{\nu_1}(-x_1) e(\nu), 
		\]
		\item deformed braid relations: 
		\[
		\tau_k\tau_{k'} = \tau_{k'}\tau_k \ \ \mbox{if} \ \ k \neq k' \pm 1, \quad \quad \tau_0\tau_k = \tau_k \tau_0 \ \ \mbox{if} \ \ k \neq 1, 
		\]
		\[
			(\tau_{k+1}\tau_k\tau_{k+1} - \tau_k\tau_{k+1}\tau_k)e(\nu) = \delta_{\nu_k,\nu_{k+2}} \frac{Q_{\nu_k,\nu_{k+1}}(x_{k+1},x_k) - Q_{\nu_k,\nu_{k+1}}(x_{k+1},x_{k+2})}{x_k - x_{k+2}}e(\nu), \]
\begin{align*}
			&((\tau_1\tau_0)^2 - (\tau_0\tau_1)^2)e(\nu) = \\ \\ 
			&=\left\lbrace
			\begin{array}{ll} 
				\frac{Q_{\nu_2}(x_2) - Q_{\nu_1}(x_1)}{x_1+x_2}\tau_1e(\nu) & \textrm{ if } \ \nu_1\neq \nu_2, \ \nu_2 = \theta(\nu_1) \\ \\
				\frac{Q_{\nu_1,\nu_2}(x_2,-x_1) - Q_{\nu_1,\nu_2}(-x_2,-x_1)}{x_2} \tau_0 e(\nu) & \textrm{ if } \ \nu_1\neq\theta(\nu_1), \ \nu_2 = \theta(\nu_2), \\ \\ 
				\frac{Q_{\nu_1,\nu_2}(x_2,-x_1)-Q_{\nu_1,\nu_2}(x_2,x_1)}{x_1x_2} (x_1\tau_0+1)e(\nu) & \textrm{ if } \ \theta(\nu_1) = \nu_1 \neq \nu_2=\theta(\nu_2),  \\ \\
				0 & \textrm{ else,} 
			\end{array}\right. 
		\end{align*}
		\item mixed relations: 
		\begin{align*}
			(\tau_kx_l - x_{s_k(l)}\tau_k)e(\nu) =& \ 
			\left\lbrace
			\begin{array}{ll}
				- e(\nu) & \textrm{ if } \ l=k,\ \nu_k=\nu_{k+1}, \\[0.3em]
				e(\nu) & \textrm{ if } \ l=k+1,\ \nu_k=\nu_{k+1}, \\[0.3em]
				0 & \textrm{ else, } 
			\end{array} 
			\right. \\ \\
			(\tau_0x_1 + x_{1}\tau_0)e(\nu) =& \ 
			\left\lbrace
			\begin{array}{ll}
				0 & \textrm{ if } \ \nu_1 \neq \theta(\nu_1), \\[0.3em]
				-2 e(\nu) & \textrm{ if } \ \nu_1 = \theta(\nu_1).  
			\end{array} 
			\right. \\
			\tau_0x_l =& \ x_l \tau_0 \ \ \mbox{if} \ \ l \neq 1, 
		\end{align*} 
	\end{itemize}
\end{definition} 

By convention, we set ${}^\theta\mathcal{R}(0) = \cor$. Whenever we want to emphasize the dependence of the orientifold KLR algebra on the full datum $(J, \theta, Q, Q', \beta)$, we will write ${}^\theta\mathcal{R}(J, Q, Q', \beta)$. 

\begin{remark} \label{rem: orientifold terminology}
In the case when the matrices $Q,Q'$ arise from a quiver with a contravariant involution and a framing (cf. \S \ref{ss: quiver klr}), under the assumption that the involution has no fixed points, the algebra $\oaklr$ was introduced by Varagnolo and Vasserot \cite{VV-HecB}. The case of an involution with possible fixed points was first considered by Poulain d'Andecy and Walker in \cite{PAW-B}, and later by Poulain d'Andecy and Rostam in \cite{PAR}. The latter paper takes a somewhat similar approach to ours - the definition of the algebra depends on polynomials $Q_{ij}$, but they are less general than ours, and the polynomials $Q_i$ are absent. 

In the literature these algebras are typically referred to as ``generalizations of KLR algebras for types $\mathtt{BCD}$". However, we feel that this name may lead to confusion between, for example, the algebra $\oaklr$ and the KLR algebra $\klr$ associated to a quiver of type $\mathtt{D}$. To avoid this confusion, we propose to introduce the name ``orientifold KLR algebras'' for $\oaklr$. The motivation comes from the connection with orientifold Donaldson-Thomas theory, see \cite{Przez-coha, Young-coha}. 
\end{remark}

\begin{prop} 
	We list several isomorphisms between orientifold KLR algebras. 
	\begin{enumerate}
		\item If $Q$ is hermitian and $Q'$ self-conjugate, then there is an algebra automorphism 
		\eq \label{inv on oklr2} \oaklr \xrightarrow{\sim} \oaklr, \qquad e(\nu) \mapsto e(\ttt w_0(\nu)), \ x_l \mapsto -x_l, \ \tau_k \mapsto -\tau_k \eneq
		with $\nu \in \comp$, $1 \leq l \leq n$ and $0 \leq k \leq n - 1$. 
		\item If $Q$ is hermitian and $Q'$ self-conjugate, then there is an algebra isomorphism
		\begin{equation} \label{anti-invo} \omega \colon \oaklr \xrightarrow{\sim} \oaklr^{op}, \quad e(\nu) \mapsto e(\nu), \ x_le(\nu) \mapsto x_l e(\nu), \ \tau_k e(\nu) \mapsto \tau_k e(s_k\cdot\nu).\end{equation}
		\item Given $\{\zeta_i\}_{i\in J}$ in $\cor$ satisfying $\zeta_i = - \zeta_{\theta(i)}$, as well as $\{\eta_{ij}\}_{i,j\in J}$ and $\{\eta_i\}_{i \in J}$ in $\cor^{\times}$, satisfying: $\eta_{ij} = \eta_{\theta(j)\theta(i)}$ for all $i,j\in J$ and $\eta_i = \eta_{ii}$ for $i \in J^\theta$, 
		let $\hat{Q}_{ij}(u,v) = \eta_{ij}\eta_{ji}(\eta_{jj}u + \zeta_j, \eta_{ii}v + \zeta_i)$ and $\hat{Q}_i(u) = \eta_i \eta_{\theta(i)} Q_i(\eta_{ii}u - \zeta_i)$. Then there is an algebra isomorphism $  {}^\theta\mathcal{R}(J, \hat{Q}, \hat{Q}', \beta)\xrightarrow{\sim}{}^\theta\mathcal{R}(J, Q, Q', \beta)$ given by  
		\[ e(\nu) \mapsto e(\nu), \ x_l e(\nu) \mapsto \eta_{\nu_l,\nu_l}^{-1}(x_l - \zeta_{\nu_l})e(\nu), \ \tau_k e(\nu) \mapsto \eta_{\nu_k,\nu_{k+1}}\tau_k e(\nu), \ \tau_0 e(\nu) \mapsto \eta_{\nu_1} \tau_0 e(\nu). 
		\]
	\end{enumerate}
\end{prop}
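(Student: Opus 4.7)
The plan for all three parts is uniform: define the candidate map on generators as prescribed and then verify that every defining relation from Definition~\ref{def: oklr} holds in the target. Relations that involve only $e(\nu)$, $x_l$, and the generators $\tau_k$ with $k \geq 1$ are governed by the same mechanics as the ordinary KLR algebra $\klr$, so their verification reduces to (or is directly imported from) the proof of Lemma~\ref{lem: klr isos for twist}. The genuine work therefore lies in checking the $\tau_0$-quadratic relation, the mixed relations $\tau_0 x_1 + x_1 \tau_0$ and $\tau_0 x_l = x_l \tau_0$ for $l \neq 1$, and, above all, the four-case deformed braid relation $((\tau_1\tau_0)^2 - (\tau_0\tau_1)^2)e(\nu)$.

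For part (1), I would first note that $\ttt w_0 \in \weyl_n$ is the signed permutation $l \mapsto -l$, so its action on $\comp$ is $\nu_1 \cdots \nu_n \mapsto \theta(\nu_1) \cdots \theta(\nu_n)$; in particular it commutes with the $s_k$-action on sequences, which secures the idempotent relations. The sign changes cancel in $\tau_0^2$ and $\tau_k^2$, reducing the quadratic relations to the identities $Q_{\theta(\nu_1)}(-x_1) = Q_{\nu_1}(x_1)$ and $Q_{\theta(\nu_k),\theta(\nu_{k+1})}(x_{k+1}, x_k) = Q_{\nu_k,\nu_{k+1}}(-x_{k+1}, -x_k)$; the first is exactly self-conjugacy of $Q'$, the second is hermiticity combined with $\theta$-symmetry of $Q$. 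The mixed relation $\tau_0 x_1 + x_1 \tau_0$ survives since both $\tau_0$ and $x_1$ acquire a sign. Each of the four cases of the $\tau_0 \tau_1 \tau_0 \tau_1$ braid must be checked individually: upon substituting $\theta(\nu_j)$ for $\nu_j$ on the index side and negating the polynomial arguments, hermiticity and self-conjugacy align numerator and denominator of the rational factors with those of the original relation. That the map is an automorphism of order two is then immediate from the involutivity of $\ttt w_0$ and of the sign changes.

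For part (2), I view $\omega$ as the orientifold analogue of the standard anti-involution on $\klr$, so a relation $ab = c$ in $\oaklr$ must be transported to $\omega(b)\omega(a) = \omega(c)$ in $\oaklr^{op}$. Equivalently, $\omega$ preserves the usual relations provided every left-right ordered product is reversed. The braid relations on both sides of $((\tau_1\tau_0)^2 - (\tau_0\tau_1)^2)e(\nu)$ are naturally symmetric under reversal, and the remaining relations translate correctly once one observes that all polynomial factors in $Q_{ij}$ and $Q_i$ appearing in the right-hand sides are symmetric either under $Q_{ij}(u,v) \leftrightarrow Q_{ji}(v,u)$ (hermiticity) or $Q_i(u) \leftrightarrow Q_{\theta(i)}(-u)$ (self-conjugacy). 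This is expected to be the cleanest of the three verifications.

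For part (3), the scaling-and-shift map is designed so that the rescaled polynomials $\hat Q_{ij}$ and $\hat Q_i$ encode precisely the distortion of the relations under $x_l \mapsto \eta_{\nu_l,\nu_l}^{-1}(x_l - \zeta_{\nu_l})$ and $\tau_0 \mapsto \eta_{\nu_1} \tau_0$. The condition $\zeta_i = -\zeta_{\theta(i)}$ is forced by the fact that $\tau_0$ interchanges the roles of $\nu_1$ and $\theta(\nu_1)$ and that $x_1$ appears with sign in the $\tau_0$-quadratic relation; the symmetry $\eta_{ij} = \eta_{\theta(j)\theta(i)}$ matches the $\theta$-symmetry of $\hat Q$. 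The verification of idempotent, polynomial, mixed, and $\tau_k$-braid relations is a direct substitution, the shift contributions cancelling in the commutator $\tau_k x_l - x_{s_k(l)}\tau_k$. The main technical obstacle, as in (1), is the four-case $\tau_0\tau_1\tau_0\tau_1$ braid relation, where one has to carefully track the composite scalar $\eta_{\nu_1,\nu_2}\eta_{\nu_2,\nu_1}\eta_{\nu_1}\eta_{\nu_2}$ and check that the specialised polynomial ratios on the right-hand side transform into the analogous ratios for $\hat Q$; the imposed symmetries on the $\eta$'s and the defining formulas for $\hat Q_{ij}, \hat Q_i$ are precisely what make this identification work.
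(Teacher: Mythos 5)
Your proposal is correct and follows the same route as the paper, whose proof is simply a one‑line appeal to "direct computation from the defining relations"; you correctly identify that the work concentrates on the $\tau_0$‑quadratic, $\tau_0$‑mixed, and four‑case $\tau_0\tau_1\tau_0\tau_1$ relations, and you correctly pin down where hermiticity, self‑conjugacy of $Q'$, and (for part (3)) the constraints $\zeta_i=-\zeta_{\theta(i)}$, $\eta_{ij}=\eta_{\theta(j)\theta(i)}$, $\eta_i=\eta_{ii}$ are used. One small caveat: in part (1) the map sends $x_l\mapsto -x_l$ and $\tau_k\mapsto-\tau_k$ without reversing positions, so the verification of the $\tau_k$-only relations ($k\geq1$) is not literally imported from Lemma~\ref{lem: klr isos for twist} (whose maps use $x_{n-l+1}$ and $\tau_{n-k}$) but is an analogous computation using the same identity $Q_{\theta(i)\theta(j)}(u,v)=Q_{ij}(-u,-v)$ obtained from hermiticity plus $\theta$-symmetry.
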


\begin{proof}
	The proposition follows by a direct computation from the defining relations. 
\end{proof}

\subsection{Polynomial representation} 
Set 
\begin{alignat*}{6}
	\bP_{ \nu } :=& \ \cor [x_1, \ldots, x_n] e(\nu), &\quad \bPh_{ \nu } :=& \ \cor [[x_1, \ldots, x_n]] e(\nu), &\quad
	\widehat{\mathbb{K}}_\nu :=& \ \cor((x_1, \ldots, x_n)) e(\nu), \\
	\tbP_{ \beta } \ \seteq& \soplus_{\nu \in \comp} \bP_{ \nu }, &\quad
	\tbPh_{ \beta } \ \seteq& \soplus_{\nu \in \comp} \bPh_{ \nu }, &\quad
	{}^\theta\widehat{\mathbb{K}}_{ \beta } \ \seteq& \soplus_{\nu \in \comp} \widehat{\mathbb{K}}_{ \nu }. 
\end{alignat*}
We abbreviate $x_{-l} = - x_l$ for $1 \leq l \leq n$. 
The group $\weyl_n$ acts on $\cor((x_1,\hdots,x_n))$ from the left by $w\cdot x_l = x_{w(l)}$. This induces an action on ${}^\theta\widehat{\mathbb{K}}_{ \beta }$ according to the rule 
\begin{equation} \label{eq: weyl action klr} w \cdot f e(\nu) = w(f) e(w\cdot \nu), \end{equation} 
for $w \in \weyl_n$ and $f \in \cor((x_1,\hdots,x_n))$. 

Let $P = (P_{ij})_{i,j\in J}$ be a coefficient matrix satisfying (M1)-(M3) and $P'=(P_i)_{i \in J}$ a coefficient vector satisfying (V1)-(V2). Set 
\eq \label{eq: P->Q} Q_{ij}(u,v) = P_{ij}(u,v)P_{ji}(v,u), \quad Q_i(u) = P_i(u)P_{\theta(i)}(-u), \qquad (i,j \in J). \eneq 
Then $Q = (Q_{ij})$ is a perfect coefficient matrix and $Q' = (Q_i)$ a perfect coefficient vector. 

\begin{prop} \label{pro: polrep oklr}
	The algebra $\oaklr$ has a faithful polynomial representation on $\tbP_{ \beta }$, given by: 
	\begin{itemize} 
		\item $e(\nu)$ $(\nu \in \comp)$ acting as projection onto $\bP_\nu$, 
		\item $x_1, \hdots, x_n$ acting naturally by multiplication, 
		\item $\tau_1, \hdots, \tau_{n-1}$ acting via  
		\[ 
		\tau_k \cdot f e(\nu) = 
		\left\lbrace
		\begin{array}{ll}
			\displaystyle\frac{s_k(f)-f}{x_{k}-x_{k+1}}e(\nu) & \textrm{ if } \nu_k=\nu_{k+1}, \\ \\ 
			P_{\nu_k,\nu_{k+1}}(x_{k},x_{k+1})s_k( f)e\bigl(s_k\cdot \nu \bigr) & \textrm{ otherwise, } \\ \\
		\end{array} 
		\right. 
		\]
		\item $\tau_0$ acting via 
		\[
		\tau_0 \cdot f e(\nu) = 
		\left\lbrace
		\begin{array}{ll}
			\displaystyle\frac{s_0(f)-f}{x_1}e(\nu) & \textrm{ if } \theta(\nu_1) = \nu_1, \\ \\
			P_{\nu_1}(x_1)s_0(f)e\bigl(s_0\cdot \nu \bigr) & \textrm{ otherwise. } 
		\end{array}
		\right. 
		\]
	\end{itemize} 
\end{prop}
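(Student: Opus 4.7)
The plan is to verify two things: (i) the formulas define an $\oaklr$-module structure on $\tbP_\beta$, i.e., the defining relations of Definition \ref{def: oklr} are satisfied by the proposed operators; and (ii) the resulting representation is faithful.

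For (i), a large portion of the relations is essentially free. The idempotent and polynomial relations are immediate. The relations involving only $\tau_k$ and $x_l$ for $k,l \geq 1$ are precisely the defining relations of the standard KLR polynomial representation on $\bigoplus_\nu \bP_\nu$; their verification using the identity $Q_{ij}(u,v) = P_{ij}(u,v)P_{ji}(v,u)$ is the classical computation in the sense of Rouquier and Khovanov--Lauda. Since $\tau_0$ modifies only the first coordinate of $\nu$ and acts trivially on $x_l$ for $l \geq 2$, the commutativity relations $\tau_0\tau_k = \tau_k \tau_0$ for $k \neq 1$ and $\tau_0 x_l = x_l \tau_0$ for $l \neq 1$ are automatic. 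The genuinely new relations are: the $\tau_0^2$ relation, the $\tau_0$-$x_1$ anticommutation relation, and the braid-like four-case identity $((\tau_1\tau_0)^2 - (\tau_0\tau_1)^2) e(\nu)$.

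The first two of these are brief computations. For $\tau_0^2$: if $\theta(\nu_1)=\nu_1$, the expression $\frac{s_0(f)-f}{x_1}$ is $s_0$-invariant (since $s_0(x_1)=-x_1$), so applying $\tau_0$ once more via the divided-difference formula yields $0$, matching $Q_{\nu_1}(-x_1)=0$ which holds by the regularity condition (V1); if $\theta(\nu_1) \neq \nu_1$, two applications of the multiplicative formula give $P_{\theta(\nu_1)}(x_1) P_{\nu_1}(-x_1) f e(\nu)$, which equals $Q_{\nu_1}(-x_1)f e(\nu)$ by definition of $Q'$. The relation $(\tau_0 x_1 + x_1\tau_0)e(\nu)$ is verified by a one-line check in each of the two cases, using again $s_0(x_1)=-x_1$. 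The main obstacle is the braid-like relation: one must expand both $(\tau_1\tau_0)^2$ and $(\tau_0\tau_1)^2$ as four-fold compositions and, in each of the four cases prescribed by Definition \ref{def: oklr}, choose at each step between the divided-difference formula and the multiplicative formula according to whether consecutive letters coincide or are fixed by $\theta$. Carefully tracking the sign change $s_0(x_1)=-x_1$ and the swap $s_1\cdot(\nu_1,\nu_2,\ldots)=(\nu_2,\nu_1,\ldots)$, and using the factorizations $Q = P\cdot P^{\mathrm{op}}$ and $Q_i = P_i P_{\theta(i)}(-\,\cdot\,)$, one matches the output against the prescribed right-hand side in each case. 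This is patient but routine; the combinatorics of the four cases (which enumerate all possible $\theta$-relations between $\nu_1$ and $\nu_2$) is the most delicate step.

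For (ii), faithfulness, we appeal to the PBW theorem (Proposition \ref{thm: oklr PBW}): $\oaklr$ has a $\cor$-basis $\{\tau_w x^\alpha e(\nu) : w \in \weyl_n,\ \alpha \in \N^n,\ \nu \in \comp\}$, for a fixed choice of reduced expression $\tau_w$ for each $w \in \weyl_n$. Suppose a $\cor$-linear combination $T = \sum c_{w,\alpha,\nu}\, \tau_w x^\alpha e(\nu)$ acts as zero on $\tbP_\beta$. Fixing $\nu_0 \in \comp$ and applying $T$ to $f e(\nu_0)$ for $f \in \cor[x_1,\ldots,x_n]$, only terms with $\nu = \nu_0$ survive. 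The image is then supported on $\bigoplus_{w} \bP_{w\cdot\nu_0}$; by induction on $\ell(w)$ using the action formulas, each $\tau_w$ contributes an explicit product of polynomials $P_{\nu_i,\nu_{i+1}}$ and $P_{\nu_1}$ arising from the generators $s_k$ and $s_0$ occurring in the reduced expression at which consecutive letters \emph{disagree} (respectively $\nu_1 \notin J^\theta$); these factors are nonzero by (M3) and (V2). A standard Demazure-operator argument, analogous to the regular KLR case but extended to include the $s_0$-action on the first slot, then shows that the leading monomial in a lex order on $x$-variables in the $e(w\cdot\nu_0)$-component isolates $c_{w,\alpha,\nu_0}$ uniquely; forcing $T = 0$ then yields $c_{w,\alpha,\nu_0}=0$ for all triples, proving faithfulness.
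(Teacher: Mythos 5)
Your verification of the module relations follows the same route as the paper: defer the relations not involving $\tau_0$ to the standard KLR polynomial representation, dispatch the $\tau_0^2$ and $\tau_0x_1+x_1\tau_0$ relations quickly, and isolate the braid-type relation $((\tau_1\tau_0)^2-(\tau_0\tau_1)^2)e(\nu)$ as the genuine work. Your $\tau_0^2$ computation is correct in both cases, and your identification of the four (plus the vanishing) subcases of the braid relation matches the structure of the paper's explicit computation. The paper does carry the braid-relation computation out in full (including the Demazure-operator trick in the cases where the right-hand side vanishes), so you should be aware that "patient but routine" hides several pages of calculation and two nontrivial uses of properties of Demazure operators, but the strategy is the same.

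The one place where your proposal goes astray is the faithfulness argument. You appeal to Proposition~\ref{thm: oklr PBW} to obtain a $\cor$-basis $\{\tau_w x^\alpha e(\nu)\}$, but in this paper that proposition is proved \emph{after} and \emph{using} the present one: the implication (3)$\Rightarrow$(2) in the PBW theorem is deduced precisely by invoking the faithful polynomial representation. Citing the PBW theorem here is therefore circular. What is actually available at this stage is only the \emph{spanning} direction (that $\{\tau_w x^\alpha e(\nu)\}$ spans $\oaklr$), which follows from the defining relations alone; combined with your Demazure/leading-term argument showing these elements act linearly independently on $\tbP_\beta$, one obtains both faithfulness and (as a by-product) the PBW basis. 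The paper instead simply defers faithfulness to the analogous argument in Rouquier \cite[Proposition 3.12]{Rouquier-2KM}. So the underlying mathematics in your sketch is fine, but you should invoke only spanning (not the basis claim) to avoid the circularity, and you should note that showing the leading coefficient isolates $c_{w,\alpha,\nu_0}$ requires some care when $\nu_0$ has nontrivial stabilizer in $\weyl_n$, which is where the finer structure of the Rouquier argument is needed.
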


Whenever we want to emphasize the dependence of the above representation on $(P,P')$, we will write $\tbP_{\beta}^{P,P'}$. 

\begin{proof}
	The proof that the operators defined above satisfy all the relations from Definition \ref{def: oklr} not involving $\tau_0$ is the same as in the case of the KLR algebra, and can be found in, e.g., the proof of \cite[Proposition 3.12]{Rouquier-2KM}. The other relations are easy to check, with the exception of the deformed braid relations. We prove these explicitly below. 
	
	To simplify exposition, we omit the idempotents. We also abbreviate $i = \nu_1$, $j = \nu_2$. First consider the case where $i \neq j$ and $j = \theta(i)$.  Then: 
	
	\begin{align*}
		\tau_1\tau_0\tau_1 \tau_0(f) =& \ \tau_1\tau_0\tau_1 P_i(x_1)s_0(f) = \tau_1\tau_0\frac{P_i(x_2)s_1s_0(f) - P_i(x_1)s_0(f)}{x_1-x_2} \\
		=& \ \tau_1 P_j(x_1) \frac{P_i(x_2)s_0s_1s_0(f) - P_i(-x_1)f}{-x_1-x_2} \\
		=& \ P_{ij}(x_1,x_2)P_j(x_2) \frac{P_i(x_1)s_1s_0s_1s_0(f) - P_i(-x_2)s_1(f)}{-x_1-x_2}, 
	\end{align*}
	
	\begin{align*}
		\tau_0\tau_1\tau_0\tau_1(f) =& \ \tau_0\tau_1\tau_0 P_{ij}(x_1,x_2)s_1(f) = \tau_0\tau_1 P_j(x_1)P_{ij}(-x_1,x_2)s_0s_1(f) \\
		=& \ \tau_0 \frac{P_j(x_2)P_{ij}(-x_2,x_1)s_1s_0s_1(f) - P_j(x_1)P_{ij}(-x_1,x_2)s_0s_1(f)}{x_1-x_2} \\
		=& \ P_i(x_1) \frac{P_j(x_2)P_{ij}(-x_2,-x_1)s_0s_1s_0s_1(f) - P_j(-x_1)P_{ij}(x_1,x_2)s_1(f)}{-x_1-x_2}.
	\end{align*}
	Since, by $\theta$-symmetry, we have $P_{ij}(x_1,x_2) = P_{ij}(-x_2,-x_1)$, it follows that 
	\[ \textstyle 
	((\tau_1\tau_0)^2 - (\tau_0\tau_1)^2)(f) = \frac{P_j(x_2)P_i(-x_2) - P_i(x_1)P_j(-x_1)}{x_1+x_2}P_{ij}(x_1,x_2)s_1(f) = \frac{Q_j(x_2)-Q_i(x_1)}{x_1+x_2}\tau_1(f). 
	\] 
	
	Secondly, let $i \neq \theta(i)$ and $j = \theta(j)$. Then: 
	
	\begin{align*}
		\tau_1\tau_0\tau_1 \tau_0(f) =& \ \tau_1\tau_0\tau_1 P_i(x_1)s_0(f) = \tau_1\tau_0 P_{\theta(i),j}(x_1,x_2)P_i(x_2) s_1s_0(f) \\
		=& \ \tau_1 \frac{P_{\theta(i),j}(-x_1,x_2)P_i(x_2) s_0s_1s_0(f) - P_{\theta(i),j}(x_1,x_2)P_i(x_2) s_1s_0(f)}{x_1} \\
		=& \textstyle \ P_{j,\theta(i)}(x_1,x_2) \frac{P_{\theta(i),j}(-x_2,x_1)P_i(x_1) s_1s_0s_1s_0(f) - P_{\theta(i),j}(x_2,x_1)P_i(x_1) s_0(f)}{x_2},
	\end{align*}
	
	\begin{align*}
		\tau_0\tau_1\tau_0\tau_1(f) =& \ \tau_0\tau_1\tau_0 P_{ij}(x_1,x_2)s_1(f) = \tau_0\tau_1 \frac{P_{ij}(-x_1,x_2)s_0s_1(f) - P_{ij}(x_1,x_2)s_1(f)}{x_1} \\
		=& \ \tau_0 P_{ji}(x_1,x_2)\frac{P_{ij}(-x_2,x_1)s_1s_0s_1(f) - P_{ij}(x_2,x_1)f}{x_2} \\
		=& \ P_i(x_1) P_{ji}(-x_1,x_2) \frac{P_{ij}(-x_2,-x_1)s_0s_1s_0s_1(f) - P_{ij}(x_2,-x_1)s_0(f)}{x_2}. 
	\end{align*}
	Again, $\theta$-symmetry implies that 
	\begin{align*}
		((\tau_1\tau_0)^2 - (\tau_0\tau_1)^2)(f) =& \textstyle \ \frac{-P_{j,\theta(i)}(x_1,x_2)P_{\theta(i),j}(x_2,x_1) + P_{ij}(x_2,-x_1)P_{j,i}(-x_1,x_2)}{x_2}P_{i}(x_1)s_0(f) \\
		=& \ \frac{Q_{ij}(x_2,-x_1)-Q_{ij}(-x_2,-x_1)}{x_2}\tau_0(f). 
	\end{align*}
	
	Thirdly, let $\theta(i) = i \neq j = \theta(j)$. Then: 
	\begin{align*}
		\tau_1\tau_0\tau_1 \tau_0(f) =& \ \tau_1\tau_0\tau_1 \frac{s_0(f) - f}{x_1} = \tau_1\tau_0 P_{ij}(x_1,x_2) \frac{s_1s_0(f) - s_1(f)}{x_2} \\
		=& \ \tau_1 \frac{P_{ij}(-x_1,x_2)[s_0s_1s_0(f) - s_0s_1(f)] - P_{ij}(x_1,x_2)[s_1s_0(f) - s_1(f)]}{x_1x_2} \\
		=& \textstyle \ P_{ji}(x_1,x_2) \frac{P_{ij}(-x_2,x_1)[s_1s_0s_1s_0(f) - s_1s_0s_1(f)] - P_{ij}(x_2,x_1)[s_0(f) - (f)]}{x_1x_2}, 
	\end{align*}
	
	\begin{align*}
		\tau_0\tau_1\tau_0\tau_1(f) =& \ \tau_0\tau_1\tau_0 P_{ij}(x_1,x_2)s_1(f) = \tau_0\tau_1 \frac{P_{ij}(-x_1,x_2)s_0s_1(f) - P_{ij}(x_1,x_2)s_1(f)}{x_1} \\
		=& \ \tau_0 P_{ji}(x_1,x_2) \frac{P_{ij}(-x_2,x_1)s_1s_0s_1(f) - P_{ij}(x_2,x_1)f}{x_2} \\ 
		=& \ \frac{P_{ji}(-x_1,x_2)[P_{ij}(-x_2,-x_1)s_0s_1s_0s_1(f)-P_{ij}(x_2,-x_1)s_0(f)]}{x_1x_2} \\
		-& \ \frac{P_{ji}(x_1,x_2)[P_{ij}(-x_2,x_1)s_1s_0s_1(f) - P_{ij}(x_2,x_1)f]}{x_1x_2}. 
	\end{align*}
	By $\theta$-symmetry, we conclude that 
	\begin{align*}
		((\tau_1\tau_0)^2 - (\tau_0\tau_1)^2)(f) =& \ \frac{-P_{ji}(x_1,x_2)P_{ij}(x_2,x_1) + P_{ji}(-x_1,x_2)P_{ij}(x_2,-x_1)}{x_1x_2}s_0(f) \\
		=& \ \frac{Q_{i,j}(x_2,-x_1)-Q_{i,j}(x_2,x_1)}{x_1x_2} (x_1\tau_0+1)f. 
	\end{align*} 
	
	Fourthly, let $i = \theta(i)$ and $j \neq \theta(j)$. One easily checks (using $\theta$-symmetry) that 
	$((\tau_1\tau_0)^2 - (\tau_0\tau_1)^2)(f) = g\cdot s_1s_0s_1\Delta_0(f) - \Delta_0(g\cdot s_1s_0s_1(f))$, where $g$ is an $s_0$-invariant polynomial and $\Delta_0 = x_1^{-1}(s_0 - 1)$. It now follows from the properties of Demazure operators that 
	\[
	((\tau_1\tau_0)^2 - (\tau_0\tau_1)^2)(f) = g\cdot s_1s_0s_1\Delta_0(f) - (\Delta_0(g)\cdot s_1s_0s_1(f) + s_0(g)\Delta_0(s_1s_0s_1(f))) = 0. 
	\]
	
	Fifthly, let $i=j$ and $i \neq \theta(i)$. One checks, as above, that 
	$((\tau_1\tau_0)^2 - (\tau_0\tau_1)^2)(f) = \Delta_1(g\cdot s_0s_1s_0(f)) - g\cdot s_0s_1s_0\Delta_1(f)$, where $g$ is an $s_1$-invariant polynomial and $\Delta_1 = (x_1-x_2)^{-1}(s_1 - 1)$. As above, it follows from the properties of Demazure operators that $((\tau_1\tau_0)^2 - (\tau_0\tau_1)^2)(f)~=~0$. 
	
	Finally, suppose that $i = j = \theta(j)$. Then each $\tau_0$ and $\tau_1$ acts as a Demazure operator, but Demazure operators satisfy the braid relation. This completes the proof that $\tbP_{ \beta }$ is a representation of $\oaklr$. 
	
	The proof of faithfulness is analogous to the case of KLR algebras, see, e.g., \cite[Proposition 3.12]{Rouquier-2KM}. 
\end{proof} 

Next, for each $i,j \in J $,
we choose holomorphic functions $c_{ij}(u,v) \in\cor[[u,v]]$ such that
\begin{equation} \label{eq: c-ij} 
	c_{ij}(u,v)c_{ji}(v,u)=1, \quad c_{ii}(u,v)=1, \quad c_{ij}(u,v) = c_{\theta(j)\theta(i)}(-v,-u).  
\end{equation}
Moreover, for each $i \in J$, we also choose holomorphic functions $c_i \in \cor[[u]]$ such that 
\begin{equation} \label{eq: c-i} 
	c_i(u) = c_{\theta(i)}(-u), \qquad i = \theta(i) \ \Rightarrow \ c_i(u) = 1. 
\end{equation} 
Set
\begin{equation*}
	\widetilde{P}_{ij}(u,v) = P_{ij}(u,v)c_{ij}(u,v), \quad \widetilde{P}_{i}(u) = P_i(u)c_i(u). 
\end{equation*} 

\begin{corollary}
	There is an injective $\tbP_{ \beta }$-algebra homomorphism 
	\eq \label{eq: oklr loc}
	\oaklr \hookrightarrow \skewrrr
	\eneq
	given by 
	\begin{align*} 
		\tau_0 e(\nu) =& \ 
		\left\lbrace
		\begin{array}{ll}
			x_1^{-1}(s_0-1	) e(\nu) \ \qquad \qquad & \textrm{ if } \nu_1 = \theta(\nu_1), \\ 
			\widetilde{P}_{\nu_1}(x_1)s_0e(\nu) \ \qquad \qquad & \textrm{ otherwise, } 
		\end{array}
		\right. \\
		\tau_k e(\nu) =& \ 
		\left\lbrace
		\begin{array}{ll}
			(x_k-x_{k+1})^{-1}(s_k-1)e(\nu) & \textrm{ if } \nu_k=\nu_{k+1}, \\ 
			\widetilde{P}_{\nu_k,\nu_{k+1}}(x_k,x_{k+1})s_k e(\nu) & \textrm{ otherwise, } \\ 
		\end{array} 
		\right. 
	\end{align*} 
	for $1 \leq k \leq n-1$. 
\end{corollary}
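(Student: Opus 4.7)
The plan is to define a map $\Phi : \oaklr \to \skewrrr$ by the stated formulas on generators, verify that $\Phi$ respects all the defining relations of $\oaklr$, and deduce injectivity via a leading-term argument in the skew group algebra.

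For the verification of relations, the idempotent, polynomial, and mixed relations follow immediately from the identities $s_k x_l = x_{s_k(l)} s_k$ and $s_0 x_1 = -x_1 s_0$ in $\skewrrr$, together with $s_0 x_l = x_l s_0$ for $l \neq 1$. The quadratic relation for $\tau_k$ with $\nu_k = \nu_{k+1}$ is the standard identity $\partial_k^2 = 0$ for the Demazure operator $\partial_k = (x_k - x_{k+1})^{-1}(s_k - 1)$, which matches $Q_{ii} = 0$; the case $\nu_k \neq \nu_{k+1}$ reduces to
\[\widetilde{P}_{\nu_{k+1},\nu_k}(x_k, x_{k+1}) \cdot \widetilde{P}_{\nu_k, \nu_{k+1}}(x_{k+1}, x_k) = Q_{\nu_k,\nu_{k+1}}(x_{k+1},x_k),\]
which holds by \eqref{eq: P->Q} together with the cocycle relation \eqref{eq: c-ij}. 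The analogous identity for $\tau_0^2 e(\nu)$ (with $\nu_1 \notin J^\theta$) uses \eqref{eq: c-i}, while for $\nu_1 \in J^\theta$ it follows again from $\partial^2 = 0$ and $Q_i = 0$ for $i \in J^\theta$. The three-strand braid relation for $\tau_{k+1}\tau_k\tau_{k+1} - \tau_k\tau_{k+1}\tau_k$ is treated by the same case analysis as in the standard KLR situation.

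The most technical step is the four-case verification of $((\tau_1\tau_0)^2 - (\tau_0\tau_1)^2)e(\nu)$. The strategy is to repeat verbatim the computations carried out in the proof of Proposition~\ref{pro: polrep oklr}, but now in $\skewrrr$ and with $\widetilde{P}, \widetilde{P}'$ in place of $P, P'$. The scalar factors contributed by the functions $c_{ij}$ and $c_i$ always appear paired as $c_{ij}(u,v)c_{ji}(v,u) = 1$ or $c_i(u)c_{\theta(i)}(-u)$, and, thanks to \eqref{eq: c-ij}--\eqref{eq: c-i} together with $\theta$-symmetry, they either cancel or can be absorbed so as to reproduce exactly the identities obtained in Proposition~\ref{pro: polrep oklr}. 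The two homogeneous cases ($\nu_1, \nu_2 \in J^\theta$ or $\nu_1 = \nu_2 \notin J^\theta$) reduce to the vanishing of a commutator of two Demazure-type operators that satisfy a braid relation, as in the polynomial representation.

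For injectivity, combine the PBW theorem (Proposition~\ref{thm: oklr PBW}) with the fact that $\skewrrr$ is a free left $\tbK[\beta]$-module on the basis $\weyl_n$. Given a PBW basis element $\tau_w \cdot p(x) \cdot e(\nu)$, expand $\tau_w$ using a reduced expression for $w$ and push all $s_k$'s to the right; the resulting image $\Phi(\tau_w p(x) e(\nu))$ has leading term (with respect to the length filtration on $\weyl_n$) a nonzero rational function times $w \cdot e(\nu)$, with correction terms supported on strictly shorter Weyl group elements. Linear independence of these leading terms across distinct triples $(w, \nu, p)$ yields the injectivity of $\Phi$, following the strategy of \cite[Proposition~3.12]{Rouquier-2KM} already referenced in the proof of Proposition~\ref{pro: polrep oklr}.
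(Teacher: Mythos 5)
Your proof is correct in spirit, but it does substantially more work than the paper's one-line proof, which simply invokes Proposition~\ref{pro: polrep oklr}. The key observation you miss is that the twisted pair $(\widetilde{P},\widetilde{P}')$ still satisfies (M1)--(M3) and (V1)--(V2), and produces the \emph{same} $(Q,Q')$ via \eqref{eq: P->Q} (the $c_{ij}$-- and $c_i$--factors cancel by \eqref{eq: c-ij}--\eqref{eq: c-i}). Hence Proposition~\ref{pro: polrep oklr} applies \emph{as stated} with $(\widetilde{P},\widetilde{P}')$ in place of $(P,P')$: there is no need to ``repeat the computations verbatim'' --- the defining relations of $\oaklr$ have already been verified for the operators appearing in \eqref{eq: oklr loc}, acting on $\tbPh_\beta \subset \tbK[\beta]$. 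What remains is simply to note that these operators lie in the skew group algebra $\skewrrr \subset \End(\tbK[\beta])$, which is clear by inspection of the formulas. As for injectivity, your PBW-plus-leading-term argument is sound, but again heavier than necessary: the composite $\oaklr \to \skewrrr \to \End(\tbK[\beta])$ restricts to the faithful polynomial representation of Proposition~\ref{pro: polrep oklr}, so injectivity of \eqref{eq: oklr loc} is immediate. In short, your route re-proves the Proposition rather than citing it, and re-proves faithfulness via PBW rather than citing it; both steps are mathematically valid, but you should recognize that the Corollary is genuinely a corollary, following formally from the Proposition without fresh computation.
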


\begin{proof}
	This follows directly from Proposition \ref{pro: polrep oklr}. 
\end{proof}

\subsection{PBW theorem} 

In this subsection assume that $Q$ is a coefficient matrix satisfying (M1)--(M3) and $Q'$ a coefficient vector satisfying (V1)--(V2). The algebra $\oaklr$ is filtered with $\deg x_l$, $\deg e(\nu) = 0$ and $\deg \tau_k = 1$. 
We say that $\oaklr$ satisfies the \emph{PBW property} if 
$\gr \oaklr \cong {}^0\mathcal{H}_n^f \ltimes \tbP_{ \beta }$, where ${}^0\mathcal{H}_n^f$ is the (non-affine) nil-Hecke algebra of type $B_n$ (see, e.g., \cite{Kostant-Kumar-nil}). 

For any $w \in \weyl_n$, choose a reduced expression $w = s_{k_1} \hdots s_{k_l}$ and set $\tau_w = \tau_{s_{k_1}} \hdots \tau_{s_{k_l}}$. The definition of $\tau_w$ depends on the choice of reduced expression. 

\begin{prop} \label{thm: oklr PBW}
	Let $n \geq 1$. 
	The following are equivalent: 
	\begin{enumerate}
		\item $\oaklr$ satisfies the PBW property, 
		\item $\oaklr$ is a free $\cor$-module with basis 
		\[ \{ \tau_w x_1^{a_1}\hdots x_n^{a_n} e(\nu) \mid w \in \weyl_n,  (a_1,\hdots,a_n) \in \Z_{\geq0}^n, \nu \in \comp \},\] 
		\item $Q$ and $Q'$ are perfect. 
	\end{enumerate}
\end{prop}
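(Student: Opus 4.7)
The plan is to prove the equivalences in the order $(1) \Leftrightarrow (2)$ directly from the filtration, $(3) \Rightarrow (2)$ via the polynomial representation, and $(2) \Rightarrow (3)$ by a consistency analysis of the defining relations.

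\textbf{Step 1: $(1) \Leftrightarrow (2)$.} The filtration with $\deg x_l = \deg e(\nu) = 0$ and $\deg \tau_k = 1$ makes the proposed spanning set of (2) compatible with the associated graded. The non-affine nil-Hecke algebra $\ttt^0\mathcal{H}_n^f$ of type $\mathsf{B}_n$ has a $\cor$-basis $\{\tau_w : w \in \weyl_n\}$, so the PBW property (1) — i.e.\ $\gr \oaklr \cong \ttt^0\mathcal{H}_n^f \ltimes \tbP_\beta$ — is equivalent to the statement that $\oaklr$ is $\cor$-free on $\{\tau_w x^{\mathbf{a}} e(\nu)\}$. The only non-trivial point is independence of $\tau_w$ from the choice of reduced expression \emph{modulo lower filtration degree}, which follows from the deformed braid relations (whose right-hand sides have lower order in $\tau$).

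\textbf{Step 2: Spanning.} First I would show, without any perfectness hypothesis, that the set $\{\tau_w x^{\mathbf{a}} e(\nu)\}$ spans $\oaklr$. The mixed relations allow one to slide all $x_l$'s past any $\tau$, producing lower-degree correction terms; the quadratic relations reduce $\tau_k^2$ and $\tau_0^2$ to polynomials; and the deformed braid relations allow one to replace non-reduced expressions in the $\tau$'s by reduced ones, modulo strictly lower-degree terms. A straightforward induction on $\deg$ then yields spanning. This is the same argument used for ordinary KLR algebras, extended to the $\tau_0$ generator.

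\textbf{Step 3: $(3) \Rightarrow (2)$, linear independence.} Given a perfect $(Q,Q')$, I would produce a factorization $Q_{ij}(u,v) = P_{ij}(u,v)P_{ji}(v,u)$ and $Q_i(u) = P_i(u)P_{\theta(i)}(-u)$ with $(P,P')$ satisfying (M1)--(M3) and (V1)--(V2). Hermiticity of $Q$ and self-conjugacy of $Q'$ are precisely what allow such a factorization: choose a $\theta$-invariant total preorder on $J$, set $P_{ij} = Q_{ij}$ and $P_{ji} = 1$ on the ``upper'' indices, and define the remaining entries using $\theta$-symmetry. Proposition \ref{pro: polrep oklr} then gives a faithful representation on $\tbP_\beta$. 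To establish linear independence of the spanning set inside $\on{End}_\cor(\tbP_\beta)$, I would argue via a leading-term argument: for each $w \in \weyl_n$, the operator $\tau_w e(\nu)$ maps $\bP_\nu$ into $\bP_{w\cdot \nu}$, and its top-degree piece (with respect to the filtration by polynomial degree in $x$) is a distinguished Demazure-type operator times a nonzero polynomial. Distinct $w$'s and $\mathbf{a}$'s then yield operators whose leading terms are linearly independent by standard facts about nil-Hecke / Demazure operators of type $\mathsf{B}_n$.

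\textbf{Step 4: $(2) \Rightarrow (3)$.} This is the main obstacle. I plan to show that the existence of the PBW basis forces the consistency conditions (M4) and (V3). The idea is to evaluate certain critical elements in two different ways and extract polynomial identities. For (M4), consider $e(\nu)\tau_k^2 e(\nu) = Q_{\nu_k,\nu_{k+1}}(x_{k+1},x_k) e(\nu)$. Acting on the left by $\tau_k$ and using the mixed relations to slide $x$'s past $\tau_k$ yields $\tau_k^3 e(\nu)$ expressed in two ways, and PBW independence of $\tau_k$ and $1$ after normal-ordering forces $Q_{\nu_k,\nu_{k+1}}(u,v) = Q_{\nu_{k+1},\nu_k}(v,u)$. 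For (V3), an analogous computation with $\tau_0^3 e(\nu)$ (using the $\tau_0$-mixed relations when $\nu_1 = \theta(\nu_1)$ and the deformed braid relation $((\tau_1\tau_0)^2 - (\tau_0\tau_1)^2)$ when $\nu_1 \neq \theta(\nu_1)$) forces $Q_i(u) = Q_{\theta(i)}(-u)$. The hard part is keeping track of the combinatorics in the $\weyl_n$-orbits and ensuring that the identities derived are genuine polynomial identities (valid for all $u,v$), not just identities modulo some ideal. Once these are established, (M4) and (V3) follow, completing the cycle.
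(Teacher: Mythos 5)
Your overall plan follows essentially the same route as the paper's (admittedly terse) proof: the paper defers to Rouquier's Theorem~3.7 for the KLR part, explicitly highlights only the new feature needed for $(2)\Rightarrow(3)$ (namely deriving self-conjugacy (V3) from the PBW basis), and handles $(3)\Rightarrow(2)$ via a factorization $Q_{ij}=P_{ij}(u,v)P_{ji}(v,u)$, $Q_i=P_iP_{\theta(i)}(-u)$ together with the faithful polynomial representation of Proposition~\ref{pro: polrep oklr}. Your Steps~1--3 and your $\tau_k^3$ argument for hermiticity (M4) in Step~4 are all in that spirit and are fine.

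The one genuine error is in Step~4 for (V3). You propose to use ``the $\tau_0$-mixed relations when $\nu_1 = \theta(\nu_1)$ and the deformed braid relation $((\tau_1\tau_0)^2-(\tau_0\tau_1)^2)$ when $\nu_1 \neq \theta(\nu_1)$.'' This has the cases backwards and invokes a relation that is not needed. When $\nu_1 = \theta(\nu_1)$, regularity (V1) already forces $Q_{\nu_1}=0$, so there is nothing to prove. The only relevant case is $\nu_1 \neq \theta(\nu_1)$, and there the argument uses precisely the $\tau_0$-mixed relation, which in that case reads $\tau_0 x_1 + x_1\tau_0 = 0$. One then computes $\tau_0^3 e(\nu)$ in two ways: first as $\tau_0\bigl(\tau_0^2 e(\nu)\bigr) = \tau_0 Q_{\nu_1}(-x_1)e(\nu) = Q_{\nu_1}(x_1)\tau_0 e(\nu)$ (sliding $x_1$ past $\tau_0$ with the sign flip), and second as $\tau_0^2\bigl(\tau_0 e(\nu)\bigr) = \tau_0^2 e(s_0\nu)\tau_0 = Q_{\theta(\nu_1)}(-x_1)\tau_0 e(\nu)$. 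PBW independence then gives $Q_{\theta(\nu_1)}(-x_1) = Q_{\nu_1}(x_1)$, i.e.\ (V3). The deformed braid relation $((\tau_1\tau_0)^2-(\tau_0\tau_1)^2)e(\nu)$ plays no role here. Once you correct this misidentification, your Step~4 matches the paper's argument.
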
 

\begin{proof}
	The proof is a straightforward generalization of the proof of \cite[Theorem 3.7]{Rouquier-2KM}. 
	Let us briefly comment on the new features. 
	Suppose that (2) holds and let $\nu_1 \neq \theta(\nu_1)$. The quadratic relation 
	then implies that 
	\[
	Q_{\theta(\nu_1)}(-x_1)\tau_0e(\nu) = \tau_0^{3}e(\nu) = \tau_0 Q_{\nu_1}(-x_1)e(\nu) = Q_{\nu_1}(x_1)\tau_0e(\nu). 
	\] 
	It follows that 
	\[ (Q_{\theta(\nu_1)}(-x_1) - Q_{\nu_1}(x_1))\tau_0e(\nu) = 0.\] 
	Now (2) implies that $Q_{\theta(\nu_1)}(-x_1) - Q_{\nu_1}(x_1) = 0$, i.e., $Q'$ is self-conjugate. Conversely, if both $Q$ and $Q'$ are perfect, then we can use Proposition \ref{pro: polrep oklr}, 
	with $P_{ij} = Q_{ij}$, $P_{ji} = 1$ $(i < j)$, $P_i = Q_i$ and $P_{\theta(i)} = 1$ ($i < \theta(i)$) for some ordering of $J$, to deduce (2). 
\end{proof} 

\subsection{Orientifold KLR algebras associated to quivers}
\label{ss: quiver klr}

Let $\Gamma = (J,\Omega)$ be a quiver with vertices $J$ and arrows $\Omega$. We assume that $\Gamma$ does not have loops. 
Given an arrow $a \in \Omega$, let $s(a)$ be its source, and $t(a)$ its target. 
If $i, j \in J$, let $\Omega_{ij} \subset \Omega$ be the subset of arrows $a$ such that $s(a) = i$ and $t(a) = j$. Let $a_{ij} = |\Omega_{ij}|$ and abbreviate $\dvec a_{ij} = a_{ij}+a_{ji}$. We assume that $a_{ij} < \infty$ for all $i,j \in J$. 

\begin{definition}  \label{def: contr-inv}
	A (contravariant) \emph{involution} of the quiver $\Gamma$ is a pair of involutions $\theta \colon J \to J$ and $\theta \colon \Omega \to \Omega$ such that: 
	\begin{enumerate}
		\item $s(\theta(a)) = \theta(t(a))$ and $t(\theta(a)) = \theta(s(a))$ for all $a \in \Omega$, 
		\item if $t(a) = \theta(s(a))$ then $a = \theta(a)$. 
	\end{enumerate} 
\end{definition} 

Fix a quiver $\Gamma$ with an involution $\theta$, and two dimension vectors $\beta \in \N[J]^\theta$, $\lambda \in \N[J]$ such that $\Norm{\beta}_{\theta} = n$ and $\lambda(i) = 0$ if $i \in J^\theta$. We call $\lambda$ the \emph{framing dimension vector}. Note that $\lambda$ need not be self-dual. 

Set 
\[ P_{ij}(u,v) = \delta_{i \neq j} (v-u)^{a_{ij}}, \quad P_{i}(u) = \delta_{i \neq \theta(i)} (-u)^{\lambda(i)}\] 
for $i, j \in J$, and define $(Q,Q')$ as in \eqref{eq: P->Q}. Since, by Definition \ref{def: contr-inv}, $a_{ij} = a_{\theta(j)\theta(i)}$, the coefficient matrix $P$ is $\theta$-symmetric and, therefore, $(Q,Q')$ are perfect. 

\begin{definition}
	The KLR algebra associated to $(\Gamma, \beta)$ and the orientifold KLR algebra associated to $(\Gamma, \theta, \beta, \lambda)$ are, respectively, 
	\[ \mathcal{R}^\Gamma(\beta) = \mathcal{R}(J, Q, \beta), \quad {}^\theta\mathcal{R}^\Gamma(\beta;\lambda) = {}^\theta\mathcal{R}(J, Q, Q', \beta).\] 
	We endow these algebras with the following grading: 
	\begin{align*}
		\deg e(\nu) =& \ 0, \\
		\deg x_k =& \ 2, \\ 
		\deg \tau_k e(\nu) =& \ 
		\left\lbrace
		\begin{array}{ll}
			-2 & \textrm{ if } \nu_k=\nu_{k+1}, \\ 
			\dvec a_{\nu_k,\nu_{k+1}} \quad  & \textrm{ otherwise, } \\  
		\end{array} 
		\right. \\
		\deg \tau_0 e(\nu) =& \ 
		\left\lbrace
		\begin{array}{ll}
			-2 & \textrm{ if } \theta(\nu_1) = \nu_{1}, \\ 
			{}^\theta\lambda(\nu_1) \quad  \ & \textrm{ otherwise. } \\ 
		\end{array} 
		\right. 
	\end{align*} 
\end{definition}

Most of the time we will omit $\Gamma$ from the notation, as the choice of quiver is clear from the context. Also note that, by Proposition \ref{pro: polrep oklr}, the algebra $\oklr$ has a faithful polynomial representation on $\tbP_{\beta}^{P,P'}$.

\section{Enomoto-Kashiwara algebra, quantum shuffle modules and Lyndon words} \label{sec: EK}

\subsection{Notation} \label{subsec: notation}

Let $J = \{ \alpha_k \mid k \in \zodd \}$ and equip $\mathsf{Q} = \Z[J]$ with the symmetric bilinear form 
\begin{equation} \label{bilform}
\alpha_k \cdot \alpha_l = \left\{
\begin{array}{l l}
2 & \mbox{if } k=l \\ 
- 1 & \mbox{if } k = l \pm 2 \\
0 & \mbox{otherwise.}  
\end{array} \right.
\end{equation} 
Then $(J,\cdot)$ is the Cartan datum associated to $\g = \mathfrak{sl}_\infty$. 
We identify $J$ with the set of simple roots of the root system $\Phi$ of type $\mathtt{A}_\infty$. 
Then $\proot = \{ \beta_{k,l} \mid k \leq l \in \zodd \}$, where $\beta_{k,l} = \alpha_{k} + \alpha_{k+2} + \cdots + \alpha_{l}$, is a set of positive roots. 
Let $\mathsf{P} = \{ \lambda \in \Q \otimes_{\Z} \mathsf{Q} \mid \lambda \cdot i \in \Z \mbox{ for all } i \in J \}$ be the weight lattice,  $\mathsf{P}_+ = \{ \lambda \in \mathsf{P} \mid \lambda \cdot i \in \Z_{\geq 0} \mbox{ for all } i \in J \}$ the set of dominant integral weights, and $\mathsf{Q}_+ = \N[J]$. 
Given $\beta = \sum_{i \in J} c_i i \in \mathsf{Q}_+$, let $N(\beta) = \frac{1}{2} (\beta \cdot \beta - \sum_{i \in J} c_i i \cdot i)$. 

Let $\theta \colon \mathsf{Q} \to \mathsf{Q}$ be the involution sending $\alpha_k \mapsto \alpha_{-k}$. The bilinear form \eqref{bilform} restricts to $\mathsf{Q}^\theta$. The image of $\Phi$ under the symmetrization map 
\[
\mathsf{Q} \to \mathsf{Q}^\theta, \quad \alpha_k \mapsto \alpha_k + \alpha_{-k} 
\] 
is isomorphic to the unreduced root system $\ttt\Phi$ of type $\mathtt{BC}_\infty$, and the image $\ttt\Phi^+$ of $\Phi^+$ is a set of positive roots for $\ttt\Phi$.

Let $q$ be an indeterminate and set $\Qq = \Q(q)$, $\Ql = \Z[q^{\pm 1}]$. Let \ $\bar{} \ \colon \Qq \to \Qq$ be the \emph{bar involution}, i.e., the $\Q$-algebra involution with $\bar{q} = q^{-1}$. Set 
\[
[n] = (q^n - q^{-n})/(q - q^{-1}), \quad [n]! = [n][n-1]\cdots[1], \quad [2n]!! = [2n][2n-2]\cdots[2]. 
\]
If $A$ is a $\Qq$-algebra, $a \in A$ and $n \in \N$, then $a^{(n)} = a^n/[n]!$. 
For $\nu = \nu_1^{a_1} \hdots \nu_k^{a_k} \in \wor$ with $\nu_j \neq \nu_{j+1}$, set $[\nu]! = [a_1]! \hdots [a_k]!$.

\subsection{The algebras $\ff$ and $\ffd$} 

Let $\ff$ be the $\Qq$-algebra generated by the elements $f_i$ $(i \in J)$ subject to the $q$-Serre relations: 
\[
\sum_{k+l = 1-i\cdot j} (-1)^k f_i^{(k)}f_jf_i^{(l)} = 0 \quad (i \neq j). 
\] 
The algebra $\ff$ is $\mathsf{Q}$-graded with $f_i$ in degree $-i$. We denote by $-\norm{u}$ the $\mathsf{Q}$-degree of a homogeneous element $u \in \ff$. 
Given $\nu = \nu_1 \cdots \nu_n \in \wor$, let $f_\nu = f_{\nu_1} \cdots f_{\nu_n}$. We will use notation of this form  more generally, i.e., given any collection of elements $y_i$ labelled by $i \in J$, we write $y_\nu = y_{\nu_1} \cdots y_{\nu_n}$. 

Kashiwara \cite{Kashiwara-91} introduced $q$-derivations $e'_i, e^*_i \in \End_{\Qq}(\ff)$ characterized by 
\begin{alignat*}{3}
e'_i(f_j) =& \ \delta_{ij}, \quad& e'_i(uv) =& \ e'_i(u)v + q^{-i \cdot \norm{u}} u e'_i(v), \\ 
e^*_i(f_j) =& \ \delta_{ij}, \quad& e^*_i(uv) =& \ q^{-i \cdot \norm{v}}e^*_i(u)v +  u e^*_i(v). 
\end{alignat*}
for all homogeneous elements $u,v \in \ff$.
Both $\{ e'_i \mid i \in J\}$ and $\{ e^*_i \mid i \in J\}$ satisfy the $q$-Serre relations. 

There is a unique non-degenerate symmetric bilinear form $(\cdot, \cdot)$ on $\ff$ such that $(1,1) = 1$ and
$(e'_i(u),v) = (u,f_iv)$ for $u,v \in \ff$ and $i \in J$. This form differs slightly from the form $(\cdot, \cdot)_L$ introduced by Lusztig \cite[Proposition 1.2.3]{lusztig-94} - see \cite[\S 2.2]{Leclerc-04} for the precise relationship. 
Let $\fint$ be the integral form of $\ff$, i.e., the $\Ql$-subalgebra generated by the $f_i^{(k)}$ $(i \in J, k \in \N)$, and let $\fintd = \{ u \in \ff \mid (u,v) \in \Ql \mbox{ for all } v \in \fint \}$ be its dual.

\subsection{Enomoto-Kashiwara algebra}

The subalgebra of $\End_{\Qq}(\ff)$ generated by the $e'_i$ and left multiplication by $f_i$ is called the \emph{reduced $q$-analogue} of $U(\g)$. The generators satisfy the relation 
\[
e'_i f_j = q^{-\alpha_i \cdot \alpha_j} f_j e'_i + \delta_{ij}. 
\] 
Enomoto and Kashiwara \cite{Enomoto-Kashiwara-06} defined a related algebra, which also depends on the involution $\theta$. As it appears this algebra does not have a distinctive name in the literature, we call it the Enomoto--Kashiwara algebra. 

\begin{definition}
The \emph{Enomoto--Kashiwara} algebra $\EK$ is the $\Qq$-algebra generated by $E_i$, $F_i$ and the invertible elements $T_i$ $(i \in J)$ subject to the following relations: 
\begin{itemize}
\item the $T_i$'s commute, 
\item $T_{\theta(i)} = T_i$ for any $i$, 
\item $T_iE_jT_i^{-1} = q^{(i+\theta(i))\cdot j}E_j$ and $T_iF_jT_i^{-1} = q^{-(i+\theta(i))\cdot j}F_j$ for $i,j\in J$, 
\item $E_iF_j = q^{-i\cdot j}F_jE_i + \delta_{ij} +\delta_{\theta(i)j}T_i$ for all $i,j \in J$, 
\item the $E_i$'s and the $F_i$'s satisfy the $q$-Serre relations. 
\end{itemize}
\end{definition}

\begin{prop} \label{pro:EKm}
Let $\lambda \in \mathsf{P}_+$. 
\begin{enumerate} 
\item There exists a $\EK$-module $\EKm$ generated by a non-zero vector $v_\lambda$ such that: 
\begin{enumerate}
\item $E_i v_\lambda = 0$ for any $i \in J$, 
\item $T_i v_\lambda = q^{\ttt\lambda \cdot i} v_\lambda$ for any $i \in J$, 
\item $\{ u \in \EKm \mid E_i u = 0 \mbox{ for any } i \in J \} = \Qq v_\lambda$. 
\end{enumerate} 
\item $\EKm$ is irreducible and unique up to isomorphism. 
\item There exists a unique symmetric bilinear form $( \cdot, \cdot )$ on $\EKm$ such that $(v_\lambda,v_\lambda) = 1$ and $(E_i u, v) = (u, F_i v)$ for any $i \in J$ and $u,v \in \EKm$. It is non-degenerate. 
\item There is a unique endomorphism $\bar{\cdot}$ of $\EKm$, called the bar involution, such that $\overline{v_\lambda} = v_\lambda$ and $\overline{a v} = \bar{a} \bar{v}$, $\overline{F_i v} = F_i \overline{v}$ for $a \in \Qq$ and $v \in \EKm$. 
\item Let $\ttt\widetilde{V}(\lambda)$ be the free $\ff$-module with generator $\tilde{v}_\lambda$ and a $\EK$-module structure given by 
\begin{alignat}{3} 
T_i(u\tilde{v}_\lambda) &= q^{\ttt\lambda \cdot i - (i+\theta(i))\cdot \norm{u}}{u}\tilde{v}_\lambda, \\ 
E_i(u\tilde{v}_\lambda) &= e'_i(u)\tilde{v}_\lambda,\\ \label{eq:Faction}
F_i(u\tilde{v}_\lambda) &= (f_iu + q^{\ttt\lambda \cdot i - i\cdot\norm{u}}{u}f_{\theta(i)})\tilde{v}_\lambda, 
\end{alignat}
for any $i \in J$ and $u \in \ff$. Then the subspace of $\ttt\widetilde{V}(\lambda)$ spanned by the vectors $F_\nu \cdot \tilde{v}_\lambda$ is a $\EK$-submodule isomorphic to $\EKm$. 
\end{enumerate}
\end{prop}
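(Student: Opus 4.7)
The plan is to establish part (5) first, as it provides the concrete model from which parts (1)--(4) can be extracted by highest-weight-module techniques adapted to the twist by $\theta$. Once the $\EK$-module structure on $\ttt\widetilde V(\lambda)$ is verified, the submodule generated by $\tilde v_\lambda$ will realize the desired module, and the remaining properties will follow from a bilinear form construction and a bar-involution transport argument.

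The bulk of the work is in (5). I would check the defining relations of $\EK$ directly on the given formulas. The relations among the $T_i$ and the commutations $T_iX_jT_i^{-1} = q^{\pm(i+\theta(i))\cdot j}X_j$ (for $X = E,F$) are immediate from the weight grading. The $q$-Serre relations for the $E_i$ pull back from the $q$-Serre relations satisfied by the Kashiwara derivations $e'_i$ on $\ff$. Two non-trivial checks remain. First, the commutator $E_iF_j = q^{-i\cdot j}F_jE_i + \delta_{ij} + \delta_{\theta(i)j}T_i$ unfolds by applying the $q$-Leibniz rules for $e'_i$ to both summands of $F_j(u\tilde v_\lambda) = (f_j u + q^{\ttt\lambda\cdot j - j\cdot\norm u}\, u f_{\theta(j)})\tilde v_\lambda$, after careful bookkeeping of the $q$-exponents and using that $i\cdot\theta(i) = 0$ whenever $\theta(i)\neq i$. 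Second, and this is the main computational obstacle, one must verify the $q$-Serre relations for the $F_i$: the action of $F_i$ mixes left multiplication by $f_i$ with a weight-twisted right multiplication by $f_{\theta(i)}$, and when $\theta(i) = j$ the two types of terms potentially interact. The strategy is to expand $\sum_{k+l=1-i\cdot j}(-1)^k F_i^{(k)}F_jF_i^{(l)}(u\tilde v_\lambda)$ into a sum of terms of the form $(\text{left word in }f) \cdot u \cdot (\text{right word in }f)$, and to recognise each coefficient as either a $q$-Serre expression in $\ff$ (which vanishes) or its analogue in the opposite algebra $\ff^{\mathrm{op}}$ (which also vanishes), with the weight-dependent $q$-powers cancelling exactly.

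With (5) in hand, I would set $\EKm \seteq \EK\cdot \tilde v_\lambda$. Properties (1)(a) and (1)(b) are immediate from $e'_i(1)=0$ and the diagonal action of $T_i$ on $\tilde v_\lambda$. For (1)(c), I would induct on the $\mathsf{Q}_+$-height: given $u \in \EKm$ of positive height annihilated by all $E_i$, one uses the commutator relation from (5) to push the $E_i$ through a presentation $u = \sum F_\nu \tilde v_\lambda$ and extract a recursive vanishing condition forcing $u = 0$. For the bilinear form in (3), I would define $(\cdot,\cdot)$ by $(v_\lambda,v_\lambda) = 1$ and the adjunction $(F_i u, v) = (u, E_i v)$, verify well-definedness by checking compatibility with the defining relations of $\EK$ (the only non-trivial check is again the commutator, already handled in (5)), and establish non-degeneracy by induction on weight, exploiting the non-degeneracy of Kashiwara's form on $\ff$. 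Irreducibility in (2) then follows: any non-zero submodule $N \subsetneq \EKm$ would have non-zero orthogonal complement $N^\perp$ containing a highest weight vector by (1)(c), hence containing $v_\lambda$, contradicting $(v_\lambda,v_\lambda) = 1$. Uniqueness in (2) is the standard universal-property argument using (1)(c).

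For the bar involution in (4), I would first define a $\Q$-linear involution on the $\ff$-Verma-type module (the free $\ff$-module with generator $\tilde v_\lambda$) by declaring $\overline{\tilde v_\lambda} = \tilde v_\lambda$, $\overline{a v} = \bar a \bar v$ for $a \in \Qq$, and $\overline{u \tilde v_\lambda} = \bar u \tilde v_\lambda$ for $u \in \ff$. A direct check on generators, using $\overline{f_i} = f_i$ in $\ff$ and the explicit formulas in (5), shows that this involution commutes with each $F_i$, and hence descends to the quotient $\EKm$ because the defining kernel is visibly bar-stable. The principal obstacle throughout the argument, as already noted, is the computational verification of the twisted $q$-Serre relations for the $F_i$ in step (5); everything else follows established patterns.
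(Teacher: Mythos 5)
The paper itself offers no proof here, only a citation to Enomoto--Kashiwara (2008), Proposition 2.11 and Lemma 2.15; your proposal is an attempt to reconstruct that argument, and the overall architecture (verify the explicit model in (5), then extract (1)--(4) via a highest-weight/bilinear-form argument) is indeed the right one. However, there is a genuine computational gap in your bar-involution step.

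You propose to define the involution on the free $\ff$-module by $\overline{u\tilde v_\lambda} = \bar u\, \tilde v_\lambda$ and claim that a direct check shows it commutes with each $F_i$. This is false. From \eqref{eq:Faction},
\[
F_i\bigl(\overline{u\tilde v_\lambda}\bigr)
= \bigl(f_i\bar u + q^{\ttt\lambda\cdot i - i\cdot\norm{u}}\,\bar u f_{\theta(i)}\bigr)\tilde v_\lambda,
\qquad
\overline{F_i(u\tilde v_\lambda)}
= \bigl(f_i\bar u + q^{-(\ttt\lambda\cdot i - i\cdot\norm{u})}\,\bar u f_{\theta(i)}\bigr)\tilde v_\lambda,
\]
so the two differ by the sign of the exponent $\ttt\lambda\cdot i - i\cdot\norm{u}$, which is generically nonzero. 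The bar involution on $\ttt\widetilde V(\lambda)$ therefore cannot be the naive $u\tilde v_\lambda\mapsto\bar u\tilde v_\lambda$. The correct route (and the one Enomoto--Kashiwara take) is to construct the bar involution first on the algebra $\EK$ itself, sending $E_i\mapsto E_i$, $F_i\mapsto F_i$, $T_i\mapsto T_i^{-1}$, $q\mapsto q^{-1}$, observe that the defining left ideal for the universal highest weight module is bar-stable, and then pass to the irreducible quotient using the uniqueness of the maximal proper submodule. Relatedly, your phrase ``descends to the quotient $\EKm$'' is a confusion of sub- and quotient-: as presented in part (5), $\EKm$ is realized as the subspace of $\ttt\widetilde V(\lambda)$ spanned by the $F_\nu\tilde v_\lambda$, not as a quotient.

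Two further points, less serious. For (1)(c) you propose an induction on height using the commutator relation; this is needlessly indirect, since $E_i(u\tilde v_\lambda)=e'_i(u)\tilde v_\lambda$ and Kashiwara's result already gives that $e'_i(u)=0$ for all $i$ forces $u\in\Qq$, so (1)(c) holds on the entire free module. For (3), ``verify well-definedness by checking compatibility with the defining relations'' glosses over a real subtlety: one must either show that the recursive prescription closes up (which needs a basis or a universal construction), or transport Kashiwara's nondegenerate form from $\ff$ along the identification $\EKm\cong\ff v_\lambda$; the latter is what the cited reference does.
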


\begin{proof}
See \cite[Proposition 2.11, Lemma 2.15]{Enomoto-Kashiwara-08}. 
\end{proof}

From now on, let us identify $\ff$ with the subalgebra of $\EK$ generated by the $F_i$'s. 
Note that it follows from Proposition \ref{pro:EKm} that $\EKm = \ff \cdot v_\lambda$. 
The module $\EKm$ has a $\mathsf{P}^\theta$-grading: 
\[
\EKm = \bigoplus_{\mu \in P^\theta} \EKm_\mu, 
\]
where $\EKm_\mu = \{ v \in \EKm \mid T_i \cdot v = q^{\mu \cdot i} u\}$. If $v \in \EKm_\mu$, write $\mu_v := \mu$ and $\ttt\norm{v} = \mu_v$. 
The intergral and dual integral forms are defined as $\EKmil = \fint v_\lambda$ and $\EKmiu = \{v \in \EKm \mid (\EKmil, v ) \in \Ql \}$, respectively.

The operators $E_i$ satisfy a kind of ``twisted derivation" property. 

\begin{lemma} \label{lem:qder1}
We have 
\[
E_iy \cdot v = q^{-i\cdot\norm{y}} yE_i \cdot v + (e'_i(y) + q^{-i \cdot \norm{e_{\theta(i)}^*(y)}} e_{\theta(i)}^*(y)T_i) \cdot v
\]
for any $y \in \ff$ and $v \in \EKm$. 
\end{lemma}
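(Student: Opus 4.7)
The plan is to prove the identity by induction on the length of $y$, understood as the height of the weight $\norm{y} \in \mathsf{Q}_+$ or, equivalently, the length of any monomial expression $y = F_{\nu_1} \cdots F_{\nu_n}$ in the $F$-generators of $\ff \subseteq \EK$. Since the action of $\ff$ on $\EKm$ is by restriction of the $\EK$-action, associativity lets us treat $y \cdot v$ unambiguously as iterated application of the $F_{\nu_k}$'s.

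For the base cases, the statement for $y = 1$ is immediate from $e'_i(1) = 0 = e^*_{\theta(i)}(1)$. For $y = F_j$, the defining commutation
\[ E_iF_j = q^{-i\cdot j}F_jE_i + \delta_{ij} + \delta_{\theta(i),j}T_i \]
produces the left-hand side directly, and it coincides with the right-hand side of the lemma upon using $e'_i(f_j) = \delta_{ij}$, $e^*_{\theta(i)}(f_j) = \delta_{\theta(i),j}$ and $\norm{1} = 0$.

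For the inductive step, I would write $y = F_j y'$ with the length of $y'$ one less. Applying the commutation above to $E_i F_j$ splits
\[ E_i(y \cdot v) = q^{-i \cdot j} F_j\, E_i(y' \cdot v) + \delta_{ij}\, y' \cdot v + \delta_{\theta(i),j}\, T_i y' \cdot v, \]
and then the inductive hypothesis is fed into $E_i(y' \cdot v)$. The five resulting summands must be matched termwise with the right-hand side of the lemma, whose correction terms are expanded via the twisted Leibniz rules of Section 3.2,
\[ e'_i(f_j y') = \delta_{ij} y' + q^{-i \cdot j} f_j e'_i(y'), \qquad e^*_{\theta(i)}(f_j y') = q^{-\theta(i) \cdot \norm{y'}} \delta_{\theta(i),j} y' + f_j e^*_{\theta(i)}(y'). \]

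The hardest part will be aligning the $q$-prefactors of the $T_i$-contributions. Two auxiliary facts from the defining relations of $\EK$ are needed: the commutation $T_i F_j = q^{-(i+\theta(i)) \cdot j} F_j T_i$ (which, applied inductively, yields $T_i y' = q^{-(i+\theta(i)) \cdot \norm{y'}} y' T_i$), and the weight shift $\norm{e^*_{\theta(i)}(y')} = \norm{y'} - \theta(i)$. The delicate step is reconciling the term $\delta_{\theta(i),j}\, T_i y' \cdot v$ produced on the left with $\delta_{\theta(i),j}\, q^{-i \cdot (\norm{y} - \theta(i))}\, y' T_i \cdot v$ on the right: on the support of the delta one has $j = \theta(i)$, so $i \cdot j = i \cdot \theta(i)$, and the accumulated $q$-exponents cancel exactly. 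All the remaining terms line up routinely via the same Leibniz identities.
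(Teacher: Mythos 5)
Your argument is correct. Note, however, that the paper does not actually prove this lemma: it merely cites \cite[Lemma 2.9]{Enomoto-Kashiwara-08}, so your induction on the length of a monomial $y = F_{\nu_1}\cdots F_{\nu_n}$ supplies the verification that the cited source carries out. The decomposition $y = F_j y'$, the defining relation $E_iF_j = q^{-i\cdot j}F_jE_i + \delta_{ij} + \delta_{\theta(i),j}T_i$, and the twisted Leibniz rules for $e'_i$ and $e^*_{\theta(i)}$ are exactly the right ingredients, and the five summands do match.

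One small slip in the write-up: when you display the $T_i$-contribution coming from the right-hand side as $\delta_{\theta(i),j}\,q^{-i\cdot(\norm{y}-\theta(i))}\,y'\,T_i\cdot v$, you have dropped the factor $q^{-\theta(i)\cdot\norm{y'}}$ produced by your own Leibniz expansion $e^*_{\theta(i)}(f_jy') = q^{-\theta(i)\cdot\norm{y'}}\delta_{\theta(i),j}y' + f_je^*_{\theta(i)}(y')$. With that factor restored, the exponent on the right is $-i\cdot(\norm{y}-\theta(i)) - \theta(i)\cdot\norm{y'}$, which, on the support of $\delta_{\theta(i),j}$ where $\norm{y} - \theta(i) = \norm{y'}$, equals $-(i+\theta(i))\cdot\norm{y'}$; this is exactly what commuting $T_i$ past $y'$ produces on the left. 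So the cancellation you invoke does go through, but only once the omitted factor is reinstated.
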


\begin{proof}
This is \cite[Lemma 2.9]{Enomoto-Kashiwara-08}. 
\end{proof}

\subsection{Quantum shuffle algebra}

The \emph{quantum shuffle algebra} $\FF$ is the $\mathsf{Q}$-graded $\Qq$-algebra with basis $\wor$, where $\deg_{\mathsf{Q}} \nu = -\norm{\nu}$, and multiplication given by 
\eq \label{shufflealgform}
\nu \circ \nu' = \sum_{w \in \coset{\Norm{\beta}}{\Norm{\beta'}}} q^{-d(\nu,\nu',w)} w \cdot \nu \nu'
\eneq
for $\nu \in J^{\beta}$, $\nu' \in J^{\beta'}$, where $\nu\nu' = i_1 \cdots i_{\Norm{\beta+\beta'}}$ and 
\eq \label{shufflealgformgr}
d(\nu,\nu',w) = \sum_{\substack{k \leq \Norm{\beta} < l,\\ w(k) > w(l)}} i_{w^{-1}(k)} \cdot i_{w^{-1}(l)}. 
\eneq

To $\nu = i_1 \hdots i_k \in \wor$ one associates the $q$-derivation $\partial_\nu = e_{i_1}^* \hdots e_{i_k}^* \in \End_{\Qq}( \ff )$. There is a $\Qq$-linear map
\eq \label{Psi map}
\Psi \colon \ff \longrightarrow \FF, \quad \Psi(u) = \sum_{\nu \in \wor, \norm{\nu} = \norm{u}} \partial_{\nu}(u) \cdot \nu
\eneq
for a homogeneous element $u \in \ff$. 

Let $\ei, \eis \in \End_{\Qq}(\FF)$ be the left and right deletion operators:  
\[
\ei(i_1 \cdots i_k) = \delta_{i,i_1} i_2 \cdots i_k, \quad \eis (i_1 \cdots i_k) = \delta_{i,i_k} i_1 \cdots i_{k-1}, \quad \ei(\varnothing) = \eis(\varnothing) = 0, 
\]
respectively.

\begin{prop}
The map \eqref{Psi map} is an injective $\mathsf{Q}$-graded algebra homomorphism satisfying 
\[
\ei \circ \Psi = \Psi \circ e'_i, \quad \eis \circ \Psi = \Psi \circ e^*_i. 
\]
\end{prop}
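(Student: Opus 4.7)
The plan is to check, in order, the grading, the two intertwining relations, the algebra-homomorphism property, and finally injectivity. The grading assertion is immediate from the definition of $\Psi$: the operator $\partial_\nu$ shifts weight by $\norm{\nu}$, so each summand $\partial_\nu(u)\cdot\nu$ lies in $\mathsf{Q}$-degree $-\norm{u}$ exactly under the constraint $\norm{\nu} = \norm{u}$.

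For the intertwining $\eis \circ \Psi = \Psi \circ e^*_i$, I would unwind the definitions: writing a word ending in $i$ as $\nu = \nu' \cdot i$ and using $\partial_{\nu' i} = \partial_{\nu'} \circ e^*_i$ gives $\eis\Psi(u) = \sum_{\nu'} \partial_{\nu'}(e^*_i(u))\,\nu' = \Psi(e^*_i(u))$. For $\ei \circ \Psi = \Psi \circ e'_i$, writing $\nu = i \cdot \mu$ and reindexing gives $\ei\Psi(u) = \sum_\mu e^*_i(\partial_\mu(u))\,\mu$, so it suffices to show $e^*_i(\partial_\mu(u)) = \partial_\mu(e'_i(u))$ whenever $\norm{\mu} + \alpha_i = \norm{u}$. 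Under this weight constraint $\partial_\mu(u)$ lies in the one-dimensional subspace $\Qq f_i$, on which $e^*_i$ and $e'_i$ agree. It then remains to prove $[e'_i, e^*_j] = 0$, which follows by induction on weight from the two twisted Leibniz rules: the two mixed terms produced by applying both derivations to a product $uv$ carry $q$-coefficients $q^{-\alpha_j \cdot \norm{v} - \alpha_i \cdot (\norm{u}-\alpha_j)}$ and $q^{-\alpha_i \cdot \norm{u} - \alpha_j \cdot (\norm{v}-\alpha_i)}$, which coincide by symmetry of the bilinear form.

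For multiplicativity $\Psi(uv) = \Psi(u) \circ \Psi(v)$, I would iterate the twisted Leibniz rule for $e^*_i$ to obtain a shuffle-Leibniz identity
\[
\partial_\nu(uv) \;=\; \sum_{(\alpha,\beta)} q^{-d(\alpha,\beta,w)}\, \partial_\alpha(u)\,\partial_\beta(v),
\]
summed over ways of expressing $\nu$ as a shuffle of subwords $\alpha$ and $\beta$ (indexed by $w\in\coset{\Norm{\alpha}}{\Norm{\beta}}$), with exponent $d$ matching \eqref{shufflealgformgr}; comparison with \eqref{shufflealgform} then yields the claim. Injectivity is finally an induction on $\norm{u}$: if a homogeneous $u \neq 0$ has positive weight, then non-degeneracy of the bilinear form on $\ff$ (transported via the order-reversing anti-automorphism fixing each $f_i$) produces some $i$ with $e^*_i(u) \neq 0$; by induction $\Psi(e^*_i(u)) \neq 0$, and the previously-established intertwining gives $\eis \Psi(u) \neq 0$, forcing $\Psi(u) \neq 0$. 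The principal obstacle is the combinatorial bookkeeping in the shuffle-Leibniz identity---carefully matching the $q$-twists accumulated via the iterated Leibniz rule with the exponent $d(\alpha,\beta,w)$ from \eqref{shufflealgformgr}; everything else is either immediate or reduces to the symmetry of the bilinear form.
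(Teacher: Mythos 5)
Your proof is correct, and it effectively reproves the result that the paper simply cites from Leclerc (Lemma~3 and Theorem~4 of \cite{Leclerc-04}): the paper gives no argument beyond that citation plus the remark that ``the proof for left deletions is analogous.'' Your right-deletion intertwining is exactly Leclerc's, using $\partial_{\nu' i}=\partial_{\nu'}\circ e_i^*$, and the multiplicativity is Leclerc's Theorem~4, obtained by iterating the twisted Leibniz rule and matching the accumulated $q$-exponents with $d(\alpha,\beta,w)$ from \eqref{shufflealgformgr}; you correctly note that under the weight constraint $\norm{\nu}=\norm{u}+\norm{v}$ the only surviving shuffle decompositions are those with $\norm{\alpha}=\norm{u}$ and $\norm{\beta}=\norm{v}$, so everything collapses to scalars.

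The one place you genuinely depart from ``analogous'' is the left-deletion intertwining $\ei\circ\Psi=\Psi\circ e_i'$. Rather than redoing the whole argument with the roles of left and right factors exchanged (or conjugating by $\sigma$), you observe that $\partial_\mu(u)$ lands in the one-dimensional space $\Qq f_i$ where $e_i^*$ and $e_i'$ coincide, and then push $e_i'$ through $\partial_\mu$ using Kashiwara's commutation $[e_i',e_j^*]=0$, which you verify by the same weight-induction that underlies the Leibniz rule (the two mixed $q$-coefficients agree by symmetry of $\cdot$). This is cleaner than a verbatim re-derivation and is a nice reduction. Your injectivity argument is also standard and correct: $e_i^*$ cannot all kill a nonzero positive-weight element (by non-degeneracy of the form together with the anti-automorphism $\sigma$ swapping $e_i'$ and $e_i^*$), and then the right-deletion intertwining propagates nonvanishing of $\Psi$ downward by induction on weight. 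No gaps.
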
 

\begin{proof} 
This follows directly from \cite[Lemma 3, Theorem 4]{Leclerc-04}. 
The proof for left deletions is analogous. 
\end{proof} 

We will now consider some anti-automorphisms of $\ff$ and $\FF$. Set 
\begin{equation} \label{eq:twinv}
\sigma \colon \wor \to \wor, \quad \nu \mapsto w_0(\nu), \quad \quad \ttt\sigma \colon \wor \to \wor, \quad \nu \mapsto \ttt w(\nu). 
\end{equation} 
We extend these maps to $\Qq$-linear maps $\sigma \colon \FF \to \FF$ and $\ttt\sigma \colon \FF \to \FF$. We use the same symbols to denote the $\Qq$-linear maps 
\[ \sigma \colon \ff \to \ff, \quad f_\nu \mapsto f_{\sigma(\nu)}, \quad \quad \ttt\sigma \colon \ff \to \ff, \quad f_\nu \mapsto f_{\ttt\sigma(\nu)}, \] 
respectively.  
\begin{lemma}
The maps $\sigma$ and $\ttt\sigma$ are algebra anti-automorphisms satisfying $\sigma \circ \Psi = \Psi \circ \sigma$   and $\ttt\sigma \circ \Psi = \Psi \circ \ttt\sigma$, respectively. 
\end{lemma}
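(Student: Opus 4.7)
The plan is first to establish that $\sigma$ and $\ttt\sigma$ are genuine anti-automorphisms of both $\ff$ and $\FF$, and then to exploit the algebra homomorphism property of $\Psi$ (from the preceding proposition) to reduce the compatibility statements to a trivial check on the algebra generators $\{f_i\}_{i\in J}$.

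For the anti-automorphism property on $\ff$, I would verify that the $q$-Serre relations are preserved: applying $\sigma$ to the relation $\sum_{k+l=1-i\cdot j}(-1)^k f_i^{(k)} f_j f_i^{(l)} = 0$ yields the same relation with $k$ and $l$ interchanged, which coincides with the original (up to a global sign of $\pm 1$ when $1-i\cdot j$ is odd). The corresponding statement for $\ttt\sigma$ then follows by writing $\ttt\sigma = \theta \circ \sigma$, where $\theta$ acts on $\ff$ by the algebra automorphism $f_i \mapsto f_{\theta(i)}$ (well-defined because $\theta$ preserves the Cartan pairing $i \cdot j$, and hence permutes the set of $q$-Serre relations). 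For the anti-automorphism property on $\FF$, I would argue directly from the shuffle formula \eqref{shufflealgform} via the bijection
\[
\coset{\Norm{\nu}}{\Norm{\mu}} \isoto \coset{\Norm{\mu}}{\Norm{\nu}}, \qquad w \longmapsto w_0 w w_0,
\]
where $w_0$ denotes the longest element of $\sym_{\Norm{\nu}+\Norm{\mu}}$. The underlying words in $\sigma(\nu \circ \mu)$ and $\sigma(\mu) \circ \sigma(\nu)$ agree because $\sigma(\mu)\sigma(\nu) = w_0 \cdot (\nu\mu)$ and $(w_0 w w_0) w_0 = w_0 w$, while the symmetry of the bilinear form yields $d(\nu,\mu,w) = d(\sigma(\mu),\sigma(\nu), w_0 w w_0)$, matching the $q$-exponents. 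The case of $\ttt\sigma$ reduces to that of $\sigma$ upon noting that the letterwise action of $\theta$ on words extends to an algebra automorphism of $\FF$ (by $\theta$-invariance of $d$) which commutes with $\sigma$.

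Having established these anti-automorphism properties, both $\sigma \circ \Psi$ and $\Psi \circ \sigma$ are anti-algebra maps $\ff \to \FF$, as are the corresponding pair for $\ttt\sigma$; since two such maps agreeing on a generating set agree everywhere, it suffices to check on the $f_i$. The verifications are immediate: $\sigma(\Psi(f_i)) = \sigma(i) = i = \Psi(f_i) = \Psi(\sigma(f_i))$, and analogously $\ttt\sigma(\Psi(f_i)) = \ttt\sigma(i) = \theta(i) = \Psi(f_{\theta(i)}) = \Psi(\ttt\sigma(f_i))$. The main obstacle is the shuffle-algebra portion of the second step -- specifically, the careful tracking of the $q$-exponent $d$ under the coset bijection $w \mapsto w_0 w w_0$ -- as the rest of the argument is essentially formal.
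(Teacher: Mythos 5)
Your proposal is correct and reconstructs precisely the content that the paper delegates to \cite[Proposition 6]{Leclerc-04} for $\sigma$ and to a direct check of \eqref{shufflealgform}--\eqref{shufflealgformgr} for $\ttt\sigma$; the strategy (anti-automorphism property on $\ff$ via the Serre relations, on $\FF$ via the coset bijection $w \mapsto w_0ww_0$, then reduction of the intertwining to a check on the generators $f_i$ using that $\Psi$ is an algebra homomorphism) is exactly the expected one. Your factorization $\ttt\sigma = \theta \circ \sigma$ through the letterwise $\theta$-automorphism of $\ff$ and $\FF$ is a clean and correct shortcut for the second case, and the exponent identity $d(\nu,\mu,w) = d(\sigma(\mu),\sigma(\nu),w_0ww_0)$ that you flag as the technical crux does check out via the substitution $(k',l')=(n-l+1,n-k+1)$ together with symmetry of the Cartan form.
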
 

\begin{proof}
The case of $\sigma$ is \cite[Proposition 6]{Leclerc-04}. The case of $\ttt\sigma$ follows easily from \eqref{shufflealgform} and \eqref{shufflealgformgr}. 
\end{proof}

\subsection{Quantum shuffle module} 

We will now realize the modules $\EKm$ in terms of modules over the shuffle algebra. 

\begin{definition} \label{def: q shuffle module}
We define the \emph{quantum shuffle module} $\tFF$ to be the $\mathsf{P}^\theta$-graded $\Qq$-vector space with basis $\twor$, where $\deg_{\mathsf{P}^\theta} \nu = \ttt\lambda - \ttt\norm{\nu}$, and a right $\FF$-action given by 
\eq \label{sh action} \nu \acts \nu' = \sum_{w \in \tcoset{\Norm{\beta}_\theta}{\Norm{\beta'}}} q^{-d(\nu,\nu',w)} w \cdot \nu\nu' \eneq
for $\nu \in \comp$, $\nu' \in J^{\beta'}$, where 
\[
d(\nu,\nu',w) = \sum_{\substack{1\leq k < l \leq N ,\\ w(k) > w(l)}} i_{w^{-1}(k)} \cdot i_{w^{-1}(l)} + 
\sum_{\substack{1\leq k < l \leq N ,\\ w(-k) > w(l)}} i_{w^{-1}(-  k)} \cdot i_{w^{-1}(l)}
- \sum_{\substack{\Norm{\beta}_\theta < l,\\ w(l) < w(-l)}} \ttt\lambda \cdot i_l, 
\]
with $N = \Norm{\beta}_\theta + \Norm{\beta'}$. 
\end{definition}

\begin{remark}
We have chosen to define $\EKm$ as a left $\EK$-module but $\tFF$ as a right $\FF$-module. This choice is a compromise. On the one hand, we wanted to be consistent with the conventions of \cite{Enomoto-Kashiwara-06, Enomoto-Kashiwara-08}. On the other hand, as shown in \cite{Appel-Przezdziecki-21}, $\EKm$ can be categorified via quantum symmetric pairs, which are, by convention (see, e.g., \cite{kolb-14}), right coideal subalgebras. 
\end{remark}

Let $\mathbf{E}_i \in \End_{\Qq}(\tFF)$ be the right deletion operator: 
\[
\mathbf{E}_i(i_1\cdots i_k) = \delta_{i,i_k} i_1\cdots i_{k-1}, \quad \mathbf{E}_i(\varnothing) = 0. 
\]

\begin{lemma} 
\label{lem:qder2}
Formula \eqref{sh action} defines a right $\FF$-action on $\tFF$. 
Moreover, the endomorphisms $\Ei$ satisfy 
\[
\Ei( v \acts z) = q^{-i\cdot\norm{z}} \Ei(v) \acts z + v \acts \eis(z) + q^{-i \cdot \norm{\mathbf{e}_{\theta(i)}'(z)}+\mu_v\cdot i} v \acts \mathbf{e}'_{\theta(i)}(z). 
\]
\end{lemma}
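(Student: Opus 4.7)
The plan is to verify both claims by analysing the coset combinatorics underlying formula \eqref{sh action}, following the template used for the ordinary shuffle algebra $\FF$.

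For the right $\FF$-action property (the unit being trivial), fix $v \in \comp$ with $a = \Norm{\beta}_\theta$, and $z_1 \in J^{\gamma_1}$, $z_2 \in J^{\gamma_2}$ with $b_j = \Norm{\gamma_j}$. The identity $(v \acts z_1) \acts z_2 = v \acts (z_1 \circ z_2)$ reduces to the coset factorisation
\[
\tcoset{a+b_1}{b_2} \cdot \bigl(\tcoset{a}{b_1} \times 1_{b_2}\bigr) \;=\; \tcoset{a}{b_1+b_2} \cdot \bigl(1_a \times \coset{b_1}{b_2}\bigr)
\]
inside $\weyl_{a+b_1+b_2}$, together with the additivity of both $\ell$ and the statistic $d$ under this factorisation of a fixed element into coset representatives. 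This is the type-$\mathtt{B}$ analogue of the standard associativity argument for $\FF$ (cf.~\cite{Leclerc-04,Kleshchev-Ram-11}); the only new ingredient is verifying that the signed inversions and the framing correction in $d$ split correctly, which is a direct calculation.

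For the twisted derivation rule, I would expand
\[
\mathbf{E}_i(v \acts z) \;=\; \sum_{w \in \tcoset{a}{b}} q^{-d(v,z,w)}\, \mathbf{E}_i\bigl(w \cdot (vz)\bigr), \qquad a := \Norm{v}_\theta,\ b := \Norm{z},\ N := a+b,
\]
and split the sum according to the value of $w^{-1}(N)$. The characterisation of $\tcoset{a}{b}$ as the set of $w \in \weyl_N$ satisfying $0 < w(1) < \cdots < w(a)$ and $w(a+1) < \cdots < w(N)$ forces $w^{-1}(N) \in \{a,\,N,\,-(a+1)\}$. With the convention $\nu_{-l} := \theta(\nu_l)$, the last letter of $w \cdot (vz)$ in the three cases equals $v_a$, $z_b$, or $\theta(z_1)$ respectively, so $\mathbf{E}_i$ selects precisely those $w$ for which this letter is $i$. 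The natural bijections
\[
\{w : w(a) = N\} \leftrightarrow \tcoset{a-1}{b}, \quad \{w : w(N) = N\} \leftrightarrow \tcoset{a}{b-1}, \quad \{w : w(a+1) = -N\} \leftrightarrow \tcoset{a}{b-1}
\]
then allow one to rewrite the three subsums as $q^{-i\cdot\norm{z}}\,\Ei(v)\acts z$, $\ v \acts \eis(z)$, and $q^{-i\cdot\norm{\mathbf{e}'_{\theta(i)}(z)}+\mu_v\cdot i}\, v\acts\mathbf{e}'_{\theta(i)}(z)$, respectively.

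The main obstacle is tracking the statistic $d$ through these bijections. Case~(A) is routine: since $w(a) = N$ exceeds every $w(l)$ for $l > a$, the inversions $(a,l)$ contribute $\sum_{l=a+1}^{N} i \cdot (vz)_l = i \cdot \norm{z}$ and nothing else changes. Case~(B) produces no new inversions whatsoever. Case~(C) is the genuinely new feature compared with $\FF$: the sign flip $w(a+1) = -N$ generates a collection of signed inversions tracked by the second sum in $d$, and simultaneously activates the framing term $-\ttt\lambda \cdot z_1 = -\ttt\lambda \cdot \theta(i)$ from the third sum, which contributes $+\ttt\lambda \cdot i$ to the exponent $-d$ (using $\ttt\lambda \in \mathsf{P}^\theta$). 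Combining these contributions with the identity $\mu_v = \ttt\lambda - \ttt\norm{v}$ collapses the total to the required exponent $-i \cdot \norm{\mathbf{e}'_{\theta(i)}(z)} + \mu_v \cdot i$.
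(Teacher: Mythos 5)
Your proof is correct and takes essentially the same approach as the paper: the paper writes $v = \nu j$, $z = k\mu l$ and decomposes $v \acts z$ according to the last letter of the resulting word, which is precisely your split of $\tcoset{a}{b}$ by $w^{-1}(N) \in \{a,\,N,\,-(a+1)\}$, with the same resulting exponents. The paper omits the verification that \eqref{sh action} is a right action (calling it immediate), while you sketch the coset-factorization argument, but the substance of the derivation is identical.
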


\begin{proof}
The first statement follows easily from the definitions, so we omit a proof. Let us prove the second statement. It suffices to consider $v$ and $z$ of the form $v = \nu j$ and $z = k\mu l$, for $\nu \in \twor$, $\mu \in \wor$ and $j,k,l \in J$.  Then \eqref{sh action} implies 
\[ v \acts z = \nu j \acts k\mu l = (v \acts k\mu)l + q^{-d(v,z,w)}(\nu \acts z)j + q^{-d(v,z,w')}(v \acts \mu l)\theta(k),\] 
where $w$ transposes $j$ and $z$ while $w'$ sends $k$ to $\theta(k)$ and transposes it with $\mu l$. 
One easily sees that $d(v,z,w) = j\cdot \norm{z}$ and $d(v,z,w') = \theta(k) \cdot \norm{\mathbf{e}_{k}'(z)} - \mu_v(\theta(k))$. Hence 
\[
\Ei( v \acts z) = \delta_{i,l} (v \acts k\mu) + \delta_{i,j}q^{-i\cdot\norm{z}}(\nu \acts z) + \delta_{i,\theta(k)}q^{-i \cdot \norm{\mathbf{e}_{\theta(i)}'(z)}+\mu_v\cdot i}(v \acts \mu l). 
\] 
The statement follows. 
\end{proof}

To $\nu = \nu_1 \cdots \nu_k \in \wor$ one associates the operator ${}^\theta\partial_\nu = E_{\nu_1} \cdots E_{\nu_k} \in \End( \EKm )$. There is a $\Qq$-linear map 
\eq \label{tPsi map}
{}^\theta\Psi \colon \EKm \longrightarrow \tFF, \quad \ttt\Psi(u) = \sum_{\nu \in \twor, \ttt\norm{\nu} = \ttt\norm{u}} \ttt\partial_{\nu}(u) \cdot \sigma(\nu)
\eneq
for a homogeneous element $u \in \EKm$. 
Let us abbreviate 
\[ \mathbf{U} = \Psi(\ff), \quad \EKM(\lambda) = \ttt\Psi(\EKm).\]

\begin{prop} \label{pro:shufflemodulereal}
The map \eqref{tPsi map} is injective, $\mathbf{E}_i \circ \ttt\Psi = \ttt\Psi \circ E_i$, and the diagram
\[ \begin{tikzcd}[ row sep = 0.2cm]
\ff \arrow[r, "\Psi"]  & \FF  \\
\curvearrowright & \curvearrowleft \\
\EKm \arrow[r,"\ttt\Psi"] & \tFF
\end{tikzcd} \]
commutes.   
\end{prop}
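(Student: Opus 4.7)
The statement has three parts: injectivity of $\ttt\Psi$, the intertwining $\Ei \circ \ttt\Psi = \ttt\Psi \circ E_i$, and the commutativity of the diagram, which I read as $\ttt\Psi(y \cdot v) = \ttt\Psi(v) \acts \Psi(y)$ for $y \in \ff$, $v \in \EKm$. The plan is to prove these, in this order, following the template of Leclerc's argument in the unoriented setting.

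For the intertwining with $\Ei$, I would unravel the formula \eqref{tPsi map} directly. The last letter of $\sigma(\nu)$ is $\nu_1$, so $\Ei \sigma(\nu)$ is nonzero precisely when $\nu = i \mu$, in which case it equals $\sigma(\mu)$. Combining this with the factorization $\ttt\partial_{i\mu} = E_i \circ \ttt\partial_\mu$ and reindexing over $\mu$, one matches the coefficient of each $\sigma(\mu)$ in $\Ei \ttt\Psi(u)$ with that in $\ttt\Psi(E_i u)$.

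For the commutativity of the diagram, I would induct on $\ell(y)$, reducing to the case $\ttt\Psi(F_i v) = \ttt\Psi(v) \acts i$, where $i = \Psi(f_i)$ is the single-letter word. The key is to characterize both sides of this identity by the recursions obtained by applying $\Ei[j]$ for all $j \in J$. On the right, Lemma \ref{lem:qder2} expands $\Ei[j](\ttt\Psi(v) \acts \Psi(y))$ into terms involving $\Ei[j] \ttt\Psi(v) \acts \Psi(y)$, $\ttt\Psi(v) \acts \eis[j] \Psi(y)$, and $\ttt\Psi(v) \acts \mathbf{e}'_{\theta(j)} \Psi(y)$; after applying Leclerc's intertwiners $\ei \circ \Psi = \Psi \circ e'_i$ and $\eis \circ \Psi = \Psi \circ e^*_i$, the right tensorands become $\Psi(e^*_j y)$ and $\Psi(e'_{\theta(j)} y)$. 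On the left, Lemma \ref{lem:qder1} gives a parallel expansion of $\ttt\Psi(E_j(yv))$ in terms of $\ttt\Psi(yE_jv)$, $\ttt\Psi(e'_j(y)v)$, and $\ttt\Psi(e^*_{\theta(j)}(y)T_jv)$. Using the intertwining from the previous paragraph, the $T_j$-eigenvalues of lower-weight pieces, and the inductive hypothesis applied to each summand, the two sides satisfy the same recursion. The base case $v = v_\lambda$ is direct from $E_j v_\lambda = 0$ and $T_iv_\lambda = q^{\ttt\lambda \cdot i}v_\lambda$.

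Finally, injectivity follows by induction on $\ttt\norm{v}$: if $\ttt\Psi(v) = 0$, then $\ttt\Psi(E_iv) = \Ei \ttt\Psi(v) = 0$ for every $i$, so inductively $E_iv = 0$ for every $i$; by Proposition \ref{pro:EKm}(1)(c), $v \in \Qq v_\lambda$, but $\ttt\Psi(v_\lambda) = \varnothing \neq 0$, forcing $v = 0$.

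The main obstacle lies in the second step. The two derivation rules a priori produce different derivations: $e'_j, e^*_{\theta(j)}$ applied to $y \in \ff$ versus $\eis[j], \mathbf{e}'_{\theta(j)}$ applied to $\Psi(y) \in \FF$. Their matching hinges on the identity $\sigma \circ e'_i = e^*_i \circ \sigma$ relating left and right $q$-derivations under the anti-automorphism $\sigma$---precisely the reason the word-reversal $\sigma$ enters the definition of $\ttt\Psi$. Verifying that all the $q$-exponents, in particular the weight-twists $\mu_v \cdot i$ and the contribution $-\ttt\lambda \cdot i_l$ from the shuffle coefficient, align consistently under this correspondence is the delicate bookkeeping at the heart of the argument.
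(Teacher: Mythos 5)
Your outline gives a correct statement, but the route is genuinely different from the paper's. The paper does not prove the commutative diagram by working directly with $\ttt\Psi$; it introduces an auxiliary map $\ttt\Psi'$ sending $y\cdot v_\lambda\mapsto\varnothing\acts\Psi(\sigma(y))$ for $y\in\ff$. Because $\Psi$ is an algebra homomorphism and $\sigma$ an anti-automorphism, verifying that $\ttt\Psi'$ intertwines the $\ff$-action with the $\FF$-action reduces to a single one-line shuffle computation, $\nu\acts i=\nu\circ i+q^{\ttt\lambda\cdot i-i\cdot\norm{\nu}}\,\theta(i)\circ\nu$, which mirrors formula \eqref{eq:Faction} inside the free realization of Proposition \ref{pro:EKm}(5). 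Lemmas \ref{lem:qder1} and \ref{lem:qder2} are then invoked once, to establish $\mathbf{E}_i\circ\ttt\Psi'=\ttt\Psi'\circ E_i$, and $\ttt\Psi=\ttt\Psi'$ follows by applying $\mathbf{E}_{\sigma(\nu)}$ to both sides and reading off coefficients; injectivity is read off from Proposition \ref{pro:EKm}(1)(c), as you do. Your plan instead attacks the square head-on via a double induction comparing the two derivation rules term by term under every $\mathbf{E}_j$. That is precisely the ``delicate bookkeeping'' you flag at the end, and it is exactly what the paper's auxiliary-map device is designed to avoid; it also duplicates effort, since the $\mathbf{E}_i$-intertwining you establish in step one is, in effect, re-derived inside the recursion of step two. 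A small warning on step one: because the $E_j$ do not commute, the reindexing identity $\ttt\partial_{i\mu}(u)=\ttt\partial_\mu(E_i u)$ that ``direct unraveling'' needs holds only with the right order of composition in $\ttt\partial_\nu$ --- the $E$-operator corresponding to the first letter of $\nu$ (the one exposed as the last letter of $\sigma(\nu)$ and deleted by $\mathbf{E}_i$) must be the one applied first --- so verify that the convention you adopt actually has this property before declaring the step immediate.
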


\begin{proof} 
The injectivity of $\ttt\Psi$ follows directly from part 1.c of Proposition \ref{pro:EKm}. 
Let $\ttt\Psi' \colon \EKm \to \tFF$ be the map sending $y\cdot v_\lambda \mapsto \varnothing \acts \Phi(\sigma(y))$ for $y \in \ff$. Note that $\ttt\Psi'$ is defined on all of $\EKm$ since $\EKm = \ff \cdot v_\lambda$. We claim that $\ttt\Psi'$ intertwines the actions of $\ff$ and $\FF$, and that 
$\ttt\Psi = \ttt\Psi'$. 
For the first claim, note that \eqref{sh action} implies that 
$\nu \acts i = \nu \circ i + q^{\ttt\lambda(i) - i\cdot \norm{v}} \theta(i) \circ \nu$, for $i \in J$ and $\nu \in \wor$. Hence, by part 5 of Proposition \ref{pro:EKm} and \eqref{eq:Faction}, the first claim follows. 
Lemma \ref{lem:qder1} and Lemma \ref{lem:qder2} imply that $\mathbf{E}_i \circ \ttt\Psi' = \ttt\Psi' \circ E_i$. 
Let $v \in \EKm$ be homogeneous and let $\nu \in \twor$ with $\ttt\norm{v} = \ttt\norm{\nu}$. Let $\gamma_\nu(v)$ be the coefficient of $\sigma(\nu)$ in $\ttt\Psi'(v)$. Then $\gamma_\nu(v) = \mathbf{E}_{\sigma(\nu)} \circ \ttt\Psi'(v) =  \ttt\partial_{\nu}(v)$. Hence $\ttt\Psi = \ttt\Psi'$, which completes the proof. 
\end{proof}

\subsection{$\theta$-good words}

We fix a total order on the set $J$ 
and equip $\wor$ with the corresponding anti-lexicographic order. Both are denoted by $\leq$. 
%Given $\nu \in \wor$, let $\minlet(\nu) = \min \{ \nu_i \mid 1 \leq i \leq \ell(\nu) \}$. 
Given a linear combination $u$ of words, let $\max(u)$ be the largest word appearing in $u$.

\begin{lemma} \label{lem:maxofshuffle}
If $\mu' \leq \mu$, $\nu' \leq \nu$ and $\ttt w(\nu') \leq \ttt w(\nu)$, for $\mu, \mu' \in \twor$ and $\nu, \nu' \in \wor$ (with $\Norm{\mu} = \Norm{\mu'}$ and $\Norm{\nu} = \Norm{\nu'}$), then $\max(\mu' \acts\nu') \leq \max(\mu \acts\nu)$. If any of the former three inequalities is strict, then the last fourth inequality is strict, too. 
\end{lemma}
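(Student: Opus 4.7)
The plan is to reduce the claim to a termwise comparison. Let $m = \Norm{\mu}$ and $n = \Norm{\nu}$, and write $N = m+n$. Since $\mu \acts \nu$ and $\mu' \acts \nu'$ are expansions over the same index set $\tcoset{m}{n}$ with nonzero $q$-power coefficients, it suffices to prove $w \cdot \mu'\nu' \leq w \cdot \mu\nu$ in anti-lex order for every $w \in \tcoset{m}{n}$. The strict part of the lemma then follows automatically: if any of the three hypotheses is strict then $\mu\nu \neq \mu'\nu'$, hence $w \cdot \mu\nu \neq w \cdot \mu'\nu'$ for every $w$ (the action of $w$ on $J^N$ is invertible), which upgrades $\leq$ to $<$ at every term and so for $\max$.

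To establish the termwise bound I would first give an explicit description of the shortest coset representatives. An element $w \in \tcoset{m}{n}$ is a signed permutation of $\{1,\ldots,N\}$ characterized by $0 < w(1) < \cdots < w(m)$ and by $w(m+1) < \cdots < w(N)$ (signed values increasing), and is thus parameterized by pairs $(S, \mathcal{N})$ with $S = \{w(1), \ldots, w(m)\}$ (the ``$\mu$-slots'') and $\mathcal{N} = \{|w(k)| : k > m,\ w(k) < 0\}$ (the ``negatively-signed $\nu$-slots''). A direct check shows that in $w \cdot \mu\nu$, reading left to right: the $\mu$-slots host $\mu_1, \ldots, \mu_m$ in order (no $\theta$ applied); the slots in $\mathcal{N}$ host $\theta(\nu_{|\mathcal{N}|}), \theta(\nu_{|\mathcal{N}|-1}), \ldots, \theta(\nu_1)$ in order; and the remaining slots host $\nu_{|\mathcal{N}|+1}, \ldots, \nu_n$ in order. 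Consequently $w \cdot \mu\nu$ and $w \cdot \mu'\nu'$ share the same slot-pattern and differ only in which primed or unprimed letters occupy the slots.

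Given this explicit shape, anti-lex comparison at the rightmost position $p^*$ where $w \cdot \mu\nu$ and $w \cdot \mu'\nu'$ disagree splits into three cases, depending on whether $p^*$ is a $\mu$-slot, a slot in $\mathcal{N}$, or a remaining slot. In each case, the agreement at all positions to the right of $p^*$ translates (after tracking how $\mu$, $\ttt w(\nu)$, and $\nu$ are distributed across the slots) into equality of an appropriate length-$k$ suffix of one of the pairs $(\mu, \mu')$, $(\ttt w(\nu), \ttt w(\nu'))$, or $(\nu, \nu')$; the corresponding hypothesis then yields the needed strict inequality of letters at $p^*$, via the elementary observation that for same-length words in anti-lex order, $u \leq v$ implies every length-$k$ suffix of $u$ is $\leq$ the corresponding suffix of $v$. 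The hypothesis $\ttt w(\nu') \leq \ttt w(\nu)$ is precisely what is required in the $\mathcal{N}$-slot case, where the letters $\theta(\nu_j)$ appear in the reading order of $\ttt w(\nu)$; it is indispensable because $\theta$ can reverse the natural order of the $\nu$-letters. The main technical obstacle is verifying the explicit shape of $w \cdot \mu\nu$ for shortest coset representatives; once that is in hand, the case analysis is routine.
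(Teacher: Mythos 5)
Your proposal is correct and follows the same route as the paper, which simply asserts that for each $w \in \tcoset{\Norm{\mu}_\theta}{\Norm{\nu}}$ the hypotheses force $w\cdot\mu'\nu' \leq w\cdot\mu\nu$; you have fleshed out that one-line termwise comparison with an explicit parametrization of the coset representatives by the pair $(S,\mathcal{N})$, the slot-by-slot description of $w\cdot\mu\nu$, and the resulting three-case analysis at the rightmost disagreement. One minor wording slip: agreement at all positions to the right of $p^*$ gives suffix equality for \emph{all three} pairs $(\mu,\mu')$, $(\nu,\nu')$, $(\ttt w(\nu),\ttt w(\nu'))$ simultaneously, not just for ``one of'' them — you then invoke the one relevant to the slot type of $p^*$ — but this does not affect the validity of the argument.
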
 

\begin{proof} 
If $w \in \tcoset{\Norm{\mu}_\theta}{\Norm{\nu}}$, then the condition in the hypothesis forces $w\cdot\mu'\nu'$ to be smaller than or equal to $w \cdot\mu\nu$. 
\end{proof}

A word $\nu \in \wor$ is called \emph{good} if $\nu = \max(\Psi(x))$ for some homogeneous $x \in \ff$.  
Let $\gooda$ denote the set of good words and $J_+^\beta = \gooda \cap J^\beta$. 
We now define the analogue of good words for quantum shuffle modules. 
\begin{definition}
A word $\nu \in \twor$ is called $\theta$\emph{-good} if $\nu = \max(\ttt\Psi(u))$ for some homogeneous $u \in \EKm$. 
Let $\tgooda$ denote the set of all $\theta$-good words and ${}^\theta J^\beta_+ = \tgooda \cap {}^\theta J^\beta$. 
\end{definition} 

In \cite{Leclerc-04}, a ``monomial" basis  $\{ \mathbf{m}_\nu = \Psi(f_{\sigma(\nu)}) \mid \nu \in \gooda \}$ of $\mathbf{U}$ was constructed. 
An analogous basis exists for $\EKmb$. 

\begin{lemma} \label{lem:monbas}
There is a unique basis of homogeneous vectors $\{ \ttt\mathbf{m}_\nu ^*\mid \nu \in \tgooda \}$ of $\EKmb$ 
such that $\mathbf{E}_{\mu}(\ttt\mathbf{m}_\nu) = \delta_{\mu,\nu}$ for any $\mu$ with $\tnorm{\mu} = \tnorm{\nu}$. 
The adjoint basis is $\{ \ttt\mathbf{m}_\nu = \ttt\Psi(F_{\sigma(\nu)} \cdot v_\lambda) \}$.  
\end{lemma}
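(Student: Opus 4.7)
The plan is to construct $\{\ttt\mathbf{m}_\nu^*\}$ from a triangularity property for the monomial vectors $\ttt\mathbf{m}_\nu$, paralleling Leclerc's treatment of the monomial basis of $\mathbf{U}$.

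First, I would rewrite the monomial vectors as shuffles. Applying Proposition \ref{pro:shufflemodulereal} --- specifically the identity $\ttt\Psi(y\cdot v_\lambda) = \varnothing \acts \Psi(\sigma(y))$ proved there --- to $y = F_{\sigma(\nu)}$ yields
\[
\ttt\mathbf{m}_\nu \;=\; \varnothing \acts \Psi(f_\nu).
\]
Combining the analogue of Leclerc's triangularity for $\Psi(f_\nu)$ in the shuffle algebra with Lemma \ref{lem:maxofshuffle} to propagate the leading term through the $\varnothing \acts (-)$ operation, and invoking the defining property of $\tgooda$ (that every word appearing as a max of $\ttt\Psi(u)$, $u \in \EKm$, is $\theta$-good), I would prove that for each $\nu \in \tgooda$,
\[
\ttt\mathbf{m}_\nu \;=\; c_\nu\,\nu \;+\; \sum_{\substack{\mu \in \twor \\ \mu < \nu}} c_{\mu,\nu}\, \mu, \qquad c_\nu \in \Qq^\times,
\]
the sum ranging over isotropic compositions of the same $\mathsf{P}^\theta$-weight as $\nu$.

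This triangularity yields linear independence of $\{\ttt\mathbf{m}_\nu\}_{\nu \in \tgooda}$ at once. For spanning, any nonzero $v \in \EKmb$ has $\max(v) \in \tgooda$ by definition; subtracting $c_{\max(v)}^{-1}\,[\text{coefficient of }\max(v)\text{ in }v]\cdot \ttt\mathbf{m}_{\max(v)}$ from $v$ strictly reduces its leading word, and iterating inside a fixed (finite-dimensional) weight space expresses $v$ as a $\Qq$-linear combination of the $\ttt\mathbf{m}_\nu$. Hence $\{\ttt\mathbf{m}_\nu\}_{\nu \in \tgooda}$ is a basis of $\EKmb$. The same triangularity shows that the map $\EKmb \to \bigoplus_{\mu \in \tgooda} \Qq$ sending $v$ to its word-coefficients at the $\mu$'s is a weight-by-weight isomorphism, so I can define $\ttt\mathbf{m}_\nu^*$ to be the preimage of the standard basis vector $\delta_\nu$. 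Extending the right-deletion operator by $\mathbf{E}_\mu = \mathbf{E}_{\mu_1}\cdots \mathbf{E}_{\mu_{\ell(\mu)}}$, a direct check from the single-letter formula shows that $\mathbf{E}_\mu$ simply extracts the $\mu$-coefficient of a homogeneous element of matching weight, so $\mathbf{E}_\mu(\ttt\mathbf{m}_\nu^*) = \delta_{\mu,\nu}$ by construction, with uniqueness automatic.

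Finally, the identification of $\{\ttt\mathbf{m}_\nu\}$ as the adjoint basis of $\{\ttt\mathbf{m}_\nu^*\}$ is obtained by transporting the adjunction $(E_iu,v) = (u,F_iv)$ of Proposition \ref{pro:EKm}(3) through $\ttt\Psi$ via $\mathbf{E}_i \circ \ttt\Psi = \ttt\Psi \circ E_i$, and comparing with the extraction identity $\mathbf{E}_\mu(\ttt\mathbf{m}_\nu^*)=\delta_{\mu,\nu}$. I expect the main obstacle to lie in the triangularity step: the shuffle $\varnothing \acts (-)$ sums over $\tcoset{0}{n} \cong (\Z/2\Z)^n$, introducing $\theta$-flips at arbitrary positions, which combine with the $\sym_n$-sum inside $\Psi(f_\nu)$ to generate a rich collection of new terms. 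One must verify that, after these combined flips and permutations, no non-leading term of $\Psi(f_\nu)$ overtakes $\nu$ in the anti-lexicographic order, and that the various contributions to the coefficient of $\nu$ do not conspire to cancel. Here the precise combinatorics of $\theta$-good words --- analogous to, but more delicate than, Leclerc's good words --- will have to be invoked in full.
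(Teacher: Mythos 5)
The triangularity you flag as the ``main obstacle'' is in fact false, and since both your linear-independence and spanning steps rest on it, this is a genuine gap. Already the inner factor $\Psi(f_\nu)$ is not triangular in your sense: for the good Lyndon word $\nu = \alpha_3\alpha_1$ one has
\[
\Psi(f_{\alpha_3}f_{\alpha_1}) = \alpha_3\circ\alpha_1 = \alpha_3\alpha_1 + q\,\alpha_1\alpha_3,
\]
whose largest word is $\alpha_1\alpha_3 > \alpha_3\alpha_1$; the operation $\varnothing\acts(-)$ preserves that overtaking term, so $\max(\ttt\mathbf{m}_{\alpha_3\alpha_1}) = \alpha_1\alpha_3 \neq \alpha_3\alpha_1$. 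In general $\max(\Psi(f_\nu))$ is a sorted rearrangement of the letters of $\nu$, not $\nu$ itself, and $\varnothing\acts(-)$ cannot repair this. Consequently the Gaussian-elimination step (``subtracting $\ttt\mathbf{m}_{\max(v)}$ strictly reduces the leading word'') is unjustified --- that subtraction can raise the leading word --- and the linear-independence of $\{\ttt\mathbf{m}_\nu\}_{\nu\in\tgooda}$ no longer follows from your premises either.

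The argument the paper has in mind, following Leclerc's Proposition 12, asserts no triangularity for $\ttt\mathbf{m}_\nu$ at all. Instead: within a fixed weight space of $\EKmb$ the coefficient-extraction map $u \mapsto (\mathbf{E}_\mu(u))_{\mu \in \tgood}$ is injective because $\max(u) \in \tgooda$ for every homogeneous $u\neq 0$ by the very definition of $\tgooda$; and for each $\mu\in\tgood$ one may choose $u^{(\mu)}\in\EKmb$ with $\max(u^{(\mu)})=\mu$, which are linearly independent by the usual leading-term argument, giving $|\tgood|$ bounded above by the dimension of the weight space. Hence the coefficient map is a bijection, and $\ttt\mathbf{m}_\nu^*$ is defined as the preimage of $\delta_\nu$; uniqueness and the basis property follow. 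Your first step (rewriting $\ttt\mathbf{m}_\nu = \varnothing\acts\Psi(f_\nu)$ via Proposition \ref{pro:shufflemodulereal}) and your last (transporting $(E_iu,v)=(u,F_iv)$ through $\ttt\Psi$ to obtain $(\ttt\mathbf{m}_\mu,\ttt\mathbf{m}_\nu^*)=\delta_{\mu\nu}$, whence $\{\ttt\mathbf{m}_\nu\}$ is the dual basis by nondegeneracy of the form) are both sound and should be kept; it is only the triangularity, and everything resting on it, that must be replaced.
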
 
\begin{proof}
The proof is completely analogous to the proof of \cite[Proposition 12]{Leclerc-04}. 
\end{proof}

Let $\mathcal{F}^{\mathrm{fr}}$ be the free associative $\Qq$-algebra generated by $J$ (with multiplication given by concatenation of letters) and let $V^{\mathrm{fr}}$ be its right regular representation. There is an algebra homomorphism 
\[ \Xi \colon \mathcal{F}^{\mathrm{fr}} \to \mathcal{F}, \quad \nu=\nu_1\cdots\nu_k \mapsto \nu_1 \circ \cdots \circ \nu_k = \Psi(f_{\nu}) \] 
and a linear map 
\[ \ttt\Xi_\lambda \colon V^{\mathrm{fr}} \to \EKmb, \quad \nu \mapsto \varnothing \acts \Xi(\nu) = \ttt\mathbf{m}_\nu.\] 
intertwining the actions of $\mathcal{F}^{\mathrm{fr}}$ and $\mathcal{F}$. 
We have the following characterization of $\theta$-good words.

\begin{lemma} \label{lem:tgood-equiv-char} 
The following are equivalent: 
\be
\item $\nu \in \twor$ is $\theta$-good, 
\item $\nu$ cannot be expressed modulo $\ker \ttt\Xi_\lambda$ as a linear combination of words $\mu > \nu$. 
\ee
\end{lemma}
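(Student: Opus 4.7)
The plan is to reformulate condition (2) via the monomial basis of Lemma \ref{lem:monbas} and argue both directions by strong downward induction on $\nu$ within each (finite-dimensional) graded piece of $\EKmb$. Since $\ttt\Xi_\lambda(\mu)=\ttt\mathbf{m}_\mu$, condition (2) becomes $\ttt\mathbf{m}_\nu\notin\mathrm{span}\{\ttt\mathbf{m}_\mu:\mu>\nu\}$ in $\EKmb$. The base case is the largest word $\mu_{\max}$ in the graded piece: a direct inspection of the shuffle formula $\ttt\mathbf{m}_{\mu_{\max}}=\varnothing\acts\Xi(\mu_{\max})$, isolating the identity-element contribution, gives $[\mu_{\max}]\ttt\mathbf{m}_{\mu_{\max}}\neq 0$, so $\max(\ttt\mathbf{m}_{\mu_{\max}})=\mu_{\max}$, forcing $\mu_{\max}\in\tgooda$ and verifying both (1) and (2) trivially.

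For the direction (1) $\Rightarrow$ (2), assume $\nu\in\tgooda$ and suppose for a contradiction that $\ttt\mathbf{m}_\nu=\sum_{\mu>\nu}c_\mu\ttt\mathbf{m}_\mu$. The inductive hypothesis applied to each $\mu>\nu$ with $\mu\notin\tgooda$ (the contrapositive of the reverse direction for larger words) expresses $\ttt\mathbf{m}_\mu$ as a combination of $\ttt\mathbf{m}_{\mu'}$ with $\mu'\in\tgooda$ and $\mu'>\mu$; after substituting, the sum runs only over $\mu\in\tgooda$ with $\mu>\nu$. Applying $\mathbf{E}_\nu$ and invoking Lemma \ref{lem:monbas}, which yields $\mathbf{E}_\nu(\ttt\mathbf{m}_\mu)=\delta_{\nu,\mu}$ for $\mu,\nu\in\tgooda$ of the same $\ttt$-norm, then gives $1=0$, a contradiction.

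For the direction (2) $\Rightarrow$ (1), I argue by contrapositive: assume $\nu\notin\tgooda$ and deduce $\ttt\mathbf{m}_\nu\in\mathrm{span}\{\ttt\mathbf{m}_\mu:\mu>\nu\}$. Since $\max(\ttt\mathbf{m}_\nu)\in\tgooda$ (by definition of $\tgooda$) and $[\nu]\ttt\mathbf{m}_\nu\neq 0$ while $\nu\notin\tgooda$, the leading word of $\ttt\mathbf{m}_\nu$ is some $\zeta_0\in\tgooda$ with $\zeta_0>\nu$. Iteratively subtracting a scalar multiple of $\ttt\mathbf{m}_{\zeta_0}$ to cancel the leading monomial produces a sequence of residuals in $\ttt\Psi(\EKmb)$, and the inductive hypothesis handles any intermediate residual whose maximum lies strictly above $\nu$. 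The main obstacle is to rule out the possibility that a residual has its maximum equal to some good word $\leq\nu$: the case "equal to $\nu$" is impossible because $\nu\notin\tgooda$, while the case "strictly below $\nu$" must be excluded by a careful analysis of $\ttt\mathbf{m}_\nu=\varnothing\acts\Xi(\nu)$ combined with the triangularity furnished by Lemma \ref{lem:monbas}, showing that the monomial-basis expansion of $\ttt\mathbf{m}_\nu$ is supported only on $\ttt\mathbf{m}_\zeta$ with $\zeta>\nu$.
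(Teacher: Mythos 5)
Your proof takes a genuinely different route from the paper's, and it contains gaps that I do not think can be closed in the form you set it up.

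The decisive problem is in $(2)\Rightarrow(1)$. Your leading-term cancellation subtracts $\ttt\mathbf{m}_{\zeta_j}$ to kill the current leading word $\zeta_j$, which tacitly assumes $\max(\ttt\mathbf{m}_{\zeta_j})=\zeta_j$ for $\zeta_j\in\tgooda$. But $\theta$-goodness of $\zeta_j$ only supplies \emph{some} $u\in\EKmb$ with $\max(u)=\zeta_j$; it does not say one may take $u=\ttt\mathbf{m}_{\zeta_j}$. Since $\ttt\mathbf{m}_{\zeta_j}=\varnothing\acts(\zeta_{j,1}\circ\cdots\circ\zeta_{j,k})$ is a $\theta$-shuffle of single letters with all coefficients positive sums of powers of $q$ (so there is no cancellation), its maximal word depends only on the multiset of letters of $\zeta_j$ and is the decreasing arrangement of their positive representatives. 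Already for $\zeta=\alpha_{-1}\alpha_1\in\tlyn\subseteq\tgooda$ one finds $\max(\ttt\mathbf{m}_{\alpha_{-1}\alpha_1})=\alpha_1\alpha_1>\alpha_{-1}\alpha_1$. Consequently, after the first subtraction (which does work, since $\zeta_0=\max(\ttt\mathbf{m}_\nu)$ happens to be the global maximum over the letter multiset) the residual has leading word $\zeta_1<\zeta_0$, and $\max(\ttt\mathbf{m}_{\zeta_1})=\zeta_0>\zeta_1$ again, so subtracting $\ttt\mathbf{m}_{\zeta_1}$ \emph{raises} the leading term rather than lowering it: the iteration never descends. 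The gap you explicitly defer (excluding residual maxima below $\nu$) never even becomes the issue. There is also a slip in $(1)\Rightarrow(2)$: Lemma~\ref{lem:monbas} does not say $\mathbf{E}_\nu(\ttt\mathbf{m}_\mu)=\delta_{\nu,\mu}$ --- the $\delta$-identity there characterizes the \emph{dual} monomial basis $\{\ttt\mathbf{m}_\mu^*\}$, while $\mathbf{E}_\nu(\ttt\mathbf{m}_\mu)=(\ttt\mathbf{m}_\mu,\ttt\mathbf{m}_\nu)$ is a Gram entry. That slip is repairable (linear independence of $\{\ttt\mathbf{m}_\mu\mid\mu\in\tgooda\}$ already gives the contradiction), but your reduction to $\theta$-good $\mu$ feeds off $(2)\Rightarrow(1)$ for larger words, which is exactly the broken direction.

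The paper's argument sidesteps the $\max(\ttt\mathbf{m}_\zeta)$ question entirely. For $(1)\Rightarrow(2)$ it takes a witness $u$ with $\mathbf{E}_\nu(u)\neq 0$ and, via adjointness, turns a hypothetical relation $\ttt\mathbf{m}_\nu=\sum_{\mu>\nu}c_\mu\ttt\mathbf{m}_\mu$ into $0\neq\mathbf{E}_\nu(u)=\sum_{\mu>\nu}c_\mu\mathbf{E}_\mu(u)$, forcing some word $\mu>\nu$ to occur in $u$ and contradicting the $\theta$-goodness of $\nu$. For $(2)\Rightarrow(1)$ it is a pure dimension count: letting $\tilde{\tgooda}$ be the set of words satisfying $(2)$, one checks $\{\ttt\mathbf{m}_\mu\mid\mu\in\tilde{\tgooda}\}$ is linearly independent (a relation would express the smallest participating word via larger ones, violating $(2)$), and since $\tgooda\subseteq\tilde{\tgooda}$ it contains the monomial basis, whence $\tilde{\tgooda}=\tgooda$. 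No induction, no iterative cancellation, and no need to control $\max(\ttt\mathbf{m}_\mu)$. I would adopt that argument.
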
   

\begin{proof} 
Let $u \in \EKmb$ and $\nu \in \twor$ satisfy $\tnorm{u} = \tnorm{\nu}$ and $\mathbf{E}_\nu(u) \neq 0$. Part (3) of Proposition \ref{pro:EKm} implies that 
$0 \neq (\mathbf{E}_\nu(u), \varnothing) = (u, \ttt\mathbf{m}_\nu)$. 
If $\nu$ could be expressed modulo $\ker \ttt\Xi_\lambda$ as a linear combination of words $\mu > \nu$, then there would exist a relation of the form 
\eq \label{eq:kerexpr}
 \ttt\mathbf{m}_\nu = \sum_{\mu > \nu} c_\mu \ttt\mathbf{m}_\mu 
\eneq
for some $c_\nu \in \Qq$. 
Hence 
\[
0 \neq \mathbf{E}_\nu(u) =  \sum_{\mu > \nu} c_\mu \mathbf{E}_\mu(u). 
\] 
Therefore, $\mathbf{E}_\mu(u) \neq 0$ for some $\mu > \nu$, which implies that $\mu$ is not $\theta$-good. This proves the implication $(1) \Rightarrow (2)$. 

Conversely, let $\tilde{\tgooda}$ be the set of words in $\twor$ satisfying (2). We have shown that $\tgooda \subseteq \tilde{\tgooda}$. Lemma \ref{lem:monbas} implies that the set $\{ \ttt\mathbf{m}_\nu \mid \nu \in \tilde{\tgooda} \}$ contains a basis of $\EKmb$. Moreover, it is linearly independent. Indeed, if there was a linear relation between words of $\tilde{\tgooda}$, one could express the smallest one in terms of the others and it would not belong to $\tilde{\tgooda}$. 
\end{proof} 

\begin{lemma} \label{lem:tgood-good}
The $\theta$-good words have the following properties. 
\be 
\item If $\nu$ is $\theta$-good and $\nu = \mu_1 \mu_2$, then $\mu_1$ is $\theta$-good. 
\item If $\nu$ is $\theta$-good then $\nu$ is good. 
\ee
\end{lemma}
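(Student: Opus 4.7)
Both parts are proved by contradiction, using Lemma \ref{lem:tgood-equiv-char} and its natural analogue for good words.

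For part (1), suppose $\mu_1$ is not $\theta$-good. Lemma \ref{lem:tgood-equiv-char} supplies coefficients $c_\mu \in \Qq$ with
\[ \mu_1 \equiv \sum_{\mu > \mu_1} c_\mu \, \mu \pmod{\ker \ttt\Xi_\lambda}, \]
where the sum runs over $\mu \in \twor$ of the same $\theta$-weight as $\mu_1$, and hence of the same length. The kernel $\ker\ttt\Xi_\lambda$ is stable under right multiplication by elements of $\mathcal{F}^{\mathrm{fr}}$, since $\ttt\Xi_\lambda(z \cdot w) = \ttt\Xi_\lambda(z) \acts \Xi(w)$ vanishes whenever $\ttt\Xi_\lambda(z)=0$. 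Right-multiplying the above relation by $\mu_2$ therefore yields
\[ \mu_1\mu_2 \equiv \sum_{\mu > \mu_1} c_\mu \, \mu\mu_2 \pmod{\ker\ttt\Xi_\lambda}. \]
Because $\mu$ and $\mu_1$ have equal length, $\mu > \mu_1$ in anti-lexicographic order implies $\mu\mu_2 > \mu_1\mu_2$ (the common suffix $\mu_2$ is inert under the right-to-left comparison). This contradicts Lemma \ref{lem:tgood-equiv-char} applied to the $\theta$-good word $\nu = \mu_1\mu_2$.

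For part (2), the definition of $\ttt\Xi_\lambda$ provides the factorization $\ttt\Xi_\lambda = (\varnothing\acts)\circ\Xi$, whence the inclusion of kernels $\ker \Xi \subseteq \ker \ttt\Xi_\lambda$ inside the common vector space $\mathcal{F}^{\mathrm{fr}} = V^{\mathrm{fr}}$. Next, the proof of Lemma \ref{lem:tgood-equiv-char} transfers verbatim after replacing $\ttt\Xi_\lambda$, $\EKmb$, $\ttt\mathbf{m}_\nu$ and Proposition \ref{pro:EKm}(3) by $\Xi$, $\mathbf{U}$, $\mathbf{m}_\nu = \Xi(\sigma(\nu))$ and the non-degenerate bilinear form on $\ff$; this yields the parallel characterization of good words: $\nu \in \wor$ is good iff $\nu \not\equiv \sum_{\mu > \nu} c_\mu \, \mu \pmod{\ker\Xi}$ for any scalars. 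If $\nu$ were $\theta$-good but not good, the latter characterization would furnish a relation modulo $\ker \Xi$ that, by the kernel inclusion, also holds modulo the larger ideal $\ker \ttt\Xi_\lambda$, contradicting Lemma \ref{lem:tgood-equiv-char}.

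No substantive obstacle is foreseen; the main care needed is in tracking that the relations from Lemma \ref{lem:tgood-equiv-char} live in a single weight space (thereby fixing the length of the $\mu$'s), and in verifying that the proof of that lemma is robust enough to reproduce as a characterization of good words for part (2).
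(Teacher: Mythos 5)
Your proof is correct, but the two parts fare differently against the paper's own argument.

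For part~(2), your approach is essentially identical to the paper's. The paper invokes \cite[Lemma~21]{Leclerc-04} for the characterization of good words (what you rederive as the $\Xi$-analogue of Lemma~\ref{lem:tgood-equiv-char}), then applies both sides of the resulting relation $\mathbf{m}_\nu = \sum_{\mu > \nu} c_\mu\,\mathbf{m}_\mu$ to $\varnothing$ to produce relation~\eqref{eq:kerexpr} in $\EKmb$. Your phrasing via the kernel inclusion $\ker\Xi \subseteq \ker\ttt\Xi_\lambda$, coming from the factorization $\ttt\Xi_\lambda = (\varnothing\acts) \circ \Xi$, is exactly the same mechanism dressed up slightly differently, and your observation that the proof of Lemma~\ref{lem:tgood-equiv-char} transfers verbatim is accurate.

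For part~(1), you take a genuinely different route. The paper's argument is shorter and more structural: it observes that $\EKmb$ is stable under the right-deletion operators $\mathbf{E}_i$ (by Proposition~\ref{pro:shufflemodulereal}), picks $u \in \EKmb$ with $\max(u) = \nu = \mu_1\mu_2$, and notes that $\max(\mathbf{E}_{\mu_2}(u)) = \mu_1$, so $\mu_1$ is the maximum of an element of $\EKmb$ and hence $\theta$-good. Your argument instead runs through Lemma~\ref{lem:tgood-equiv-char} contrapositively: a relation exhibiting $\mu_1$ as a combination of larger words modulo $\ker\ttt\Xi_\lambda$, right-multiplied by $\mu_2$ using the $\mathcal{F}^{\mathrm{fr}}$-equivariance of $\ttt\Xi_\lambda$, contradicts the $\theta$-goodness of $\nu$. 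Both arguments ultimately hinge on the same order-theoretic fact --- that for words of equal length with common suffix $\mu_2$, comparison of $\mu\mu_2$ and $\mu_1\mu_2$ reduces to comparison of $\mu$ and $\mu_1$ --- but the paper uses it to compute $\max(\mathbf{E}_{\mu_2}(u))$ while you use it to propagate the inequality $\mu > \mu_1$ under right concatenation. The paper's approach is tighter; yours requires the extra observation that $\ker\ttt\Xi_\lambda$ is a right submodule and a small bookkeeping step to restrict the relation from Lemma~\ref{lem:tgood-equiv-char} to a single $\theta$-weight component (which you flag correctly). Both are valid; yours also has the slight aesthetic advantage of running both parts through the same characterization lemma.
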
 

\begin{proof}
By Proposition \ref{pro:shufflemodulereal}, $\EKmb$ is stable under the operators $\mathbf{E}_i$. Pick $u \in \EKmb$ with $\max(u) = \nu$. Then $\max(\mathbf{E}_{\mu_2}(u)) = \mathbf{E}_{\mu_2}(\max(u)) = \mu_1$. This proves the first part. Next, suppose that $\nu$ is not good. Then, by \cite[Lemma 21]{Leclerc-04}, we have a relation of the form $\mathbf{m}_\nu = \sum_{\mu > \nu} c_\mu \mathbf{m}_\mu$. Applying both sides to $\varnothing$, we get the relation \eqref{eq:kerexpr}. Hence, by Lemma \ref{lem:tgood-equiv-char}, $\nu$ is not $\theta$-good. This proves the second part. 
\end{proof} 

\subsection{Lyndon words}

A nontrivial word $\nu \in \wor$ is called \emph{Lyndon} if it is smaller than all its proper left factors. Note that our definition uses the opposite of the convention of \cite{Leclerc-04, Kleshchev-Ram-11}, where right factors are used instead. 
Let $\mathcal{L}$ denote the set of Lyndon words and $\mathcal{L}_+ = \mathcal{L} \cap \gooda$ the set of good Lyndon words.

\begin{prop} \label{pro: lyndon}
Lyndon words have the following properties. 
\begin{enumerate}
\item Every word $\nu \in \wor$ has a unique factorization $\nu = \nu^{\lan k \ran}\cdots\nu^{\lan 1\ran}$ into Lyndon words such that $\nu^{\lan1\ran} \geq \cdots \geq \nu^{\lan k \ran}$. 
\item The word $\nu$ is good if and only if each $\nu^{\lan m \ran}$ is good. 
\item The map $\nu \mapsto \norm{\nu}$ yields a bijection $\Lyn \xrightarrow{\sim} \Phi^+$. The induced order on $\Phi^+$ is convex. 
\item Let $\mu \in \mathcal{L} \backslash J$ and write $\mu = \mu_{(1)}\mu_{(2)}$ with $\mu_{(2)}$ a proper Lyndon subword of maximal length. Then $\mu_{(1)} \in \mathcal{L}$. 
\end{enumerate}
\end{prop}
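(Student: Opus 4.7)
The plan is to reduce each part of the proposition to classical results. All four statements are standard facts about Lyndon words and good words in type $\mathtt{A}$, the only subtlety being that the conventions of the present paper (anti-lexicographic order, Lyndon defined via proper left factors rather than right) are the left-right mirror of those in \cite{Leclerc-04, Kleshchev-Ram-11}. To translate between the two, one uses the reversal involution $\sigma$ of \eqref{eq:twinv}, which, as shown earlier, is an algebra anti-isomorphism on both $\ff$ and $\FF$ intertwining $\Psi$ with itself.

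Parts (1) and (4) are purely combinatorial. Part (1), the Chen--Fox--Lyndon factorization theorem, is proved by induction on $\ell(\nu)$: one extracts an extremal Lyndon factor at one end of $\nu$, iterates on the remaining subword, and derives uniqueness from the elementary fact that an appropriately ordered concatenation of two Lyndon words is never itself Lyndon. Part (4), the standard factorization of a Lyndon word, follows by a short argument by contradiction: if $\mu_{(1)}$ were not Lyndon, then using the definition one could extend $\mu_{(2)}$ to a strictly longer proper Lyndon subword of $\mu$, contradicting the maximality of $\mu_{(2)}$.

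Parts (2) and (3) are the substance of Leclerc's analysis of good words in type $\mathtt{A}$ \cite[\S 5]{Leclerc-04}. For the forward direction of (2), the fact that each Lyndon factor of a good word is good follows from the observation that the deletion operators $\ei$ and $\eis$ extract extremal letters from $\max \Psi(x)$; the converse direction uses Lemma \ref{lem:maxofshuffle}: if each $\nu^{\langle m \rangle}$ equals $\max \Psi(x_m)$, then $\nu$ equals $\max \Psi(x_k \cdots x_1)$ for the non-increasing concatenation. For (3), the bijection with $\Phi^+$ follows from a weight-by-weight dimension count against the PBW basis of $\ff$, and convexity of the induced order on $\Phi^+$ is Leclerc's main combinatorial result for type $\mathtt{A}$ \cite[Theorem 28]{Leclerc-04}. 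The passage to $\g = \mathfrak{sl}_\infty$ is handled by taking direct limits over finite type $\mathtt{A}$ subsystems, and the only real obstacle throughout is the careful tracking of left-right conventions.
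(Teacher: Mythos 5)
Your proposal takes essentially the same approach as the paper, which proves all four parts purely by citation to Lothaire (part (1)) and Leclerc (parts (2)--(4)); you supply more of the underlying arguments behind those citations and correctly flag the left-right convention mismatch, which the paper also notes after its definition of Lyndon words.

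One small imprecision: for the converse direction of (2) you invoke Lemma \ref{lem:maxofshuffle}, but that lemma in this paper concerns the orientifold shuffle \emph{action} $\acts$ on $\tFF$, not the shuffle \emph{product} $\circ$ on $\FF$. What you need here is the $\circ$-analogue (that $\max(\nu^{\lan k\ran}\circ\cdots\circ\nu^{\lan 1\ran}) = \nu$ for a non-increasing Lyndon factorization), which is \cite[Lemma 15]{Leclerc-04} and is not restated in this paper. The argument itself is correct; only the internal citation is off.
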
 

\begin{proof} 
For part (1), see, e.g., \cite[Theorem 11.5.1]{Lothaire}. 
For parts (2) and (3), see \cite[Propositions 17, 18, 26]{Leclerc-04}. For part (4), see \cite[Lemma 14]{Leclerc-04}. 
\end{proof} 

We call the factorization from part (1) of Proposition \ref{pro: lyndon} the \emph{Lyndon factorization} and the Lyndon words in this factorization \emph{Lyndon factors}. 
We will write it in two ways: $\nu = \plyn{k} \cdots \plyn{1}$ for $\nu^{\lan1\ran} \geq \cdots \geq \nu^{\lan k \ran}$, or $\nu = (\plyn{l})^{n_l} \cdots (\plyn{1})^{n_1}$ for $\nu^{\lan1\ran} > \cdots > \nu^{\lan l \ran}$. 
The factorization from part (4) of Proposition \ref{pro: lyndon} is called the \emph{standard factorization} of a Lyndon word.

Given $x,y \in \FF$, let $[x,y]_q = xy - q^{\norm{x}\cdot\norm{y}}yx$. One defines a map $[ \ ] \colon \mathcal{L} \to \wor$ by induction on the standard factorization: $[i]=i$ for $i \in J$, and $[\nu] = [\nu_{(2)},\nu_{(1)}]_q$ if $\nu = \nu_{(1)}\nu_{(2)}$ is the standard factorization of $\nu$. Next, given $\nu = \nu^{\lan k \ran}  \cdots  \nu^{\lan 1 \ran} \in \wor$, let $[\nu] = [\nu^{\lan k \ran}]  \cdots  [\nu^{\lan 1 \ran}]$. For $\nu \in \gooda$, set 
\[
\mathbf{l}_\nu = \Xi([\nu]) \quad (\nu \in \gooda), \quad \quad \ttt\mathbf{l}_\nu = \ttt\Xi_\lambda([\nu]) \quad (\nu \in \tgooda). 
\]

\begin{prop} \label{pro: lyn basis} 
For any $\nu \in \wor$, we have $\min([\nu]) = \nu$. Moreover, the set $\{ \mathbf{l}_\nu \mid \nu \in \gooda \}$ is a basis of $\mathbf{U}$. 
\end{prop}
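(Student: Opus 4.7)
The plan is to follow \cite[Proposition 25 and its proof]{Leclerc-04} closely, adjusting only for our conventions (anti-lexicographic order, and Lyndon words defined via proper left factors rather than right factors).

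For the first claim, I would argue by induction on $\ell(\nu)$, treating Lyndon words first and then general words via the Lyndon factorization. In the Lyndon case with $\ell(\nu) \geq 2$, take the standard factorization $\nu = \nu_{(1)}\nu_{(2)}$ from Proposition \ref{pro: lyndon}(4) and expand
\[
[\nu] \;=\; [\nu_{(2)},\nu_{(1)}]_q \;=\; [\nu_{(2)}]\,[\nu_{(1)}] \;-\; q^{\norm{\nu_{(1)}}\cdot\norm{\nu_{(2)}}}\,[\nu_{(1)}]\,[\nu_{(2)}] \;\in\; \mathcal{F}^{\mathrm{fr}}.
\]
Because the words appearing in each $[\nu_{(i)}]$ all have length $\ell(\nu_{(i)})$, the anti-lex minimum commutes with concatenation. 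Combined with the inductive hypothesis this gives
\[
\min([\nu_{(1)}][\nu_{(2)}]) \;=\; \nu_{(1)}\nu_{(2)} \;=\; \nu \quad \text{and} \quad \min([\nu_{(2)}][\nu_{(1)}]) \;=\; \nu_{(2)}\nu_{(1)}.
\]
A standard Lyndon-word argument (the mirror of the classical one, since anti-lex equals lex on letter-reversed words) yields $\nu < \nu_{(2)}\nu_{(1)}$, so $\nu$ is the overall minimum of $[\nu]$ and appears with coefficient $-q^{\norm{\nu_{(1)}}\cdot\norm{\nu_{(2)}}} \neq 0$. For general $\nu$ with Lyndon factorization $\nu = \plyn{k}\cdots\plyn{1}$, the definition $[\nu] = [\plyn{k}]\cdots[\plyn{1}]$ and the same compatibility of $\min$ with concatenation give $\min([\nu]) = \nu$.

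For the second claim, first observe that $\mathbf{l}_\nu = \Xi([\nu]) \in \Xi(\mathcal{F}^{\mathrm{fr}}) = \Psi(\ff) = \mathbf{U}$. Leclerc's monomial basis $\{\mathbf{m}_\mu \mid \mu \in \gooda\}$ (cf.\ Lemma \ref{lem:monbas} and the preceding discussion) already has the correct cardinality weight-by-weight, so it suffices to prove linear independence of $\{\mathbf{l}_\nu \mid \nu \in \gooda\}$. Adapting \cite[\S 5]{Leclerc-04}, I would combine the first part of the proposition with a leading-term analysis in the shuffle algebra $\mathcal{F}$ to establish a unitriangular expansion
\[
\mathbf{l}_\nu \;=\; c_\nu\, \mathbf{m}_\nu + \sum_{\mu > \nu} c_{\nu,\mu}\,\mathbf{m}_\mu, \qquad c_\nu \in \Qq^\times,
\]
for each $\nu \in \gooda$. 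Unitriangularity then forces linear independence and hence the basis property.

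The hard part will be this final leading-term identity. It requires controlling the shuffle expansion of $\Xi([\nu])$ and matching it against the monomial basis in the anti-lex order. The key computation is Leclerc's, up to mirroring: ``$\max$'' with respect to lex becomes ``$\min$'' with respect to anti-lex (as anti-lex is lex on letter-reversed words), so the argument translates formally, but each step must be re-verified in the mirrored setup.
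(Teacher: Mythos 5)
Your proposal follows the paper's own route: the paper's proof is simply a citation to Leclerc's Propositions 19 and 22, and you are spelling out how Leclerc's argument transports under the reversal that converts the paper's anti-lexicographic/left-factor conventions to Leclerc's lexicographic/right-factor ones, with the free-algebra expansion of $[\nu_{(2)},\nu_{(1)}]_q$, the concatenation-compatibility of $\min$, the rotation inequality $\nu<\nu_{(2)}\nu_{(1)}$, and the unitriangular change of basis to $\{\mathbf{m}_\mu\}$ being exactly the mirrored steps. One small slip: the coefficient of $\nu$ in $[\nu]$ is not $-q^{\norm{\nu_{(1)}}\cdot\norm{\nu_{(2)}}}$ on the nose but the product of such factors accumulated through the induction, namely $\prod_j(-1)^{\ell(\nu^{\langle j\rangle})-1}q^{-N(\norm{\nu^{\langle j\rangle}})}$ as recorded in Lemma~\ref{lem: trans ml}; for the proposition, though, only its non-vanishing matters, and that your induction does supply.
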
 

\begin{proof}
See \cite[Propositions 19, 22]{Leclerc-04}.     
\end{proof}

The basis from Proposition \ref{pro: lyn basis} is called the \emph{Lyndon basis}. 

\begin{lemma} \label{lem: trans ml}
The set $\{\ttt\mathbf{l}_\nu \mid \nu \in \tgooda \}$ is a basis of $\EKM(\lambda)$. Moreover, the transition matrix $(c_{\nu\mu})$ from $\{\ttt\mathbf{l}_\nu \mid \nu \in \tgooda \}$ to $\{\ttt\mathbf{m}_\mu \mid \mu \in \tgooda \}$ is triangular with $c_{\nu\nu} = \prod_{i=1}^k (-1)^{\ell(\nu^{\lan k \ran})-1} q^{-N(\norm{\nu^{\lan k \ran}})}$. 
\end{lemma}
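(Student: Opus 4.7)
The plan is to establish the statement via a triangularity argument combined with an inductive coefficient computation.

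First, by the definition of $\ttt\Xi_\lambda$, one can write $\ttt\mathbf{l}_\nu = \varnothing \acts \Xi([\nu])$, where $\Xi([\nu]) \in \FF$ is the shuffle evaluation of the Lyndon bracketing of $\nu$. Proposition \ref{pro: lyn basis} then implies that $\nu$ is the anti-lexicographic minimum among the words appearing in $\Xi([\nu])$, with some non-zero coefficient $d_\nu$. Since $\varnothing \acts \mu = \ttt\mathbf{m}_\mu$, this translates to
\begin{equation*}
\ttt\mathbf{l}_\nu \ = \ d_\nu\, \ttt\mathbf{m}_\nu + \sum_{\mu > \nu} d_\mu\, \ttt\mathbf{m}_\mu
\end{equation*}
in $\EKM(\lambda)$. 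Words $\mu$ in the sum that are not $\theta$-good can be removed by iteratively applying Lemma \ref{lem:tgood-equiv-char}, which rewrites $\ttt\mathbf{m}_\mu$ as a $\Qq$-linear combination of $\ttt\mathbf{m}_{\mu'}$ with $\mu' > \mu$. The process terminates since the number of words of fixed length is finite, and what remains is a triangular expansion of $\ttt\mathbf{l}_\nu$ in the basis $\{\ttt\mathbf{m}_\mu \mid \mu \in \tgooda\}$ with diagonal coefficient $c_{\nu\nu} = d_\nu$.

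Second, I would compute $d_\nu$ by exploiting the Lyndon factorization $\nu = \nu^{\lan k \ran} \cdots \nu^{\lan 1 \ran}$ and the identity $[\nu] = [\nu^{\lan k \ran}] \cdots [\nu^{\lan 1 \ran}]$. Using the length-homogeneity of each factor $[\nu^{\lan i \ran}]$ together with the minimality of $\nu^{\lan i \ran}$ in the expansion of $\Xi([\nu^{\lan i \ran}])$, the coefficient of $\nu$ factors as $d_\nu = \prod_{i=1}^{k} d_{\nu^{\lan i \ran}}$, which reduces the problem to the case of a single Lyndon word. For a Lyndon word $\mu$ with standard factorization $\mu = \mu_{(1)}\mu_{(2)}$, I would then induct on $\ell(\mu)$ using the recursion $[\mu] = [\mu_{(2)}, \mu_{(1)}]_q$. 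The key Lyndon-combinatorial input is that only one of the two summands in this recursion contributes to the coefficient of $\mu$ itself, with the other vanishing by the inductive minimality (the forced splitting at position $\ell(\mu_{(2)})$ produces a prefix strictly smaller than $\mu_{(2)}$ in the anti-lexicographic order). Combining the inductive hypothesis with the identity $N(\beta + \gamma) = N(\beta) + N(\gamma) + \beta \cdot \gamma$ closes the induction and yields the stated formula.

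Finally, the triangular form of the transition matrix with the non-vanishing diagonal ensures its invertibility, whence $\{\ttt\mathbf{l}_\nu \mid \nu \in \tgooda\}$ is a basis of $\EKM(\lambda)$. The main technical obstacle is the Lyndon-combinatorial tracking of contributions in the inductive coefficient computation, parallel to the analysis in \cite[Proposition 22]{Leclerc-04}.
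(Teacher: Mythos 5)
Your overall strategy — establish a triangular expansion of $\ttt\mathbf{l}_\nu$ in the $\ttt\mathbf{m}_\mu$'s, remove the non-$\theta$-good terms via Lemma \ref{lem:tgood-equiv-char}, and compute the diagonal coefficient inductively from the Lyndon factorization — is the same as the paper's. However, your first paragraph rests on the false identity $\varnothing \acts \mu = \ttt\mathbf{m}_\mu$. By definition $\ttt\mathbf{m}_\mu = \ttt\Xi_\lambda(\mu) = \varnothing \acts \Xi(\mu)$, and $\Xi(\mu) = \mu_1 \circ \cdots \circ \mu_n$ is a shuffle product with many terms, not the single word $\mu$ in $\FF$; already for $\Norm{\mu}=2$ one has $\ttt\mathbf{m}_\mu = \varnothing \acts \mu + q^{-\mu_1\cdot\mu_2}\,\varnothing \acts (\mu_2\mu_1) \neq \varnothing \acts \mu$. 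Consequently, expanding $\Xi([\nu]) \in \FF$ in the word basis and reading off the coefficient $d_\nu$ of $\nu$ does not yield the diagonal entry $c_{\nu\nu}$ of the transition matrix, because $\mathbf{m}_\mu = \Xi(\mu)$ for $\mu > \nu$ of the same weight can also contribute to the word $\nu$ (e.g.\ for $\nu = ij$ Lyndon, the coefficient of $\nu$ in $\Xi([\nu])$ is $q^{-i\cdot j}-q^{i\cdot j}$ while $c_{\nu\nu} = -q^{i\cdot j}$).

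The fix is to stay in the free module $V^{\mathrm{fr}}$: by Proposition \ref{pro: lyn basis} one has $[\nu] = c_{\nu\nu}\,\nu + \sum_{\mu > \nu} c_{\nu\mu}\,\mu$ in $V^{\mathrm{fr}}$, and applying the $\Qq$-linear map $\ttt\Xi_\lambda$, which by definition sends each basis word $\mu$ to $\ttt\mathbf{m}_\mu$, immediately gives $\ttt\mathbf{l}_\nu = c_{\nu\nu}\ttt\mathbf{m}_\nu + \sum_{\mu>\nu}c_{\nu\mu}\ttt\mathbf{m}_\mu$; the Lemma \ref{lem:tgood-equiv-char} step and the invertibility conclusion then go through exactly as you wrote. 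The same conflation reappears in your second paragraph: the multiplicativity $d_\nu = \prod_i d_{\nu^{\lan i\ran}}$ over Lyndon factors is automatic for concatenation in $V^{\mathrm{fr}}$, but needs extra argument for the shuffle product since words from distinct factors can interleave to reproduce $\nu$. Once you replace $\Xi([\nu])$ by $[\nu] \in V^{\mathrm{fr}}$ throughout, the inductive computation you sketch becomes the one the paper delegates to \cite[Proposition 30]{Leclerc-04}.
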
 

\begin{proof}
By Proposition \ref{pro: lyn basis}, we can write $[\nu] = c_{\nu\nu} \nu + \sum_{\nu < \mu} c_{\nu\mu} \mu$, for some $c_{\nu\mu} \in \Qq$. Applying $\ttt\Xi_\lambda$ to both sides, we get $\ttt\mathbf{l}_\nu = c_{\nu\nu} \ttt\mathbf{m}_\nu + \sum_{\mu > \nu} c_{\nu\mu} \ttt\mathbf{m}_\mu$. By Lemma \ref{lem:tgood-equiv-char}, this can be rewritten as $\ttt\mathbf{l}_\nu = c_{\nu\nu} \ttt\mathbf{m}_\nu + \sum_{\nu < \mu \in \tgooda} c'_{\nu\mu} \ttt\mathbf{m}_\mu$. Hence the transition matrix is triangular. To show the last statement of the lemma,  one uses the same calculation as in \cite[Proposition 30]{Leclerc-04}. 
\end{proof} 

\begin{assum} \label{assumption} 
From now on we assume that we are working with the standard ordering of $J$, i.e., $\alpha_k \leq \alpha_l$ if and only if $k \leq l$. In this case, the map $\ttt \sigma$ in \eqref{eq:twinv} preserves~$\lyn$. %and (ii) $\mu < \nu$ implies $\ttt w(\mu) > \ttt w(\nu)$, for all $\nu \in \lyn$. Together with Proposition \ref{pro: lyndon}, our assumption implies that 
%$\nu \in \gooda$ if and only if $\ttt w(\nu) \in \gooda$. 
\end{assum} 

Before stating the next lemma, we need to introduce some notation. 
Given $\mu, \mu' \in \lyn$ with $\norm{\mu} = \beta_{k,l}$, $\norm{\mu'} = \beta_{m,n}$, we write
\[ \mu \subset \mu' \iff m < k \mbox{ and } l < n.\] 

\begin{lemma} \label{lem: delta and comm} 
The following hold: 
\begin{enumerate}
\item If $\nu \in \lyn$ then $\mathbf{l}_\nu$ is a multiple of $\nu$. 
\item If $\nu, \mu \in \lyn$ and $\mu \subset \nu$, then $\nu \circ \mu = \mu \circ \nu$. 
\end{enumerate}
\end{lemma}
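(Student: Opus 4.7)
The plan is induction on $\ell(\nu)$ for part (1), and for part (2) a short bilinear form computation followed by an appeal, via part (1), to commutativity of PBW root vectors in $U_q(\mathfrak{sl}_\infty)$ (or equivalently, a direct combinatorial symmetry argument).

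For part (1), the base case $\ell(\nu) = 1$ is immediate, since $\mathbf{l}_i = \Xi(i) = i$. For $\ell(\nu) \geq 2$, Assumption \ref{assumption} together with the bijection $\lyn \xrightarrow{\sim} \proot$ of Proposition \ref{pro: lyndon}(3) forces the unique good Lyndon word of weight $\beta_{k,l}$ to be $\nu = \alpha_l \alpha_{l-2} \cdots \alpha_k$: this is the $\leq$-largest permutation of the letters $\alpha_k, \alpha_{k+2}, \ldots, \alpha_l$ under our anti-lexicographic convention, it is Lyndon, and hence good. Its standard factorization is $\nu = \nu_{(1)} \nu_{(2)}$ with $\nu_{(1)} = \alpha_l$ a single letter and $\nu_{(2)} = \alpha_{l-2} \cdots \alpha_k$. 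By induction $\mathbf{l}_{\nu_{(2)}} = c\,\nu_{(2)}$ for some $c \in \Qq^\times$, and $\norm{\nu_{(2)}} \cdot \alpha_l = -1$, so
\[
\mathbf{l}_\nu \;=\; \mathbf{l}_{\nu_{(2)}} \circ \alpha_l - q^{-1}\, \alpha_l \circ \mathbf{l}_{\nu_{(2)}} \;=\; c\,\bigl(\nu_{(2)} \circ \alpha_l - q^{-1}\, \alpha_l \circ \nu_{(2)}\bigr).
\]
Expanding both shuffles via \eqref{shufflealgform} and noting that $\alpha_{l-2}$ is the only letter of $\nu_{(2)}$ with nonzero pairing against $\alpha_l$, all the ``interior'' insertions of $\alpha_l$ cancel pairwise between the two terms, leaving $\mathbf{l}_\nu = c(q - q^{-1})\,\alpha_l \nu_{(2)} = c(q - q^{-1})\,\nu$.

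For part (2), I would first verify that $\norm{\nu} \cdot \norm{\mu} = \beta_{m,n} \cdot \beta_{k,l} = 0$. Using \eqref{bilform}, each letter $\alpha_i \in \supp(\mu)$ contributes $+2$ from its self-pairing, but $-1$ each from the pairings against $\alpha_{i-2}$ and $\alpha_{i+2}$, both of which lie in $\supp(\nu)$ by the strict nesting $m < k \leq l < n$; these contributions cancel. Then, by part (1), the identity $\nu \circ \mu = \mu \circ \nu$ is equivalent to $\mathbf{l}_\mu \circ \mathbf{l}_\nu = \mathbf{l}_\nu \circ \mathbf{l}_\mu$ in $\mathbf{U}$. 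Under the isomorphism $\mathbf{U} \cong \ff$, this translates to commutation of the PBW root vectors attached to $\beta_{k,l}$ and $\beta_{m,n}$. For nested positive roots in type $\mathtt{A}$, the Levendorskii--Soibelman straightening formula (in the convex order induced by $\Lyn$) has no admissible intermediate terms summing to $\beta_{k,l} + \beta_{m,n}$, so the $q$-commutator vanishes; combined with $\norm{\nu}\cdot\norm{\mu}=0$, this yields the claimed commutativity.

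The main obstacle is the commutation step in part (2): while $\norm{\nu}\cdot\norm{\mu}=0$ is a one-line check, upgrading $q$-commutativity to actual equality requires either the quantum group input above or a genuinely combinatorial argument. A self-contained alternative is to construct an involution on the set of shuffles producing a fixed word $x$, obtained by swapping the attribution of the shared letters between $\nu$ and $\mu$, and to show that it interchanges the inversion statistics $d(\nu,\mu,-)$ and $d(\mu,\nu,-)$; combined with the identity $d(\nu,\mu,w) + d(\mu,\nu,w') = \norm{\nu}\cdot\norm{\mu} = 0$ for matching pairs, this matches the coefficients of $x$ in the two shuffles term by term.
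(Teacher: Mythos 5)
Your argument for part (1) is essentially the paper's: induct on the length, use Assumption \ref{assumption} to see that the standard factorization has $\nu_{(1)}$ a single letter, and then expand the two shuffles explicitly. (Your constant $c(q-q^{-1})$ is what the computation actually yields; the paper's displayed ``$[2]\nu$'' looks like a slip, but the only thing either of you needs is that the scalar is nonzero.)

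For part (2) you take a genuinely different route. The paper dispatches the claim in one sentence by citing \cite[Proposition~30]{Leclerc-04} together with \cite[Proposition~3.14(3)]{Enomoto-Kashiwara-08}, the latter of which directly gives the commutation. You instead reprove it: first check $\norm{\nu}\cdot\norm{\mu}=0$ (correct -- for each $\alpha_i\in\supp(\mu)$ the strict nesting $m<k\leq i\leq l<n$ puts both $\alpha_{i\pm2}$ in $\supp(\nu)$, so $2-1-1=0$), then use part (1) and $\kappa_\nu=1$ to reduce to commutation of the PBW root vectors $P_{\beta_{k,l}}$, $P_{\beta_{m,n}}$, and finally invoke the Levendorskii--Soibelman formula with the assertion that the intermediate sum is empty. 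That assertion is the crux and you don't justify it, but it is true and not hard to check in this convex order: one computes $\beta_{m,l}<\beta_{m,n}<\beta_{k,l}<\beta_{k,n}$, and more generally any root $\beta_{a,b}$ strictly between $\beta_{m,n}$ and $\beta_{k,l}$ has $a>m$ (or else $a=m$ forces $b>n$), so no sum of such roots can account for the coefficient $1$ of $\alpha_m$ in $\beta_{m,n}+\beta_{k,l}$, and the LS sum is indeed empty. Your approach is more self-contained and makes the mechanism visible; the paper's citation is shorter but opaque. Your closing sketch of a purely combinatorial sign-reversing involution on shuffles is a plausible third route, though as you note it would still require working out the inversion-statistic bookkeeping.
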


\begin{proof}
It suffices to prove the first statement for $\nu = \nu_1 \cdots \nu_l \in \lyn$. 
We proceed by induction on $l$. The base case $l=1$ is clear. Let $\nu = \nu_{(1)}\nu_{(2)}$ be the standard factorization of $\nu$. Since we are working with the standard ordering on $J$, $\nu_{(1)} = i$ for some $i \in J$. By induction, we get that $\mathbf{l}_\nu = \Xi([\nu]) = \Xi([\nu_{(2)}])\circ i - q^{-1} i \circ \Xi([\nu_{(2)}])$ is a multiple of $\nu_{(2)}\circ i - q^{-1} i \circ \nu_{(2)}$. Write $\nu_{(2)} =  j\nu_{(2)}'$ with $j \in J$. Then \eqref{shufflealgform} implies that $\nu_{(2)}\circ i - q^{-1} i \circ \nu_{(2)} = (j(\nu_{(2)}'\circ i) + q i \nu_{(2)}) - q^{-1} (i \nu_{(2)} + q j (i \circ \nu_{(2)}')) = [2] \nu$. This completes the proof of the first statement. 
The second statement now follows directly from \cite[Proposition 30]{Leclerc-04} and \cite[Proposition 3.14(3)]{Enomoto-Kashiwara-08}.
\end{proof}

\begin{definition}
We say that $\nu \in \mathcal{L}$ is \emph{$\theta$-Lyndon} if $\nu \geq \ttt w(\nu)$. Let $\ttt \mathcal{L}$ be the set of $\theta$-Lyndon words, and $\tLyn = \gooda \cap \ttt\mathcal{L}$. Let $\tgoodb$ denote the set of all $\theta$-good words $\mu = \plyn{k} \cdots \plyn{1}$ with $\plyn{k}, \cdots, \plyn{1} \in \tlyn$. Moreover, if $\mu = \plyn{k} \cdots \plyn{1} \in \tgooda$ and $\plyn{k}, \cdots, \plyn{1} \notin \tlyn$, then $\mu$ is called $\theta$-\emph{cuspidal}. Let $\tgoodc$ denote the set of all $\theta$-cuspidal words. 
\end{definition}

\begin{lemma} \label{lem: tgood Lyndon props} 
The $\theta$-good Lyndon words have the following properties. 
\be 
%\item Let $\nu,\mu \in \lyn$. If $\nu \not\subset \mu$ then: $\mu < \nu$ implies $\ttt w(\mu) > \ttt w(\nu)$. 
\item If $\nu \in \lyn$ then $\nu \in \mathbf{U}$. %If $\nu = \max(x)$ for some homogeneous $x \in \mathbf{U}$, then $\ttt w(\nu) = \max(\ttt w (x))$ as well. 
\item Let $\mu \in \twor$ and $\nu \in \ttt\mathcal{L}$ with $\nu \geq \mu$. Then $\mu\nu = \max(\mu \acts \nu)$. 
\item $\ttt\mathcal{L}_+ \subseteq \mathcal{L}\cap\tgooda$. 
\item Let $\mu \in \tgooda$ and $\nu \in \tLyn$ with $\nu \geq \mu$. Then $\mu \nu \in \tgooda$. 
\item If all of the Lyndon factors of $\nu$ are in $\tLyn$, then $\nu \in \tgooda$. 
\item The map $\nu \mapsto \ttt\norm{\nu}$ yields a bijection $\tlyn \xrightarrow{\sim} \ttt\Phi^+$. 
\ee
\end{lemma}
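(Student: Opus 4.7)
My approach treats (2) as the central technical step, with (3)--(5) cascading from it, while (1) and (6) are handled independently. For (1), I would invoke Lemma \ref{lem: delta and comm}(1), which shows that $\mathbf{l}_\nu$ is a nonzero scalar multiple of $\nu$ when $\nu \in \lyn$. Since $\mathbf{l}_\nu = \Xi([\nu])$ is the image of an iterated $q$-commutator of generators and $\Psi \colon \ff \to \FF$ is an algebra homomorphism with $\Psi(f_i) = i$, we obtain $\mathbf{l}_\nu \in \Psi(\ff) = \mathbf{U}$, hence $\nu \in \mathbf{U}$.

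The plan for (2) is to unpack the shuffle \eqref{sh action}: the coset representative $w = e$ contributes exactly the term $\mu\nu$, and I must show that every other $w \in \tcoset{\Norm{\mu}_\theta}{\Norm{\nu}}$ contributes a strictly smaller word in anti-lex order. Such $w$ split into two families: (a) those that only use $s_1, \ldots, s_{n-1}$, inserting letters of $\nu$ among those of $\mu$ as in the classical symmetric-group shuffle; and (b) those employing $s_0$ to relocate some initial letters of $\nu$ to the left portion with $\theta$ applied. In case (a), the Lyndon property of $\nu$ together with $\nu \geq \mu$ forces the result to be at most $\mu\nu$, strictly smaller unless $w = e$, by an argument parallel to the one in \cite{Leclerc-04}. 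In case (b), the $\theta$-Lyndon condition $\nu \geq \ttt w(\nu)$ is precisely what guarantees that $\theta$-twisted letters, once moved to the left, cannot produce a word exceeding $\mu\nu$. This is the main obstacle, requiring careful bookkeeping with the signed permutations and the quadratic form $d(\mu,\nu,w)$.

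Parts (3) and (4) then follow quickly from (2). For (3): if $\nu \in \tLyn \subseteq \lyn$, part (1) yields $y \in \ff$ with $\Psi(y) = \nu$, and Proposition \ref{pro:shufflemodulereal} gives $\ttt\Psi(\sigma(y) \cdot v_\lambda) = \varnothing \acts \Psi(y) = \varnothing \acts \nu$, whose max equals $\nu$ by (2); hence $\nu \in \tgooda$. For (4): pick $u \in \EKm$ with $\max(\ttt\Psi(u)) = \mu$ and $z \in \ff$ with $\Psi(z) = \nu$; the intertwining of Proposition \ref{pro:shufflemodulereal} yields $\ttt\Psi(\sigma(z) \cdot u) = \ttt\Psi(u) \acts \nu$. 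Expanding $\ttt\Psi(u) = \mu + (\text{strictly smaller terms})$ and then applying (2) to the leading term and Lemma \ref{lem:maxofshuffle} to the remainder gives max $\mu\nu$. Part (5) follows by induction on the number of Lyndon factors, with (3) as base case: for the induction step I write $\nu = \plyn{k}\cdots\plyn{1}$, set $\mu = \plyn{k}\cdots\plyn{2}$, observe that $\mu \in \tgooda$ by induction (its Lyndon factors being among those of $\nu$), and apply (4) with $\plyn{1} \in \tLyn$. The required inequality $\plyn{1} \geq \plyn{k}\cdots\plyn{2}$ is a standard consequence of the defining property of Lyndon factorizations in the anti-lex order.

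Finally, for (6), I would exploit the bijection $\lyn \xrightarrow{\sim} \Phi^+$ from Proposition \ref{pro: lyndon}(3). By Assumption \ref{assumption}, $\ttt\sigma$ preserves $\lyn$, and a direct computation shows that $\nu \mapsto \norm{\nu}$ intertwines $\ttt\sigma$ with the involution $\theta$ on $\Phi^+$. Hence $\ttt\sigma$-orbits in $\lyn$ biject with $\theta$-orbits in $\Phi^+$, which in turn biject with $\ttt\Phi^+$ via $\alpha \mapsto \alpha + \theta(\alpha)$. Within each orbit (of size 1 or 2), the $\theta$-Lyndon condition $\nu \geq \ttt w(\nu) = \ttt\sigma(\nu)$ selects exactly one representative, namely the maximum, so $\tlyn$ bijects with $\ttt\sigma$-orbits; composing yields the desired bijection $\tlyn \xrightarrow{\sim} \ttt\Phi^+$.
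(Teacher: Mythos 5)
Your proposal follows essentially the same route as the paper for every part. Parts (3)--(5) reproduce the paper's argument almost verbatim, and (6) merely spells out what the paper calls ``clear from the definitions.'' For (2), the paper simply refers to the proof of \cite[Lemma~15]{Leclerc-04}; your split into the plain-shuffle case and the $s_0$-twisted case, with the $\theta$-Lyndon inequality $\nu \geq \ttt w(\nu)$ controlling the twisted contributions, is the right way to unpack that citation, but like the paper you leave the bookkeeping as a sketch rather than carrying it out. The only genuine (small) difference is in (1): the paper uses Assumption~\ref{assumption} together with \cite[\S 8.1]{Leclerc-04} to observe that a good Lyndon word has the explicit form $\alpha_k\alpha_{k-2}\cdots\alpha_{k-2l}$, hence is the \emph{smallest} word of its weight, forcing any $x\in\mathbf{U}$ with $\max(x)=\nu$ to equal $\nu$; you instead deduce $\nu\in\mathbf{U}$ from Lemma~\ref{lem: delta and comm}(1), noting that $\mathbf{l}_\nu=\Xi([\nu])\in\mathbf{U}$ is a nonzero multiple of $\nu$. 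Both arguments rely on the same standard-ordering input (Lemma~\ref{lem: delta and comm}(1) is itself proved using that input), so this is a cosmetic shortcut rather than a genuinely different method.
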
 

\begin{proof} 
Since $\nu$ is good, there exists some homogeneous $x \in \mathbf{U}$ such that $x = \nu + y$ with $\nu$ greater than any word $\mu$ in $y$. 
By Assumption \ref{assumption} and \cite[\S 8.1]{Leclerc-04}, $\nu$ is of the form $\alpha_k\alpha_{k-2}\cdots \alpha_{k-2l}$, which implies that $\nu$ is the smallest word of weight $\norm{\nu}$, so $x = \nu$. 
%Since $\nu$ is Lyndon, it is, by \cite[Corollary 27]{Leclerc-04}, the smallest good word of weight $\norm{\nu}$. Therefore, no word $\mu$ in $y$ is good. In the Lyndon factorization of $\mu$, the Lyndon words 
%By Assumption, $\ttt w(\mu)$ is also not good. Therefore, $\ttt w(\mu)$ cannot be the greatest word in $\ttt w(x) = \ttt w(\nu) + \ttt w(y) \in \mathbf{U}$, implying that $\ttt w(\nu) > \ttt w(\mu)$. This proves (1). 
The proof of (2) is similar to the proof of \cite[Lemma 15]{Leclerc-04}. 
If $\nu \in \ttt \mathcal{L}_+$ then, by definition, $\nu \in \mathcal{L}_+$ and $\nu \geq \ttt w(\nu)$. Hence $\max(\varnothing \acts \nu) = \nu$. 
By part (1), $\nu \in \mathbf{U}$, so $\nu \in \tgooda$. This proves (3). 
%such that $\nu = \max(x)$ and $\ttt w(\nu) = \max(\ttt w (x))$. Lemma \ref{lem:maxofshuffle} implies that $\max(\varnothing \acts x) = \max(\varnothing \acts \nu)$. As $\nu \geq \ttt w(\nu)$, we have  Hence 

Let us prove (4). If $\mu = \varnothing$ then the statement reduces to (3). Otherwise, choose a homogeneous element $\varnothing \neq x \in \EKM(\lambda)$  such that $\mu = \max(x)$. Then, after possible rescaling, $x = \mu + r$, where $r$ is a linear combination of words $< \mu$. We have 
$x \acts \nu = \mu \acts \nu + r \acts \nu$. Part (2) implies that $\max(\mu \acts \nu) = \mu \nu$. It follows from Lemma \ref{lem:maxofshuffle} that $\max(\mu \acts \nu) > \max(r \acts \nu)$. 

Next, we prove (5). Suppose that each factor of $\nu = \plyn{k} \cdots \plyn{1}$ is $\theta$-Lyndon. If $k=1$ then $\nu$ is $\theta$-good by (3). By induction on the number of Lyndon factors, we can assume that $\nu' = \plyn{k} \cdots \plyn{2}$ is $\theta$-good. The statement now follows from (4). Part (6) is clear from the definitions. 
%Finally, let us prove (7). It follows from (4) that $\tgoodc \cdot \tgoodb \subseteq \tgooda$. Conversely, take $\mu = \plyn{k} \cdots \plyn{1} \in \tgooda$ and let $l$ be the largest index for which $\plyn{l} \in \tlyn$. Then $\plyn{k} \cdots \plyn{l+1} \in \tgoodc$. Since $\ttt w(\plyn{l}) \leq \plyn{l} \leq \plyn{l-1}, \cdots, \plyn{1}$, Assumption \ref{assumption} implies that $\ttt w(\plyn{m}) \leq \plyn{m}$ for all $1 \leq m \leq l-1$. Hence $\plyn{l} \cdots \plyn{1} \in \tgoodb$. 
\end{proof}

%If $\nu \subset \nu'$ or $\nu' \subset \nu$, we call $\nu\nu' \apsto \nu'\nu$ an elementary move. 
Given $\nu = \plyn{s} \cdots \plyn{1}, \ \nu' = \plyn{t} \cdots \plyn{s+1} \in \gooda$, let $\sh(\nu,\nu') = \pplyn{\mu}{t}\cdots \pplyn{\mu}{1}$ be the good word obtained by shuffling the Lyndon factors of $\nu$ and $\nu'$ in such a way that $\pplyn{\mu}{t} \leq \cdots \leq \pplyn{\mu}{1}$.

\begin{lemma}
The map
\[
\tgoodc \times \tgoodb \to \tgooda, \quad (\nu, \nu') \mapsto \sh(\nu,\nu'). 
\]
is a well-defined injection. 
%lemma 3.14
\end{lemma}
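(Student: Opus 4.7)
The plan is to separate the statement into injectivity and well-definedness, and to establish the former as a quick consequence of the uniqueness of the Lyndon factorization. Given $\sh(\nu,\nu')$ with its unique Lyndon factorization (Proposition~\ref{pro: lyndon}(1)), the factors partition into those belonging to $\tLyn$ and those that do not; concatenating each subset in decreasing order recovers $\nu'$ and $\nu$ respectively, which gives a left inverse.

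For well-definedness, I would construct an element $v \in \EKmb$ whose maximum word is $\sh(\nu,\nu')$. Since $\nu \in \tgoodc \subseteq \tgooda$, choose $u \in \EKmb$ with $\max(u) = \nu$. Write $\nu' = \plyn{t}\cdots\plyn{s+1}$ with $\plyn{j} \in \tLyn$ for $s+1 \leq j \leq t$. By Lemma~\ref{lem: tgood Lyndon props}(1), each $\plyn{j}$ lies in $\mathbf{U} \subseteq \FF$. Since $\EKmb = \ttt\Psi(\EKm)$ is closed under the right $\mathbf{U}$-action (via Proposition~\ref{pro:shufflemodulereal} together with the module axiom), we may set
\[ v \seteq u \acts \plyn{t} \acts \cdots \acts \plyn{s+1} \in \EKmb. \]
The goal becomes $\max(v) = \sh(\nu,\nu')$, which I would prove by induction on $t-s$ (the base case $t=s$ being immediate) using the auxiliary claim: for any $w \in \EKmb$ and $\mu \in \tLyn$, $\max(w \acts \mu) = \sh(\max(w), \mu)$.

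To prove the auxiliary claim, I would inspect the shuffle formula \eqref{sh action}. Summands whose coset representative replaces some letter $\mu_k$ by $\theta(\mu_k)$ produce ``$\theta$-twisted'' words; since $\mu \in \tLyn$ means $\mu \geq \ttt w(\mu)$, any such twisted word is strictly dominated, in the anti-lexicographic order, by its untwisted counterpart. Among the remaining untwisted summands (indexed by ordinary shuffle permutations), Lemma~\ref{lem:maxofshuffle} combined with parts~(2) and~(4) of Lemma~\ref{lem: tgood Lyndon props} identifies the maximum as the Lyndon-sorted interleaving $\sh(\max(w),\mu)$. The main obstacle will be making this combinatorial argument airtight: verifying that no $\theta$-twisted summand can match or exceed the untwisted maximum (where the $\theta$-Lyndon condition $\mu \geq \ttt w(\mu)$ is used essentially), and identifying the untwisted maximum with the expected Lyndon-sorted insertion, which requires careful bookkeeping of how the Lyndon factorization of $\max(w)$ interleaves with $\mu$ under the ordinary shuffle.
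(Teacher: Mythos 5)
Your overall strategy matches the paper's: you dispose of injectivity via uniqueness of the Lyndon factorization and reduce well-definedness to a claim of the form $\max(a \acts \mu) = \sh(a,\mu)$ for $a \in \tgooda$ and $\mu \in \tLyn$, proved by induction on the number of Lyndon factors in $\nu'$. The paper proceeds in the same spirit (it peels off the largest Lyndon factor, splits into the easy case covered by Lemma~\ref{lem: tgood Lyndon props}(4) and a generalization of it), whereas you construct $v = u \acts \plyn{t} \acts \cdots$ in $\EKmb$ and appeal once to the auxiliary claim. These are superficial reorganizations of the same argument; both hinge on that auxiliary claim.

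The gap is in your sketch of the auxiliary claim. You write that since $\mu \geq \ttt w(\mu)$, every $\theta$-twisted summand of $a \acts \mu$ is strictly dominated by ``its untwisted counterpart.'' This step is not a direct consequence of the $\theta$-Lyndon condition. The condition $\mu \geq \ttt w(\mu)$ compares $\mu$ with the word obtained by reversing and applying $\theta$ to \emph{all} letters simultaneously; it says nothing letter-by-letter. Indeed, a word $\mu \in \tLyn$ can have a letter $\mu_k$ with $\theta(\mu_k) > \mu_k$ (e.g.\ $\mu = \alpha_3\alpha_1\alpha_{-1}$, whose last letter satisfies $\theta(\alpha_{-1}) = \alpha_1 > \alpha_{-1}$), so a single-letter domination argument cannot work. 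Moreover, there is no canonical ``untwisted counterpart'' of a given twisted shuffle: the coset representatives that flip letters also reorder them, so the comparison must engage with the full combinatorics of $\tcoset{\Norm{\beta}_\theta}{\Norm{\beta'}}$. You flag this (``the main obstacle will be making this combinatorial argument airtight''), and indeed it is the obstacle. The paper circumvents it by first reducing to the case where $a$ is a single Lyndon word; there, using the explicit form $\alpha_k\alpha_{k-2}\cdots\alpha_{k-2l}$ of good Lyndon words in type $\mathtt{A}$ and the hypothesis $b \geq \ttt w(b)$, one can identify $\max(a \acts b) = \max(a \circ b) = ba$ directly. You should either adopt that reduction or supply a genuine proof that the $\theta$-twisted terms cannot exceed $\sh(a,\mu)$; the latter requires comparing entire words positionally after the reordering, not appealing to $\mu \geq \ttt w(\mu)$ alone. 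Your second sub-step (that the maximum among untwisted summands is the Lyndon-sorted interleaving) also needs an argument; the paper's reduction to $a$ Lyndon sidesteps this as well.
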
 

\begin{proof}
%If $\mu \in \lyn \backslash \tlyn$, $\mu' \in \tlyn$ and $\mu' < \mu$, then it is easy to see that $\mu \subset \mu'$. Lemma \ref{lem: delta and comm} then implies that $\mu \circ \mu' = \mu' \circ \mu$. 
%Hence $\Delta_{\nu} \circ \Delta_{\nu'} = \Delta_{\sh{\nu,\nu'}}$. This 
%part 2 of pr lemma gen Leclerc -> max is sh, injection proof the non theta are cusp
%need an upper bound
It is clear the map is injective, so we only have to show that $\sh(\nu,\nu')$ is $\theta$-good. We argue by induction on the number $k$ of Lyndon factors in $\nu' = \plyn{k} \cdots \plyn{1}$. If $k=0$ then $\nu$ is $\theta$-good by assumption. Otherwise, letting $\nu'' = \plyn{k} \cdots \plyn{2}$, we can assume that $\sh(\nu,\nu'')$ is $\theta$-good. If $\plyn{1} \geq \sh(\nu,\nu')$, then $\sh(\nu,\nu') = \sh(\nu,\nu'')\plyn{1}$, and we conclude that $\sh(\nu,\nu') \in \tgooda$ from part (4) of Lemma \ref{lem: tgood Lyndon props}. 

If $\plyn{1} < \sh(\nu,\nu')$ then we require the following generalization of part (4) of Lemma \ref{lem: tgood Lyndon props}: given $a \in \tgooda$ and $b \in \tLyn$ with $b < a$, we have $\sh(a,b) \in \tgooda$. The old proof carries over except that instead of invoking part (2) of Lemma \ref{lem: tgood Lyndon props}, we need to show that $\max(a \acts b) = \sh(a,b)$. Without loss of generality, we may assume $a$ is Lyndon. Since $b \geq \ttt w(b)$, we have $\max(a \acts b) = \max(a \circ b)$. Let us write $a = a_n \cdots a_1$ and $b = b_m \cdots b_1$. Since $a_n \geq \cdots \geq a_1 > b_1$, it follows that $\max(a \circ b) = ba$. 
\end{proof}

Given $\beta \in \mathsf{Q}^\theta_+$, let $\tkpf(\beta)$ denote the number of ways to write $\beta$ as a sum of roots in $\ttt\Phi^+$. 

\begin{prop} \label{prop: dim of V modules tkpf} 
If $\lambda = 0$ then: (i) $\ttt\mathcal{L}_+ = \mathcal{L}\cap\tgooda$, and (ii) $\tgooda = \tgoodb$. Hence $\dim_q \EKM_\beta = \tkpf(\beta)$. 
\end{prop}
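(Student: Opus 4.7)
The plan is to establish (i) and (ii), from which the dimension formula follows automatically. Indeed, by Lemma~\ref{lem: trans ml} the Lyndon basis gives $\dim_q \EKM_\beta = |\tgood|$, and once (ii) is established this equals $|\tgoodb \cap \{\ttt\norm{\cdot} = \beta\}|$; the bijection $\tlyn \xrightarrow{\sim} \ttt\Phi^+$ of Lemma~\ref{lem: tgood Lyndon props}(6) then identifies this with the set of multisets of positive roots in $\ttt\Phi^+$ summing to $\beta$, which is counted precisely by $\tkpf(\beta)$.

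For part (i), the inclusion $\tLyn \subseteq \mathcal{L} \cap \tgooda$ is Lemma~\ref{lem: tgood Lyndon props}(3). For the reverse, let $\nu \in \mathcal{L} \cap \tgooda$. By Assumption~\ref{assumption} and \cite[\S 8.1]{Leclerc-04}, I would write $\nu = \alpha_k \alpha_{k-2} \cdots \alpha_{k-2l}$, so that $\ttt w(\nu) = \alpha_{2l-k} \alpha_{2l-k-2} \cdots \alpha_{-k}$ is again a good Lyndon word of the same $\theta$-weight. The key input available when $\lambda = 0$ is the identity $\ttt\mathbf{m}_i = i + \theta(i) = \ttt\mathbf{m}_{\theta(i)}$, yielding the base-case kernel relation $i - \theta(i) \in \ker \ttt\Xi_0$ for every $i \in J$. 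Using the right $\mathcal{F}^{\mathrm{fr}}$-module structure of $\ker \ttt\Xi_0$, combined with the $q$-Serre relations that also belong to $\ker \ttt\Xi_0$ (via $\ker \Xi \subseteq \ker \ttt\Xi_0$), I would inductively propagate this relation along the standard factorization $\nu = \nu_{(1)} \nu_{(2)}$ to obtain, whenever $\nu < \ttt w(\nu)$, an expression of $\nu$ modulo $\ker\ttt\Xi_0$ as a linear combination of words strictly greater than $\nu$. Lemma~\ref{lem:tgood-equiv-char} then forces $\nu \notin \tgooda$, contradicting the assumption, so $\nu \geq \ttt w(\nu)$ and $\nu \in \tLyn$.

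For part (ii), (i) immediately implies $\tgoodc = \{\varnothing\}$: any non-empty $\theta$-cuspidal word $\nu = \plyn{k} \cdots \plyn{1}$ has its leftmost Lyndon factor $\plyn{k}$ both good (Lemma~\ref{lem:tgood-good}(2)) and $\theta$-good (Lemma~\ref{lem:tgood-good}(1)), whence $\plyn{k} \in \tLyn$ by (i), contradicting $\theta$-cuspidality. The injective shuffle map of the preceding lemma therefore restricts to an injection $\tgoodb \hookrightarrow \tgooda$; combined with the reverse inclusion from Lemma~\ref{lem: tgood Lyndon props}(5), this gives $\tgoodb \subseteq \tgooda$. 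To upgrade to equality, I compare dimensions at each $\theta$-weight: the identity $|\tgoodb \cap \{\ttt\norm{\cdot} = \beta\}| = \tkpf(\beta)$ would match an independent upper bound $\dim_q \EKM_\beta \leq \tkpf(\beta)$ coming from the Enomoto--Kashiwara PBW basis of $\EKm$ at $\lambda = 0$ in \cite{Enomoto-Kashiwara-08}, or alternatively from the Varagnolo--Vasserot categorification of $\EKm$ by orientifold KLR algebras with trivial framing.

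The main obstacle I anticipate is part (i): extending the base-case relation $i \equiv \theta(i) \bmod \ker\ttt\Xi_0$ to good Lyndon words of length greater than one, since the right $\mathcal{F}^{\mathrm{fr}}$-module structure of the kernel directly modifies only the leftmost letter. The Lyndon structure via the standard factorization, combined with the $q$-Serre relations, should supply enough flexibility, but carrying this out explicitly while controlling the signs and $q$-powers in the resulting expression is the most technically delicate part of the argument.
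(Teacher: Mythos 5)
Your core argument, the dimension comparison in the last part of the paragraph on (ii), is exactly the paper's proof: the set of words whose Lyndon factors are all $\theta$-Lyndon is contained in $\tgooda$ by Lemma~\ref{lem: tgood Lyndon props}(5), and the count of such words in each $\theta$-weight is $\tkpf(\beta)$ via the bijection $\tlyn \cong \ttt\Phi^+$; since EK's Theorem~4.15 (equivalently, their PBW basis at $\lambda=0$) shows that $\dim\EKM_\beta$ also equals $\tkpf(\beta)$, the containment is an equality. You also independently note the alternative route through the Varagnolo--Vasserot categorification, which the paper itself records in Remark~3.29. So the decisive step in your proposal coincides with the paper's.

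Everything else you add is unnecessary, and the part you flag as the ``main obstacle'' is the part you should simply drop. You try to prove (i) directly, by showing that for $\nu\in\mathcal{L}\cap\tgooda$ the inequality $\nu<\ttt w(\nu)$ leads to an expression of $\nu$ as a larger word modulo $\ker\ttt\Xi_0$. The base-case relation $i-\theta(i)\in\ker\ttt\Xi_0$ is correct (with $\lambda=0$, $\varnothing\acts i = i+\theta(i)$), and $\ker\Xi\subseteq\ker\ttt\Xi_0$ is correct, but the ``inductive propagation along the standard factorization'' is only gestured at, and it is genuinely unclear how to make it work: $\ker\ttt\Xi_0$ is only a right $\mathcal{F}^{\mathrm{fr}}$-submodule, so you can postmultiply kernel elements but not rearrange internal letters at will, and the $q$-Serre relations do not obviously let you swap a prefix $\alpha_k$ for $\alpha_{-k}$ deep inside a Lyndon word. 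You should notice instead that (ii) automatically gives (i): a Lyndon word is its own Lyndon factorization, so once $\tgooda=\tgoodb$ is known, a $\theta$-good Lyndon word lies in $\tgoodb$ iff it lies in $\tlyn$, and the other inclusion is Lemma~\ref{lem: tgood Lyndon props}(3). Likewise, the detour through $\tgoodc=\{\varnothing\}$ and the injection $\tgoodc\times\tgoodb\to\tgooda$ adds nothing: $\tgoodb\subseteq\tgooda$ holds by definition, and the dimension count alone upgrades it to an equality. Stripped of the unfinished kernel argument and the redundant $\tgoodc$ discussion, your proposal is the paper's proof.
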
 

\begin{proof}
Let $S$ be the set of all words $\nu = \plyn{k} \cdots \plyn{1}$ with $\nu^{\lan1\ran} \geq \cdots \geq \nu^{\lan k \ran}$ and each $\plyn{i} \in \tgooda$. Lemma \ref{lem:monbas} and part (5) of Lemma \ref{lem: tgood Lyndon props} imply that $\{ \ttt\mathbf{m}_\nu \mid \nu \in S \}$ is contained in the monomial basis $\{ \ttt\mathbf{m}_\nu \mid \nu \in \tgooda  \}$ of $\EKM$. Let $\EKM' \subseteq \EKM$ be the span of the former. By construction, the generating series of the dimensions of the homogeneous components 
of $\EKM'$ is equal to $\prod_{\beta \in \ttt \Phi^+} \frac{1}{1-\exp \beta}$. On the other hand, it follows from \cite[Theorem 4.15]{Enomoto-Kashiwara-08} that this is also the generating series of the dimensions of the homogeneous components of $\EKM$. Hence $\EKM' = \EKM$. The statement follows.
\end{proof} 

\begin{remark}
Instead of appealing to \cite[Theorem 4.15]{Enomoto-Kashiwara-08} in the proof of Proposition \ref{prop: dim of V modules tkpf}, one could alternatively use the categorification theorem \cite[Theorem 8.31]{VV-HecB} (cited as Theorem \ref{thm: VV EKmod categ} below), together with the geometric realization of orientifold KLR algebras from \cite{VV-HecB} and the classification of isomorphism classes of symplectic/orthogonal representations of symmetric quivers from \cite{Derksen-Weyman}. Indeed, this approach appears promising in generalizing the construction of bases for $\ttt\mathbf{V}(\lambda)$ to the $\lambda \neq 0$ case. 
\end{remark}

\subsection{Symmetric words} 

A word $\nu \in \tlyn$ is called \emph{symmetric} if ${}^\theta w(\nu) = \nu$ and \emph{non-symmetric} otherwise. 
Given $\nu \in \tgooda$, let 
 $\nu_\theta$ be the word obtained from $\nu$ by deleting its symmetric Lyndon factors and $\nu^\theta$ the word obtained by deleting the non-symmetric ones. 
We say that $\nu \in \tgooda$ is \emph{symmetric} if $\nu = \nu^\theta$. 
For each $k\geq1$, let $\xi_k$ be the unique symmetric word in $\tlyn$ with $\norm{\xi_k} = \beta_{-2k+1,2k-1}$. 

\begin{lemma} \label{lem: sym aux} 
Let $\nu \in \tlyn$. If $\nu < \xi_k$ then $\xi_{k+1}$ is a subword of $\nu$. 
Hence $\xi_k > \xi_l$ if and only if $k < l$. 
\end{lemma}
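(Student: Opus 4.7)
The plan is to make the combinatorics entirely explicit. By Assumption \ref{assumption} and the discussion of good Lyndon words for type $\mathtt{A}_\infty$ in \cite[\S 8.1]{Leclerc-04}, the (unique) good Lyndon word of weight $\beta_{m,n}$ (for odd $m \le n$) is the decreasing sequence $\alpha_n\alpha_{n-2}\cdots\alpha_m$; denote it $\nu(m,n)$. A direct computation from the definition $\ttt w(l)=-(n-l+1)$ gives $\ttt w(\nu(m,n)) = \nu(-n,-m)$. Comparing these two words in anti-lex order at the first letter, the $\theta$-Lyndon condition $\nu \ge \ttt w(\nu)$ translates into the single inequality $m+n \ge 0$. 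Consequently, $\tlyn = \{\nu(m,n) : m \le n,\ m+n \ge 0\}$, the symmetric elements of $\tlyn$ are those with $m=-n$, and in particular $\xi_k = \nu(-(2k-1),\,2k-1)$, a word of length $2k$ listing every odd integer from $2k-1$ down to $-(2k-1)$.

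Next I would compare an arbitrary $\nu = \nu(m,n) \in \tlyn$ with $\xi_k$. Anti-lex order on our decreasing words coincides with ordinary lex on their reverses, which are the increasing sequences $\alpha_m\alpha_{m+2}\cdots\alpha_n$ and $\alpha_{-(2k-1)}\alpha_{-(2k-3)}\cdots\alpha_{2k-1}$. The comparison is therefore decided at the first letter: $\nu < \xi_k$ forces either $m < -(2k-1)$, or $m = -(2k-1)$ together with $n < 2k-1$. The second case contradicts $m+n \ge 0$, so we must have $m \le -(2k+1)$, whence also $n \ge -m \ge 2k+1$.

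In this range $\nu = \alpha_n\alpha_{n-2}\cdots\alpha_m$ contains every odd integer between $m$ and $n$ in decreasing order, and $2k+1, 2k-1, \ldots, -(2k+1)$ is a contiguous block of such indices. Hence $\xi_{k+1} = \alpha_{2k+1}\alpha_{2k-1}\cdots\alpha_{-(2k+1)}$ appears as a (consecutive) factor of $\nu$, proving the subword assertion. The second claim $\xi_k > \xi_l \iff k < l$ is then immediate: for $l > k$ the word $\xi_l = \nu(-(2l-1),2l-1)$ has $-(2l-1) \le -(2k+1)$, so the first part applies and gives $\xi_l < \xi_k$; the reverse direction is identical.

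The only real obstacle is careful bookkeeping with the anti-lex convention and the explicit action of $\ttt w$ on words; once those are pinned down, the argument is essentially a two-line comparison of first letters.
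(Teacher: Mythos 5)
Your argument is correct and amounts to spelling out explicitly what the paper's one-line proof (which simply cites part (6) of Lemma \ref{lem: tgood Lyndon props}) treats as immediate: the identification $\tlyn = \{\nu(m,n) : m \le n,\ m+n \ge 0\}$ via the explicit form $\alpha_n\alpha_{n-2}\cdots\alpha_m$ of good Lyndon words, together with the elementary anti-lexicographic comparison showing $\nu(m,n) < \xi_k$ forces $m \le -(2k+1)$ and hence $n \ge 2k+1$. The only cosmetic wrinkle is the phrase ``comparing \dots at the first letter'' for the $\theta$-Lyndon condition, which is harmless here only because $\nu(m,n)$ and $\nu(-n,-m)$ are mirror images, so comparing from either end gives the same inequality $m+n\ge 0$.
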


\begin{proof}
The statement follows immediately from part (6) of Lemma \ref{lem: tgood Lyndon props}. 
\end{proof} 

\begin{assum} \label{assumption2} 
From now on until the end of \S \ref{sec: EK} we assume that $\lambda = 0$. We abbreviate $\tFFz = \tFFz(0)$ and $\EKmz = \EKmz(0)$.  
\end{assum} 

\begin{lemma} \label{lem: smallest words} 
Suppose that $\nu \in \tgooda$ is symmetric or $\nu \in \tlyn$. Then $\nu$ is the smallest word in $\ttt J_+^{\ttt|\nu|}$. 
\end{lemma}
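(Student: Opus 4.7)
The plan is to induct on $\ell(\nu)$, handling both cases in parallel. The base case $\ell(\nu) = 1$ is immediate since $\nu = \alpha_k$ with $k > 0$ is the unique element of ${}^\theta J_+^{\alpha_k + \alpha_{-k}}$. For the inductive step, take $\mu \in {}^\theta J_+^{\tnorm{\nu}}$ with $\mu \neq \nu$, write its Lyndon factorization $\mu = \plyn{p} \cdots \plyn{1}$ with each $\plyn{i} \in \tlyn$ (using Proposition \ref{prop: dim of V modules tkpf}(ii) for $\lambda = 0$), and let $\alpha_{a_1}$ denote the smallest letter of $\plyn{1}$, with largest letter $\alpha_{b_1}$. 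The strategy is to show that $\alpha_{a_1}$ is at least the rightmost letter of $\nu$, which immediately gives $\mu > \nu$ when strict, and to reduce the equality sub-case to a shorter instance.

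For Case 1 ($\nu = \alpha_k \cdots \alpha_{k-2l} \in \tlyn$, $k \geq l$), the rightmost letter of $\nu$ is $\alpha_{k-2l}$. The bound $a_1 \geq k-2l$ follows from a multiplicity argument: the letters $\alpha_j$ with $j \in [a_1, -a_1]$ appear with multiplicity $2$ in $\tnorm{\plyn{1}}$, hence must appear with multiplicity $\geq 2$ in $\tnorm{\nu}$. A direct computation of the latter---using the overlap structure of $\beta_{k-2l, k}$ and $\beta_{-k, -(k-2l)}$---shows its multiplicity-$2$ region is exactly $[k-2l, 2l-k] \cap \zodd$ (nonempty only when $k \leq 2l$, and empty otherwise, in which case $a_1 > 0 \geq k - 2l$ automatically). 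When $a_1 = k - 2l$ and $b_1 < k$, we write $\nu = \nu' \plyn{1}$ with $\nu' = \alpha_k \cdots \alpha_{b_1+2} \in \tlyn$ shorter, apply the inductive hypothesis to $\mu_{\mathrm{rest}} := \plyn{p} \cdots \plyn{2}$ (whose $\theta$-weight matches $\tnorm{\nu'}$) to conclude $\mu_{\mathrm{rest}} \geq \nu'$, and rule out equality (which would force $\mu = \nu$, contradicting $\mu \neq \nu$).

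For Case 2 ($\nu = (\xi_{c_r})^{e_r}\cdots (\xi_{c_1})^{e_1}$ symmetric, $c_r > \cdots > c_1 \geq 1$), the rightmost letter of $\nu$ is $\alpha_{1-2c_1}$, and the crucial step is ruling out $a_1 \leq -(2c_1+1)$. Supposing this, the Lyndon ordering $\plyn{i} \leq \plyn{1}$ forces $a_i \leq -(2c_1+1)$ and $b_i \geq 2c_1+1$ for every $i$, so each $\plyn{i}$ contributes multiplicity exactly $2$ to both $\alpha_{2c_1-1}$ and $\alpha_{2c_1+1}$ in $\tnorm{\mu}$. A direct count shows the multiplicity of $\alpha_{2c_1-1}$ in $\tnorm{\nu}$ is $2k$ (every $\xi_{c_i}$ contains it) while that of $\alpha_{2c_1+1}$ is $2(k-e_1)$ (only factors with $c_i > c_1$ contribute), where $k = \sum_i e_i$. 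Equating yields $2p = 2k$ and $2p = 2(k-e_1)$, hence $e_1 = 0$, a contradiction. Once $a_1 \geq 1 - 2c_1$ is established, the three sub-cases ($a_1 > 1-2c_1$; $\plyn{1} = \xi_{c_1}$; $\plyn{1} \neq \xi_{c_1}$ with $a_1 = 1-2c_1$) are handled respectively by (i) direct rightmost-letter comparison, (ii) inductive hypothesis applied to the shorter symmetric word $\nu''$ obtained from $\nu$ by removing one rightmost copy of $\xi_{c_1}$, and (iii) comparing the letter at position $2c_1+1$ from the right---which in $\mu$ is $\alpha_{2c_1+1}$ (the next letter of $\plyn{1}$ beyond the suffix $\xi_{c_1}$) and in $\nu$ is either $\alpha_{1-2c_1}$ or $\alpha_{1-2c_2}$ (depending on whether $e_1 > 1$ or $e_1 = 1$), in both situations strictly smaller.

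The main obstacle is the impossibility argument for $a_1 \leq -(2c_1+1)$ in Case 2, which requires tracking the multiplicities of two carefully chosen letters simultaneously to extract the contradiction $e_1 = 0$; the rest of the proof is routine bookkeeping with Lyndon factorizations and induction on length.
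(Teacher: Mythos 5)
Your proof is correct in substance but takes a genuinely different route from the paper's. The paper handles the $\theta$-Lyndon non-symmetric case by observing that $\ttt\norm{\mu}=\ttt\norm{\nu}$ forces $\norm{\mu}=\norm{\nu}$ and then citing \cite[Lemma 5.9]{Kleshchev-Ram-11}; for the symmetric case it appeals to the Melan\c{c}on comparison of Lyndon factorizations to isolate the first diverging factor $\plyn{a}=\xi_d$ directly, and derives a contradiction by comparing the coefficients $c_{2d-1}$ and $c_{2d+1}$ in $\ttt\norm{\bar\mu}=\ttt\norm{\bar\nu}$. You instead run a single induction on $\ell(\nu)$ and compare $\mu$ and $\nu$ starting from the rightmost letter, reducing to a shorter word whenever the rightmost Lyndon factors agree; the core multiplicity argument (tracking $\alpha_{2c_1-1}$ versus $\alpha_{2c_1+1}$) is the same idea as the paper's, but you reach it without invoking Melan\c{c}on, and your $\theta$-Lyndon case is entirely self-contained rather than citing Kleshchev--Ram. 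The trade-off is that your version is noticeably longer, with three subcases in the symmetric step, while the paper's is more compressed by leaning on external results.

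One small error to flag in Case 1: when the multiplicity-$2$ region of $\ttt\norm{\nu}$ is empty you have $k>2l$, so $k-2l>0$, and the displayed chain ``$a_1>0\geq k-2l$'' cannot hold. The conclusion $a_1\geq k-2l$ is still true, but for a different reason: since all coefficients of $\ttt\norm{\plyn{i}}$ are nonnegative and sum to $\ttt\norm{\nu}$, one has $\supp\bigl(\ttt\norm{\plyn{1}}\bigr)\subseteq\supp\bigl(\ttt\norm{\nu}\bigr)=[k-2l,k]\cup[-k,2l-k]$, and when $k>2l$ the two intervals are disjoint with the first lying in $\Z_{>0}$; as $a_1>0$ forces $\alpha_{a_1}$ into the first interval, $a_1\geq k-2l$ follows from support containment rather than from the multiplicity-$2$ bound. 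With this correction the rest of Case 1 (the $b_1=k$ dichotomy and the inductive reduction to $\nu'=\alpha_k\cdots\alpha_{b_1+2}\in\tlyn$) goes through.
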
 

\begin{proof}
Abbreviate $\beta = \ttt|\nu|$. First assume that $\nu \in \tgooda$ is symmetric. 
Let $\nu = \plyn{k}\cdots\plyn{1}$ be its Lyndon factorization. Suppose that there exists a word $\mu = \pplyn{\mu}{l} \cdots \pplyn{\mu}{1} \allowbreak \in \ttt J_+^{\ttt|\nu|}$ with $\mu < \nu$. Then, as explained before Lemma 4.1 in \cite{Melancon}, there is an $a$ such that $\pplyn{\mu}{b} = \plyn{b}$ for $b < a$ and $\pplyn{\mu}{a} < \plyn{a}$. Hence $\plyn{a} > \pplyn{\mu}{a} \geq \hdots \geq \pplyn{\mu}{l}$. Write $\bar{\nu} = \pplyn{\nu}{k} \cdots \pplyn{\nu}{a}$ and $\bar{\mu} = \pplyn{\mu}{l} \cdots \pplyn{\mu}{a}$. 

Since $\plyn{a}$ is symmetric, we have $\plyn{a} = \xi_d$ for some $d \geq 1$. 
By Proposition \ref{prop: dim of V modules tkpf} and Lemma \ref{lem: sym aux}, $\xi_{d+1}$ is a subword of each $\pplyn{\mu}{i}$ $(i \geq a)$. 
In particular, each 
$\pplyn{\mu}{i}$ contains $\alpha_{\pm(2d-1)},\alpha_{\pm(2d+1)}$.   
Hence, if we write $\ttt\norm{\bar{\nu}} = \ttt\norm{\bar{\mu}} = \sum_{i \in \nodd} c_i (\alpha_i  + \alpha_{-i})$, then $c_{2d+1} = c_{2d-1}$. On the other hand, since each $\plyn{i}$ $(i \geq a)$ is a symmetric good Lyndon word smaller than $\plyn{a}$, Lemma \ref{lem: sym aux}  implies that each $\plyn{i}$ contains $\plyn{a}$ as a subword. 
Hence $c_{2d+1} < c_{2d-1}$, which is a contradiction. 

Secondly, assume that $\nu \in \tlyn$. We may assume $\nu$ is not symmetric. In that case, observe that if $\ttt\norm{\mu} = \ttt\norm{\nu}$ for some $\mu \in \tgooda$, then $\norm{\mu} = \norm{\nu}$. The result now follows from \cite[Lemma 5.9]{Kleshchev-Ram-11}. 
\end{proof} 

\subsection{PBW and canonical bases} 

Let us first recall some basic facts about PBW bases. For the moment let us restrict $(J,\cdot)$ to a finite Cartan subdatum of type $A_m$. 
%We have fixed the standard total order on $J$. 
By \cite[Proposition 26]{Leclerc-04}, the anti-lexicographic order $\plyn{1} > \hdots >  \plyn{N}$ on the set of good Lyndon words induces, via the bijection from part (3) of Proposition \ref{pro: lyndon}, a convex order $\beta_1 > \hdots > \beta_N$ on the set of positive roots.  
This convex order arises from a unique reduced decomposition $w_0 = s_{i_N}\cdots s_{i_1}$ in the usual way:
$\beta_N = \alpha_{i_N}$, $\beta_{N-1} = s_{i_N}(\alpha_{i_{N-1}})$, $\cdots$, $\beta_1 = s_{i_N} \cdots s_{i_2}(\alpha_1)$. 
Let $P_{\plyn{k}} = T''_{i_N,1}\cdots T''_{i_{k+1},1}(f_{i_k})$, where $T''_{i,1}$ is the braid group operation from \cite[\S 37.1]{lusztig-94} with $e=-1$ and $\upsilon_i = q$. Set $P_{\plyn{k}}^{(l)} = \frac{1}{[l]!} P_\nu^l$ and, 
given $\nu = (\plyn{N})^{l_N} \cdots (\plyn{1})^{l_1} \in \gooda$, let $P_\nu = P_{\plyn{N}}^{(l_N)} \cdots P_{\plyn{1}}^{(l_1)}$ and $\mathbf{P}_\nu = \Psi(P_\nu)$. Taking an appropriate limit $m \to \infty$, \cite[Proposition 41.1.4]{lusztig-94} implies that $\{ P_\nu \mid \nu \in \gooda \}$ is an $\Ql$-basis of $\fint$.

Next, given $\nu \in \tlyn$, let 
\[
P_{\nu}^{[n]} = \left\{ 
\begin{array}{l l}
P_{\nu}^{(n)} & \mbox{ if } \nu \mbox{ is not symmetric,} \\ \\
\frac{1}{[2n]!!} P_{\nu}^n & \mbox{ if } \nu \mbox{ is symmetric. } 
\end{array} \right. 
\] 
Given $\nu =  (\plyn{l})^{n_l} \cdots (\plyn{1})^{n_1} \in \tgooda$, define 
\[
\ttt P_{\nu} = \sigma \left(\prod_{1 \leq i \leq l} P_{\plyn{i}}^{[n_i]} \right) \cdot v_0, \quad \ttt\mathbf{P}_\nu = \ttt\Psi(P_\nu).
\] 

\begin{prop} 
The set $\{ \ttt P_\nu \mid \nu \in \tgooda \}$ is an $\Ql$-basis of $\EKmilz$. 
\end{prop}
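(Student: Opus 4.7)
The plan is to establish three things in order: upper-triangularity of $\{\ttt\mathbf{P}_\nu\}$ against the monomial basis $\{\ttt\mathbf{m}_\nu\}$, a dimension count matching $\tkpf(\beta)$, and finally $\Ql$-integrality. The first two will show that $\{\ttt P_\nu \mid \nu \in \tgooda\}$ is a $\Qq$-basis of $\EKmz$; the third will upgrade this to an $\Ql$-basis of $\EKmilz$.

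First I would compute the leading word of $\ttt\mathbf{P}_\nu = \ttt\Psi(\ttt P_\nu)$. Using the identity $\ttt\Psi(y\cdot v_0) = \varnothing \acts \Psi(\sigma(y))$ established inside the proof of Proposition \ref{pro:shufflemodulereal}, together with the fact that $\Psi$ is an algebra homomorphism, one reduces to identifying $\max\bigl(\varnothing \acts \prod_i \Psi(P_{\plyn{i}})^{n_i}\bigr)$ (reading the factors in decreasing order of $\plyn{i}$). Since $\mathbf{P}_{\plyn{i}} = \Psi(P_{\plyn{i}})$ has leading word $\plyn{i}$ by \cite[Proposition 29]{Leclerc-04}, Lemma \ref{lem:maxofshuffle} combined with part~(2) and~(4) of Lemma \ref{lem: tgood Lyndon props} gives $\max(\ttt\mathbf{P}_\nu) = \nu$ with some coefficient $c_\nu\in\Qq^\times$. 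Consequently the change of basis from $\{\ttt\mathbf{P}_\nu\}$ to the monomial basis of Lemma \ref{lem:monbas} is upper triangular with nonzero diagonal, hence $\{\ttt P_\nu\}$ is $\Qq$-linearly independent.

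Next, Proposition \ref{prop: dim of V modules tkpf} gives $\tgooda = \tgoodb$ and $\dim_q \EKM_\beta = \tkpf(\beta)$. Via the bijection $\tlyn \isoto \ttt\Phi^+$ of Lemma \ref{lem: tgood Lyndon props}(6), the PBW indexing set $\{\nu = (\plyn{l})^{n_l}\cdots (\plyn{1})^{n_1} \in \tgoodb\}$ is in natural bijection with multipartitions of $\beta$ by elements of $\ttt\Phi^+$, so its cardinality in each weight equals $\tkpf(\beta)$. Combined with the previous paragraph this forces $\{\ttt P_\nu\}$ to be a $\Qq$-basis of $\EKmz$.

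The remaining step, and the main obstacle, is integrality over $\Ql$. For Lyndon factors $\plyn{i}$ that are \emph{not} symmetric, the usual divided powers $P_{\plyn{i}}^{(n_i)}$ lie in $\fint$ by Lusztig's theorem on canonical bases, so their contribution to $\sigma(\prod P_{\plyn{i}}^{[n_i]}) \cdot v_0$ lies in $\fint \cdot v_0 = \EKmilz$. For a symmetric factor $\xi$ the prescribed normalization gives $P_\xi^n/[2n]!! = P_\xi^{(n)}/\prod_{k=1}^n(q^k+q^{-k})$, which is \emph{not} integral in $\fint$. The plan here is to exploit a cancellation specific to the action on $v_0$: because $v_0$ is annihilated by all $E_i$ and because $\xi$ is $\theta$-stable, one can prove by induction on $n$, using Lemma \ref{lem:qder1} to expand $E_i \cdot \sigma(P_\xi)^n v_0$, that $\sigma(P_\xi)^n v_0$ is divisible by $\prod_{k=1}^n (q^k+q^{-k})$ inside $\fint \cdot v_0$. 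This is essentially the content of the divided-power identities used in \cite[\S 3]{Enomoto-Kashiwara-08}, adapted to our setting. After this is in hand, one checks that the leading coefficient $c_\nu$ from Step~1 is in fact a unit in $\Ql$ (the cancellation $[2n]!!/[n]!$ is precisely calibrated for this), so the transition matrix $(\ttt\mathbf{P}_\nu) \mapsto (\ttt\mathbf{m}_\nu)$ is unitriangular over $\Ql$ against the integral dual basis, yielding the $\Ql$-basis conclusion.
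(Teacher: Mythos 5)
Your Steps 1--2 (the $\Qq$-basis statement) are sound and parallel to the paper's own remark, which derives the $\Qq$-version from Lemma~\ref{lem: trans ml} together with part~(1) of Lemma~\ref{lem: pbw conjugate}; the dimension count via $\tkpf(\beta)$ is a slightly more roundabout way of reaching the same conclusion. The paper handles the genuinely hard part --- the $\Ql$-basis statement --- by simply citing \cite[Lemma~5.1]{Enomoto-Kashiwara-08}, whereas you attempt a direct argument. That attempt has a real gap in the final step.

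The problem is your claim that the leading coefficient $c_\nu$ is ``a unit in $\Ql$'' and that the transition matrix from $\{\ttt\mathbf{P}_\nu\}$ to $\{\ttt\mathbf{m}_\nu\}$ is therefore unitriangular over $\Ql$. From $\ttt{\mathbf{l}}_\nu = \tkap\,\tpbw{\nu}$ (Lemma~\ref{lem: pbw conjugate}(1)) and the triangular relation $\ttt\mathbf{l}_\nu = c_{\nu\nu}\ttt\mathbf{m}_\nu + \sum_{\mu>\nu}c_{\nu\mu}\ttt\mathbf{m}_\mu$ with $c_{\nu\nu}$ a unit (Lemma~\ref{lem: trans ml}), the diagonal entry of the transition matrix from $\{\tpbw{\nu}\}$ to $\{\ttt\mathbf{m}_\mu\}$ is $c_{\nu\nu}\tkap^{-1}$, and $\tkap = \prod_i [n_i]!!\cdot\prod_j[n_j]!$ is certainly not a unit in $\Ql = \Z[q^{\pm1}]$ whenever some $n_i > 1$. (The identity $[2n]!!/[n]! = \prod_{k=1}^n(q^k+q^{-k})$ that you invoke produces another non-unit, not a cancellation.) Moreover, even a genuine unitriangular transition to $\{\ttt\mathbf{m}_\nu\}$ would not suffice: the monomial vectors $\ttt\mathbf{m}_\nu$ lie in $\EKmilz$, but they do not form an $\Ql$-basis of it (already in weight $2\alpha_i$ one has $\ttt\mathbf{m}_{ii} = [2]!\,f_i^{(2)}v_0$), so triangularity against them cannot yield $\Ql$-spanning. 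Your sketch of the induction that $\sigma(P_\xi)^n v_0$ is divisible by $[2n]!!$ inside $\EKmilz$ correctly identifies where the integrality must come from, but it is not a proof --- that divisibility is precisely the content of \cite[Lemma~5.1]{Enomoto-Kashiwara-08} (building on the divided-power structure for $\theta$-fixed roots developed in \S3 of that paper), which you would need to either reproduce in detail or cite, as the paper does.
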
 

\begin{proof}
See \cite[Lemma 5.1]{Enomoto-Kashiwara-08}. Note that the weaker statement that $\{ \ttt P_\mu \}$ is a $\Qq$-basis of $\EKmilz$ follows from Lemma \ref{lem: trans ml} and part (1) of Lemma \ref{lem: pbw conjugate} below. 
\end{proof}

We call $\{ \ttt \mathbf{P}_\nu \mid \nu \in \tgooda \}$ the PBW basis of $\EKlow$. 
By \cite[Proposition 30]{Leclerc-04}, for any $\nu \in \gooda$, there exists $\kappa_\nu = \overline{\kappa_\nu} \in \Ql$ with $\mathbf{l}_\nu = \kappa_\nu \mathbf{P}_\nu$. Since we are working with the standard ordering of $J$,  \cite[Proposition 56]{Leclerc-04} implies that $\kappa_\nu = 1$ for any $\nu \in \lyn$. 
If $\nu = (\plyn{l})^{n_l} \cdots (\plyn{1})^{n_1} \in \tgooda$ then $\kappa_\nu = \prod_{i=1}^l [n_i]!$. Set 
\[
\tkap = \kappa_\nu \cdot \prod_{\substack{i=1\\ \plyn{i} \text{symm.}}}^l \prod_{j=1}^{n_i} (q^j + q^{-j}) = \prod_{\substack{i=1\\ \plyn{i} \text{symm.}}}^l [n_i]!! \cdot \prod_{\substack{i=1\\ \plyn{i} \text{non-symm.}}}^l [n_i]!. 
\]

\begin{lemma} \label{lem: pbw conjugate}
Let $\nu \in \tgooda$. 
\begin{enumerate} 
\item $\ttt{\mathbf{l}}_\nu = \tkap \tpbw{\nu}$ and $\tkap = \overline{\tkap} \in \Ql$.  
\item We have
\[
\overline{\tpbw{\nu}} = \tpbw{\nu} + \sum_{\mu > \nu} d_{\nu\mu} \tpbw{\mu}
\]
for some $d_{\nu\mu} \in \Ql$. 
\end{enumerate} 
\end{lemma}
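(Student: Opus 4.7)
The plan is to reduce both parts to standard facts about the bases $\{P_\nu\}$ of $\ff$ and $\{\ttt P_\nu\}$ of $\EKmilz$, via the injection $\ttt\Psi$ of Proposition \ref{pro:shufflemodulereal}.

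For part (1), I first use the identification $\ttt\Psi(y \cdot v_0) = \varnothing \acts \Psi(\sigma(y))$ from the proof of Proposition \ref{pro:shufflemodulereal}, together with $\sigma^2 = \mathrm{id}$ on $\ff$, to rewrite
\[
\ttt\mathbf{l}_\nu = \varnothing \acts \Psi([\nu]), \qquad \tpbw{\nu} = \varnothing \acts \Psi\Bigl(\prod_{i=1}^{l} P_{\plyn{i}}^{[n_i]}\Bigr).
\]
Leclerc's relation $\mathbf{l}_\nu = \kappa_\nu \mathbf{P}_\nu$, recalled just before the lemma, then gives $\Psi([\nu]) = \kappa_\nu \Psi(\prod_i P_{\plyn{i}}^{(n_i)})$, so the comparison reduces to evaluating the ratio $P_{\plyn{i}}^{[n_i]}/P_{\plyn{i}}^{(n_i)}$. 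By definition this ratio equals $1$ for non-symmetric $\plyn{i}$, while for symmetric $\plyn{i}$ it equals $[n_i]!/[2n_i]!! = \prod_{j=1}^{n_i}(q^j+q^{-j})^{-1}$, via the identity $[2j] = [j](q^j+q^{-j})$. Absorbing the inverses of these correction factors into $\kappa_\nu$ reproduces the scalar $\tkap$ of the statement. Finally, since $\overline{[n]} = [n]$, both the ordinary and the double $q$-factorials are bar-invariant, forcing $\tkap = \overline{\tkap} \in \Ql$.

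For part (2), I would invoke \cite[Lemma 5.1]{Enomoto-Kashiwara-08}, where the analogous bar-triangularity is established directly inside $\EKmilz$:
\[
\overline{\ttt P_\nu} = \ttt P_\nu + \sum_{\mu > \nu} c_{\nu\mu}\, \ttt P_\mu, \qquad c_{\nu\mu} \in \Ql.
\]
Because $\ttt\Psi \colon \EKmz \to \tFFz$ is a $\Qq$-linear injection which, by construction, intertwines the bar involution of Proposition \ref{pro:EKm}(4) on its source with the bar involution transported to its image, applying $\ttt\Psi$ to the above identity and using $\tpbw{\nu} = \ttt\Psi(\ttt P_\nu)$ yields the desired expression with the same coefficients $c_{\nu\mu}$.

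The main bookkeeping step is the factor tracking in part (1): each symmetric Lyndon factor contributes a product of terms $(q^j + q^{-j})$ that must be collapsed via $[2n]!! = [n]!\prod_{j=1}^n(q^j + q^{-j})$ to produce the precise expression for $\tkap$ in the statement. Part (2) is then a routine transport-of-structure argument, with the substantive content delegated to Enomoto--Kashiwara.
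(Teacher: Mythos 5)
Your argument for part (1) is correct and fills in what the paper compresses into the phrase ``follows directly from the definitions.'' The reduction $\ttt\mathbf{l}_\nu = \varnothing \acts \Psi([\nu])$, $\tpbw{\nu} = \varnothing \acts \Psi(\prod_i P_{\plyn{i}}^{[n_i]})$ via $\ttt\Psi(y\cdot v_0) = \varnothing \acts \Psi(\sigma(y))$ and $\sigma^2 = \id$, followed by the factorial bookkeeping $[2n]!! = [n]!\prod_{j=1}^n(q^j+q^{-j})$, does produce exactly $\ttt\mathbf{l}_\nu = \tkap \tpbw{\nu}$, and the bar-invariance of $\tkap$ is clear since each $[n]$ and $(q^j+q^{-j})$ is bar-invariant.

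For part (2), however, your route diverges from the paper's in a way that leaves a gap. You outsource the bar-triangularity to \cite[Lemma~5.1]{Enomoto-Kashiwara-08}, but the paper cites that lemma only for the statement that $\{\ttt P_\nu\}$ is an $\Ql$-basis of $\EKmilz$, not for triangularity of the bar involution; and the remark after Theorem~\ref{thm: can vs pbw} emphasizes that the paper's proof is \emph{deliberately} different from Enomoto--Kashiwara's and does not rely on their methods. Even granting that \cite{Enomoto-Kashiwara-08} proves some form of bar-triangularity en route to their Theorem~5.5, the order they work with is a convex order on $\ttt\Phi^+$ coming from a chosen reduced word, and you would need to verify that it coincides with the anti-lexicographic order on $\tgooda$ used here (via the bijection of part~(6) of Lemma~\ref{lem: tgood Lyndon props}); this matching is not automatic and is precisely the kind of combinatorial content the Lyndon-word framework is meant to supply. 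The paper avoids both issues by arguing internally: the monomial basis $\{\ttt\mathbf{m}_\nu\}$ is bar-invariant because $\overline{F_{\sigma(\nu)}v_\lambda} = F_{\sigma(\nu)}v_\lambda$, Lemma~\ref{lem: trans ml} gives triangularity of the $\{\ttt\mathbf{l}_\nu\}\leftrightarrow\{\ttt\mathbf{m}_\nu\}$ transition with explicit diagonal, and combined with your part~(1) this forces $A_{\mathbf{P}} = \overline{A}A^{-1}$ to be unitriangular with entries in $\Ql$. If you want your part~(2) to stand, either prove the compatibility of orders, or replace the citation with the self-contained monomial-basis argument.
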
 

\begin{proof} 
The first part follows directly from the definitions. 
Let $A_{\mathbf{P}}$, $A_{\mathbf{m}}$ and $A$ be the transition matrices between $\{ \tpbw{\nu} \}$ and $\{ \overline{\tpbw{\nu}} \}$, $\{ \mathbf{m}_\nu \}$ and $\{ \overline{\mathbf{m}_\nu} \}$, as well as $\{ \tpbw{\nu} \}$ and $\{ \mathbf{m}_\nu \}$, respectively. By definition, $A_{\mathbf{m}} = \id$. 
Hence 
$A_{\mathbf{P}} = \overline{A} A^{-1}$. Lemma \ref{lem: trans ml} implies that $\overline{A}$ and $A^{-1}$ are both lower triangular, with eigenvalues $\overline{\tkap}$ and $\tkap^{-1}$. Part (1) now implies that $A_{\mathbf{P}}$ is indeed lower unitriangular. Since $\{ \tpbw{\nu} \}$ form an $\Ql$-basis of $\EKlow$ and $\overline{\EKlow} = \EKlow$, we have $d_{\nu\mu} \in \Ql$. 
\end{proof}

\begin{theorem} \label{thm: can vs pbw} 
There is a unique $\Ql$-basis $\{ \tcan{\nu} \mid \nu \in \tgooda \}$ of $\EKlow$, called the \emph{canonical basis},  such that 
\[
\tcan{\nu} = \tpbw{\nu} + \sum_{\mu > \nu} c_{\nu\mu} \tpbw{\mu}, 
\] 
$c_{\nu\mu} \in q\Z[q]$ and $\overline{\tcan{\nu}} = \tcan{\nu}$. Moreover, 
$(\tcan{\nu},\tcan{\mu})_{q=0} = \delta_{\nu,\mu}$. 
\end{theorem}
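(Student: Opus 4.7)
The plan is to follow the classical Lusztig--Kashiwara recipe for canonical bases, adapted to the orientifold setting. I will first use the triangularity of the bar involution on the PBW basis from Lemma \ref{lem: pbw conjugate}(2) to produce $\tcan{\nu}$ by downward induction, and then exploit the compatibility between the PBW, monomial and dual monomial bases (Lemmas \ref{lem: trans ml} and \ref{lem:monbas}) to deduce the orthogonality at $q = 0$.

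For existence and uniqueness, I proceed by downward induction on the anti-lexicographic order on $\tgooda$. For the maximal element set $\tcan{\nu_{\max}} = \tpbw{\nu_{\max}}$, which is manifestly bar-invariant. Assuming $\{\tcan{\mu}\}_{\mu > \nu}$ has been constructed so that it forms a $\Ql$-basis of $\bigoplus_{\mu > \nu} \Ql \tpbw{\mu}$ of the desired form, I use Lemma \ref{lem: pbw conjugate}(2) to rewrite
\[ \overline{\tpbw{\nu}} - \tpbw{\nu} \;=\; \sum_{\mu > \nu} e_{\nu\mu} \tcan{\mu}, \qquad e_{\nu\mu} \in \Ql. \]
The left-hand side is anti-bar-invariant and each $\tcan{\mu}$ is bar-invariant, which forces $\overline{e_{\nu\mu}} = -e_{\nu\mu}$. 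Hence there is a unique $a_{\nu\mu} \in q\Z[q]$ with $a_{\nu\mu} - \overline{a_{\nu\mu}} = e_{\nu\mu}$, obtained by retaining the strictly positive powers of $q$ in $e_{\nu\mu}$, and setting $\tcan{\nu} := \tpbw{\nu} + \sum_{\mu > \nu} a_{\nu\mu} \tcan{\mu}$ yields the required bar-invariant element. Uniqueness is automatic: the difference of two candidates would be a bar-invariant $q\Z[q]$-linear combination of $\{\tcan{\mu}\}_{\mu > \nu}$, hence zero.

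For the orthogonality $(\tcan{\nu}, \tcan{\mu})_{q=0} = \delta_{\nu\mu}$, the key step is to establish that the PBW basis is \emph{almost orthogonal}, i.e., $(\tpbw{\nu}, \tpbw{\mu}) \equiv \delta_{\nu\mu} \pmod{q\Z[[q]]}$. The defining triangularity of $\{\tcan{\nu}\}$ together with $c_{\nu\mu} \in q\Z[q]$ gives $\tcan{\nu} \equiv \tpbw{\nu} \pmod{q \EKmil}$, so the canonical basis inherits near-orthogonality and therefore is orthonormal at $q = 0$. The near-orthogonality of the PBW basis can be extracted as follows: expand both $\tpbw{\nu}$ and its dual with respect to $(\cdot, \cdot)$ in the monomial basis $\{\ttt\mathbf{m}_\nu\}$ and its dual $\{\ttt\mathbf{m}_\nu^*\}$ of Lemma \ref{lem:monbas}, for which the pairing is the identity. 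The transition matrix from $\{\tpbw{\nu}\}$ to $\{\ttt\mathbf{m}_\nu\}$ is upper triangular by Lemma \ref{lem: trans ml}, with diagonal entries $\tkap^{-1} c_{\nu\nu}$; a direct check that the corresponding matrix on the dual side has the reciprocal diagonal then forces the diagonal of the PBW Gram matrix to lie in $1 + q\Z[[q]]$, and the triangularity forces the off-diagonal entries to lie in $q\Z[[q]]$.

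The main obstacle is this last orthogonality step, and specifically the bookkeeping of symmetric Lyndon factors. The factor $\tkap$ contains extra contributions $q^j + q^{-j}$ coming from the symmetric factors $\xi_k$ and the divided-power-like element $P_{\xi_k}^{[n]} = P_{\xi_k}^n/[2n]!!$, and one has to check these cancel exactly against the dual-side contribution at $q = 0$. A clean route is to work in the shuffle realization (Proposition \ref{pro:shufflemodulereal}) and compute the pairing via iterated deletion operators using Lemma \ref{lem:qder2}; the commutativity relations from Lemma \ref{lem: delta and comm}(2) then reduce the calculation to a single Lyndon factor, where the contribution can be evaluated explicitly, separately for the symmetric and non-symmetric cases.
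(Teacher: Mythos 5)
Your construction of the bar-invariant basis by downward induction on the anti-lexicographic order, using Lemma~\ref{lem: pbw conjugate}(2) and the splitting $e_{\nu\mu} = a_{\nu\mu} - \overline{a_{\nu\mu}}$, is exactly the standard Lusztig argument that the paper invokes via~\cite[\S 7.10]{Lusztig-can1}, and this part is correct.

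The gap is in the orthogonality step. You correctly isolate the needed ingredient --- almost orthonormality $(\tpbw{\nu}, \tpbw{\mu}) \in \delta_{\nu\mu} + q\Z[[q]]$ --- and correctly observe that once this is in hand, the triangularity $\tcan{\nu} = \tpbw{\nu} + \sum_{\mu>\nu} c_{\nu\mu}\tpbw{\mu}$ with $c_{\nu\mu} \in q\Z[q]$ transfers it to the canonical basis. But your argument for the almost orthonormality does not close. Lemma~\ref{lem: trans ml} gives the transition matrix from $\{\tpbw{\nu}\}$ to $\{\ttt\mathbf{m}_\nu\}$; to compute $(\tpbw{\nu},\tpbw{\mu})$ via the duality $(\ttt\mathbf{m}_\nu,\ttt\mathbf{m}_\mu^*) = \delta_{\nu\mu}$ of Lemma~\ref{lem:monbas}, you would also need the transition matrix from $\{\tpbw{\mu}\}$ to $\{\ttt\mathbf{m}_\mu^*\}$, which is a priori the first matrix multiplied by the unknown Gram matrix of $\{\ttt\mathbf{m}_\nu\}$. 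Your assertion that ``the corresponding matrix on the dual side has the reciprocal diagonal'' is precisely a restatement of what you are trying to prove about the PBW Gram matrix, so the argument is circular as written --- and you flag this yourself as ``the main obstacle.'' The alternative route you mention (shuffle realization, iterated deletion, reduction to a single Lyndon factor via Lemma~\ref{lem: delta and comm}) is a plausible strategy, and is indeed close in spirit to how Enomoto--Kashiwara establish the analogous almost orthonormality in \cite[Lemma~5.3]{Enomoto-Kashiwara-08} (via properties of the braid operators $T''_{i,1}$, the reduction to rank~$\leq 2$ via commutativity, and the explicit evaluation $([2n]!!)^{-1}(\xi_k^{\circ n}, \xi_k^{\circ n}) \in 1 + q\Z[[q]]$ for the symmetric Lyndon factors), but you do not carry it out. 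Since that computation is the only genuinely nontrivial content of the $q = 0$ orthogonality claim, the proof as proposed is incomplete.
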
 

\begin{proof}
The proof is an application of a standard argument, see, e.g., \cite[\S 7.10]{Lusztig-can1}. 
\end{proof} 

\begin{remark} 
Theorem \ref{thm: can vs pbw} also appears in \cite{Enomoto-Kashiwara-08} as Theorem 5.5. The proof in \emph{loc. cit.} is somewhat different from ours, in particular, it does not involve shuffle modules. 
\end{remark}

Let $\{ \ttt \mathbf{P}_\nu^* \mid \nu \in \tgooda \}$ and $\{ \ttt \mathbf{b}_\nu^* \mid \nu \in \tgooda \}$ be the  bases of $\EKup$ dual (with respect to the bilinear form $( \cdot, \cdot )$) to the PBW and the canonical bases of $\EKlow$, respectively. 

\begin{corollary} \label{cor: max of dual can} 
We have 
\eq 
\tdcan{\nu} = \tdpbw{\nu} + \sum_{\mu < \nu} (\tdcan{\nu},\tpbw{\mu}) \tdpbw{\mu}. 
\eneq 
Hence $\max(\tdcan{\nu}) = \nu$ and the coefficient of $\nu$ in $\tdcan{\nu}$ is $\ttt\kappa_\nu$. In particular, if $\nu \in \tlyn$ or $\nu$ is symmetric, then $\tdcan{\nu} = \tdpbw{\nu}$. 
\end{corollary}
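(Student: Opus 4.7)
The plan is to deduce the corollary from Theorem \ref{thm: can vs pbw} by a direct duality argument, supplemented by the triangularity information for the various bases established in Lemmas \ref{lem: trans ml} and \ref{lem: pbw conjugate}.

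First I would establish the displayed expansion. By Theorem \ref{thm: can vs pbw}, the transition matrix $C = (c_{\nu\mu})$ expressing $\tcan{\mu}$ in the PBW basis $\{\tpbw{\rho}\}$ is upper unitriangular with respect to the anti-lexicographic order. Its inverse $C^{-1}$ is therefore upper unitriangular as well, and consequently the dual bases are related by the inverse-transpose $(C^{-1})^T$, which is lower unitriangular with $1$'s on the diagonal. This already yields that $\tdcan{\nu}$ is a linear combination of $\{\tdpbw{\mu}\}_{\mu \leq \nu}$ with coefficient $1$ at $\tdpbw{\nu}$. Pairing both sides with $\tpbw{\mu}$ and using $(\tdpbw{\rho}, \tpbw{\mu}) = \delta_{\rho\mu}$ then identifies the coefficient of $\tdpbw{\mu}$ as $(\tdcan{\nu}, \tpbw{\mu})$, proving the displayed formula.

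Next, to prove $\max(\tdcan{\nu}) = \nu$ with coefficient $\ttt\kappa_\nu$, it suffices, by the expansion just established, to show the corresponding statement for $\tdpbw{\nu}$: $\max(\tdpbw{\nu}) = \nu$ with coefficient $\ttt\kappa_\nu$ (indeed, the lower $\tdpbw{\mu}$ with $\mu < \nu$ can only contribute words $\leq \mu < \nu$). For this, I would combine the identity $\ttt\mathbf{l}_\nu = \ttt\kappa_\nu \tpbw{\nu}$ from Lemma \ref{lem: pbw conjugate}(1) with the upper triangular transition from the Lyndon to the monomial basis in Lemma \ref{lem: trans ml}. Dualizing this triangular relation expresses $\tdpbw{\nu}$ as a scalar multiple of $\ttt\mathbf{m}_\nu^*$ plus a combination of $\ttt\mathbf{m}_\mu^*$ with $\mu < \nu$; by Lemma \ref{lem:monbas}, together with the fact that the maximum word appearing in any element of $\EKmb$ must be $\theta$-good, one obtains $\max(\ttt\mathbf{m}_\mu^*) = \mu$ with coefficient $1$. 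Matching the leading scalar to $\ttt\kappa_\nu$ then yields the claim.

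Finally, the ``in particular'' statement follows from Lemma \ref{lem: smallest words}: if $\nu \in \tlyn$ or $\nu$ is symmetric, then $\nu$ is the smallest word of its isotropic weight, so no admissible index $\mu < \nu$ of the correct weight exists in the expansion from the first step. Since the bilinear form respects the weight grading, all off-diagonal terms vanish and $\tdcan{\nu} = \tdpbw{\nu}$. The main obstacle will be the precise identification of the leading coefficient as $\ttt\kappa_\nu$: the scalars appearing in the dualization of Lemma \ref{lem: trans ml} must be carefully combined with the definition of $\ttt\kappa_\nu$, which involves different factorials ($[n_i]!$ versus $[n_i]!!$) for non-symmetric versus symmetric Lyndon factors. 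This bookkeeping mirrors the analogous calculation for $\mathbf{U}$ in \cite[Theorem 38]{Leclerc-04}.
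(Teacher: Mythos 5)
The argument for the displayed expansion is correct: by Theorem \ref{thm: can vs pbw} the transition from the canonical to the PBW basis is upper unitriangular, and the dual bases are related by the inverse-transpose, which is lower unitriangular; pairing against $\tpbw{\mu}$ identifies the coefficients, exactly as you say. The argument for the ``in particular'' statement via Lemma \ref{lem: smallest words} is also correct: for $\nu \in \tlyn$ or $\nu$ symmetric there are no $\theta$-good $\mu$ of the same weight with $\mu < \nu$, so the sum is empty.

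The middle step is where you have a gap. You reduce the claim to proving $\max(\tdpbw{\nu}) = \nu$ with coefficient $\ttt\kappa_\nu$, and you propose to do this by dualizing the triangular relation of Lemma \ref{lem: trans ml} and ``matching the leading scalar''. But the diagonal entry $c_{\nu\nu}$ of that transition matrix is a nontrivial sign times a power of $q$, and dualizing $\tpbw{\nu} = (c_{\nu\nu}/\ttt\kappa_\nu)\ttt\mathbf{m}_\nu + (\text{higher})$ gives $\tdpbw{\nu} = (\ttt\kappa_\nu/c_{\nu\nu})\,\ttt\mathbf{m}_\nu^* + (\text{lower})$, so the leading scalar on $\ttt\mathbf{m}_\nu^*$ is $\ttt\kappa_\nu/c_{\nu\nu}$, not $\ttt\kappa_\nu$. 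Getting from here to the word-coefficient $\ttt\kappa_\nu$ requires a nontrivial further step that you haven't indicated: you must analyze the coefficient of the word $\nu$ in $\ttt\mathbf{m}_\nu^*$ itself, and this is not simply $1$ once the interplay between coefficient extraction, the bilinear form, and the reversal $\sigma$ built into $\ttt\Psi$ is taken into account. Your one-line remark that ``one obtains $\max(\ttt\mathbf{m}_\mu^*) = \mu$ with coefficient $1$'' is exactly where the subtlety lives, and it is not established.

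The route the paper intends (following \cite[Proposition~40]{Leclerc-04}, which the paper's proof cites, not Theorem~38 as you do) sidesteps the monomial basis entirely: one shows $\max(\tdpbw{\nu}) = \nu$ with coefficient $\ttt\kappa_\nu$ directly by expressing $\tdpbw{\nu}$ through iterated $\theta$-shuffles of $\theta$-good Lyndon words and counting the shuffles that produce the word $\nu$, with the factor $q^j + q^{-j}$ contributing for each symmetric Lyndon factor of multiplicity. This is precisely the combinatorial computation carried out in the last paragraph of the proof of Lemma \ref{lem: standard char} for the $\theta$-standard vectors $\ttt\Delta_\nu$, and it is what produces $\ttt\kappa_\nu$. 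Your proof should either import that shuffle-counting argument or supply the missing verification that the $c_{\nu\nu}$ factor from Lemma \ref{lem: trans ml} is exactly absorbed by the leading coefficient of $\ttt\mathbf{m}_\nu^*$.
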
 

\begin{proof}
The proof is analogous to \cite[Proposition 40]{Leclerc-04}. The last statement follows from Lemma \ref{lem: smallest words}. 
\end{proof}

\subsection{Standard and costandard basis} 

Given $\nu = (\plyn{l})^{n_l} \cdots (\plyn{1})^{n_1} \in \tgooda$, let 
\[
\Delta_\nu = q^{-s(\nu)}(\plyn{l})^{\circ n_l} \circ \cdots \circ (\plyn{1})^{\circ n_1}, \quad 
{}^\theta \Delta_\nu = q^{-\ttt s(\nu)} \varnothing \acts \Delta_\nu, 
\] 
where 
\eq \label{eq: s ts}
s(\nu) = \sum_{i=1}^l n_i(n_i - 1)/2, \quad \ttt s(\nu) = \sum_{\substack{i=1\\ \plyn{i} \text{symm.}}}^l n_i. 
\eneq

\begin{lemma} \label{lem: standard char}
If $\nu \in \tgooda$ then: $\Delta_\nu = \Delta_{\nu^\theta} \circ \Delta_{\nu_\theta}$,  $\max({}^\theta \Delta_\nu) = \nu$ and the coefficient of the word $\nu$ in ${}^\theta \Delta_\nu$ equals $\tkap$. 
\end{lemma}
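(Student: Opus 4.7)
The plan is to prove the three parts in order, relying on the standard ordering of $J$ (Assumption \ref{assumption}), the explicit form $\alpha_n\alpha_{n-2}\cdots\alpha_m$ of good Lyndon words, and the resulting anti-lexicographic order on $\wor$.

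For part (1), the idea is to commute symmetric Lyndon factors past non-symmetric ones within $\Delta_\nu$ to rearrange it into $\Delta_{\nu^\theta} \circ \Delta_{\nu_\theta}$. This uses Lemma \ref{lem: delta and comm}(2), which yields $\xi \circ \mu = \mu \circ \xi$ whenever $\xi \subset \mu$ strictly. The crucial combinatorial verification is that whenever such a commutation is required -- that is, when a symmetric $\xi$ with $\ttt\norm{\xi} = \beta_{-(2d-1),2d-1}$ must be pulled past a non-symmetric $\mu$ with $\norm{\mu} = \beta_{m,n}$ because $\xi > \mu$ in anti-lex -- the required containment $\xi \subset \mu$ holds. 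Indeed, the inequality $\xi > \mu$ forces $n > 2d-1$ (first-letter comparison under Assumption \ref{assumption}), and the non-symmetric $\theta$-Lyndon condition $n + m < 0$ then forces $m < -(2d-1)$. The $q$-prefactors match by the additivity $s(\nu) = s(\nu^\theta) + s(\nu_\theta)$, immediate from the definitions.

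For parts (2) and (3), using part (1) together with the associativity of the right $\FF$-action on $\tFF$, one rewrites
\[ {}^\theta \Delta_\nu = q^{-\ttt s(\nu)} \bigl(\varnothing \acts \Delta_{\nu^\theta}\bigr) \acts \Delta_{\nu_\theta} \]
and iterates Lemma \ref{lem: tgood Lyndon props}(2). Expanding $\Delta_\nu$ as $q^{-s(\nu)}(\plyn{l})^{\circ n_l}\circ\cdots\circ(\plyn{1})^{\circ n_1}$ and acting on $\varnothing$ by one Lyndon factor at a time, in increasing order, I would verify at every step that the next factor $\plyn{i}$ dominates the running maximum word (a concatenation of previously processed factors) in anti-lex. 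This reduces to a first-letter comparison, using $\plyn{i} > \plyn{l}$ and the characteristic decreasing-by-two pattern of good Lyndon words to break ties. Iteration gives $\max({}^\theta\Delta_\nu) = \nu$, proving part (2). For part (3), the coefficient of $\nu$ is tracked along the same iteration: each shuffle power $(\plyn{i})^{\circ n_i}$ contributes $[n_i]!$ to the leading coefficient (the standard fact about shuffles of identical Lyndon words, cf.\ Lemma \ref{lem: pbw conjugate}(1)), and each symmetric factor $\xi_d$ additionally contributes $\prod_{j=1}^{n_i}(q^j + q^{-j})$ coming from pairs of shortest $\weyl_n/\sym_n$-coset representatives in the $\varnothing \acts$ action that preserve a power of $\xi_d$ (by the $\theta$-invariance of symmetric factors) with distinct $q$-weights. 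Multiplying these contributions produces $\tkap$.

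The main obstacle I anticipate is the $q$-power bookkeeping in part (3), specifically identifying the coset representatives that stabilize a given symmetric Lyndon power and accounting for their distinct $q$-weights. The combinatorial containment check in part (1) is the key new input but is straightforward once the first-letter argument is set up, and the iteration in part (2) is conceptually routine modulo checking that repeated identical factors behave correctly under Lemma \ref{lem: tgood Lyndon props}(2).
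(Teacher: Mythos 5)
Your proposal takes essentially the same route as the paper. In part (1) you commute symmetric Lyndon factors past non-symmetric ones using Lemma~\ref{lem: delta and comm}(2) and a containment $\xi_d \subset \mu$; in parts (2)--(3) you peel off Lyndon factors one at a time via associativity of $\acts$, applying Lemma~\ref{lem:maxofshuffle} and Lemma~\ref{lem: tgood Lyndon props}(2) to control the highest word, and count the $\theta$-shuffles fixing $\nu$ to obtain the coefficient $\tkap$. The paper does exactly this (peeling one copy of the largest factor per inductive step rather than whole blocks, a cosmetic difference), and in part (1) it cites Lemma~\ref{lem: sym aux} rather than verifying the containment by hand.

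The hand-verification in part (1), however, has the conventions reversed. With the paper's anti-lexicographic order --- which compares from the \emph{right}, as is forced by the Lyndon definition ``smaller than all proper left factors'' and is what makes Lemma~\ref{lem: sym aux} and the description of good Lyndon words consistent --- a good Lyndon word of weight $\beta_{m,n}$ is $\theta$-Lyndon iff $m+n\geq 0$; so non-symmetric $\theta$-Lyndon means $m+n>0$, not $m+n<0$. Correspondingly, the first-difference comparison for $\xi_d>\mu$ yields $m<-(2d-1)$, and it is then $m+n>0$ that gives $n>2d-1$; you have attributed these two conclusions to the opposite hypotheses. Concretely, $\mu=\alpha_1\alpha_{-1}\alpha_{-3}$ satisfies $\xi_1>\mu$ and $m+n=-2<0$, yet $n=1\not>2\cdot1-1$, $\xi_1\not\subset\mu$, and in fact $\mu<\ttt w(\mu)$, so $\mu$ is not $\theta$-Lyndon at all. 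The two errors cancel for genuine non-symmetric $\theta$-Lyndon $\mu$, so the containment $\xi_d\subset\mu$ does hold and your overall argument survives (it is precisely the content of Lemma~\ref{lem: sym aux}), but the justification as written is internally inconsistent; citing Lemma~\ref{lem: sym aux} as the paper does is the safer route.
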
 

\begin{proof} 
We prove the first statement by induction one the number $k$ of Lyndon factors in the Lyndon factorization of $\nu^\theta$. If $k=0$ the claim is obvious. Next, suppose that there are $k+1$ Lyndon factors in $\nu^\theta$ and let $\xi_m$ be the smallest. If $\xi_m$ is also the smallest word in the standard factorization of $\nu$ then, by induction, we are done. Otherwise, let $\mu$ be a Lyndon factor of $\nu$ with $\mu < \xi_m$. Since $\mu \in \tlyn$, Lemma \ref{lem: sym aux} implies that $\xi_{m} \subset \mu$. By Lemma \ref{lem: delta and comm}, we conclude that $\mu \circ \xi_m = \xi_m \circ \mu$. It now follows by induction that $\Delta_\nu = \Delta_{\nu^\theta} \circ \Delta_{\nu_\theta}$. 

We now prove the last two statements by induction on the number $k$ of Lyndon factors in $\nu$. The base case $k=0$ is trivial. Let $\nu' = \plyn{k} \cdots \plyn{2}$. Lemma \ref{lem:tgood-good} implies that $\nu' \in \tgooda$. Hence, by induction, $\max(\ttt\Delta_{\nu'}) = \nu'$. Since $\lambda = 0$, we have $\plyn{1} \in \tlyn$, and so $\plyn{1} \geq \ttt w(\plyn{1})$. It follows from Lemma \ref{lem:maxofshuffle} and part (2) of Lemma \ref{lem: tgood Lyndon props} that $\max(\ttt\Delta_{\nu}) = \max(\nu' \acts \plyn{1}) = \nu$. By induction, we may also assume that $\dim_q ({}^\theta \Delta_{\nu'})_{\nu'} = \ttt\kappa_{\nu'}$. Let us call the result of applying $w \in \tcoset{\Norm{\nu'}_\theta}{\Norm{\plyn{1}}}$ to $\nu$ a $\theta$-shuffle. It is easy to see that the $\theta$-shuffles equal to $\nu$ are precisely those arising from one of the $n_1$ (resp.\ $2n_1$) standard insertions of $\plyn{1}$ between words equal to $\plyn{1}$ in $\nu'$ if $\plyn{1}$ is not symmetric (resp.\ is symmetric). We conclude that $\dim_q ({}^\theta \Delta_\nu)_\nu = \tkap$ from the fact that the transposition of two words equal to $\plyn{1}$ appears in the shuffle action with the coefficient $q^{-2}$. 
\end{proof} 

Given $\nu \in \tgooda$ with $\nu = \plyn{k} \cdots \plyn{1}$, let 
\[
\tnabla_\nu = q^{-\ttt s(\nu)-t(\nu)}\varnothing \acts (\ttt w(\plyn{k}) \circ \cdots \circ \ttt w(\plyn{1})), 
\]
where $t(\nu)$ is the degree of an element $\tau_w$, with $w$ the longest minimal length coset representative with respect to the parabolic subgroup of $\weyl_n$ defined by the decomposition of $\nu$ into Lyndon words (cf.\ \cite[\S 2.3]{Lauda-Vazirani}). 

Recall that we have fixed the standard order $\leq$ on $J$ and equipped $\wor$ with the anti-lexicographic order $\leq$. Let $\leq'$ denote both the opposite order on $J$ and the induced lexicographic order on $\wor$. 
Given a linear combination $u$ of words, let $\max'(u)$ be the largest word appearing in $u$ with respect to $\leq'$.  

\begin{lemma} \label{lem: costandard char} 
We have: $\max'(\tnabla_\nu) = \nu$ and  the coefficient of the word $\nu$ in $\tnabla_\nu$ equals~$\tkap$. 
\end{lemma}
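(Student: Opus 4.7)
The argument closely parallels the proof of Lemma \ref{lem: standard char}, working throughout with the opposite order $\leq'$ in place of $\leq$, $\max'$ in place of $\max$, and the $\ttt w$-reversed Lyndon factors $\ttt w(\plyn{i})$ in place of $\plyn{i}$. The plan is to proceed by induction on the number $k$ of Lyndon factors of $\nu$, with the base case $k=0$ being trivial.

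For the inductive step, set $\nu = \plyn{k}\cdots\plyn{1}$ and $\nu' = \plyn{k}\cdots\plyn{2}$. First I would use the right $\FF$-module structure of $\tFFz$, namely $\varnothing \acts (A \circ B) = (\varnothing \acts A) \acts B$, to rewrite
\[
\tnabla_\nu = q^{\alpha} \, \tnabla_{\nu'} \acts \ttt w(\plyn{1})
\]
for an explicit $\alpha \in \Z$ recording the differences $\ttt s(\nu) - \ttt s(\nu')$ and $t(\nu) - t(\nu')$. I would then establish a dual analogue of Lemma \ref{lem: tgood Lyndon props}(2): for $\mu \in \twor$ and $\eta \in \tlyn$ with $\eta \geq \mu$, $\max'(\mu \acts \ttt w(\eta)) = \mu\eta$. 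The key input is that under Assumption \ref{assumption}, $\theta$ reverses the standard order on $J$, so $\ttt w \colon \wor \to \wor$ reverses the order $\leq$; the argument then mirrors that of Lemma \ref{lem: tgood Lyndon props}(2), with $\max$ replaced by $\max'$, using a dual of Lemma \ref{lem:maxofshuffle} obtained by swapping $\leq$ with $\leq'$ throughout. Applying this dual lemma with $\mu = \nu'$ and $\eta = \plyn{1}$ (and using $\plyn{1} \geq \plyn{i}$ for all $i \geq 2$, hence $\plyn{1} \geq \nu'$), and combining with the inductive hypothesis $\max'(\tnabla_{\nu'}) = \nu'$, yields $\max'(\tnabla_\nu) = \nu'\plyn{1} = \nu$.

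For the coefficient, the $\theta$-shuffles in $\tcoset{\Norm{\nu'}_\theta}{\Norm{\plyn{1}}}$ contributing to the word $\nu$ correspond, as in the proof of Lemma \ref{lem: standard char}, to standard insertions of $\plyn{1}$ among the blocks of $\nu'$ equal to $\plyn{1}$ (giving $n_1$ insertions when $\plyn{1}$ is non-symmetric and $2n_1$ when symmetric), composed with the ``sign-flip'' portion of the $\varnothing \acts$ action that converts $\ttt w(\plyn{1})$ back to $\plyn{1}$. The cumulative $q$-powers from \eqref{sh action} combined with the normalization $q^{-\ttt s(\nu) - t(\nu)}$ yield exactly $\tkap$; the extra factor $q^{-t(\nu)}$, absent in $\ttt\Delta_\nu$, is precisely the compensation for the reversal of the ordering of Lyndon factors and the $\theta$-twist inherent in passing from $\plyn{i}$ to $\ttt w(\plyn{i})$.

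The main obstacle will be establishing the dual of Lemma \ref{lem: tgood Lyndon props}(2) and then carrying out the bookkeeping of $q$-powers needed to produce $\tkap$. The first part hinges on the order-reversing property of $\ttt w$ under Assumption \ref{assumption}; the second is a direct but somewhat tedious comparison with the $q$-power bookkeeping in the proof of Lemma \ref{lem: standard char}, where the role of $q^{-t(\nu)}$ is to absorb exactly the $q$-contribution generated by the reversal of the ordering of Lyndon factors in going from $\ttt\Delta_\nu$ to $\tnabla_\nu$.
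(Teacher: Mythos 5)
Your argument takes essentially the same approach as the paper, which simply states that the proof is an easy modification of the last paragraph of the proof of Lemma~\ref{lem: standard char}: replace $\max$ by $\max'$, the Lyndon factors $\plyn{i}$ by $\ttt w(\plyn{i})$, and carry out the analogous $\theta$-shuffle count. Your outline correctly identifies the needed $\leq'$-analogues of Lemma~\ref{lem:maxofshuffle} and Lemma~\ref{lem: tgood Lyndon props}(2); the only slight imprecision is that the phrase ``$\ttt w$ reverses the order $\leq$'' should really say that $\ttt w$ is an order-reversing bijection from $(\wor,\leq)$ to $(\wor,\leq')$ (it does not reverse $\leq$ into itself), which is the consequence of Assumption~\ref{assumption} that makes the $\max'$-versions of those lemmas go through.
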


\begin{proof}
The proof is an easy modification of the last paragraph of the proof of Lemma \ref{lem: standard char}. 
\end{proof}

\section{Finite-dimensional representation theory of orientifold KLR algebras} \label{sec: rep th}

We again let $\lambda$ be arbitrary until \S \ref{sec:oklr irr gd}, where we make the restriction $\lambda = 0$. 

If $A$ is a graded algebra, let $\gmodv{A}$ be the category of all graded left $A$-modules, with degree-preserving module homomorphisms as morphisms. If $M, N$ are graded $A$-modules, let $\hhom_A(M,N)_n$ denote the space of all homomorphisms homogeneous of degree $n$, and $\Hom_A(M,N) = \bigoplus_{n \in \Z} \hhom_A(M,N)_n$. Let $M\{n\}$ denote the module obtained from $M$ by shifting the grading by $n$. Let $\pmodv{A}$ denote the full subcategory of finitely generated graded projective modules, and $\fmodv{A}$ the full subcategory of graded finite dimensional modules. Given any of these abelian categories $\mathcal{C}$, we denote its Grothendieck group by $[ \mathcal{C} ]$. 

We consider (orientifold) KLR algebras associated to the $\mathtt{A}_\infty$ quiver $\Gamma = (J,\Omega)$, with $J$ as in \S \ref{subsec: notation} and $\Omega$ the standard linear orientation, as well as the involution $\theta$ from \S \ref{subsec: notation}. 
Let $\mathds{1}$ and ${}^\theta\mathds{1}$ denote the regular representations (in degree zero) of the trivial algebras $\klrv{0}$ and $\oklrv{0}$, respectively. 
For a fixed $\lambda \in \N[J]$, set 
\[
\gmodv{\mathcal{R}} = \soplus_{\beta \in \N[J]} \gmodv{\klr}, \quad \gmodv{{}^\theta \mathcal{R}(\lambda)} = \soplus_{\beta \in \N[J]^{\theta}} \gmodv{\oklr}. 
\] 
We use analogous notation for direct sums of categories of finite dimensional and finitely generated projective modules.

\subsection{Reminder on categorification via KLR algebras} 

Basic information about the representation theory of KLR algebras, including the definitions of the Khovanov--Lauda pairing $(\cdot , \cdot) \colon \pmodv{\klr} \times \fmodv{\klr} \to \mathcal{A}$ and the dualities $P \mapsto P^\sharp$ on $\pmodv{\mathcal{R}}$ and $M \mapsto M^\flat$ on $\fmodv{\mathcal{R}}$, can be found in, e.g., \cite{Khovanov-Lauda-1}, \cite[\S 3]{Kleshchev-Ram-11} or \cite[\S 7]{VV-HecB}. Since these definitions and the notations are standard, we will not explicitly recall them. If $M \in \gmodv{\klr}$ and $\nu \in \word$, we call $M_\nu = e(\nu) M$ the $\nu$-weight space of $M$.

Let us recall the definition of the convolution product of modules over KLR algebras. 
Let $\beta, \beta' \in \N[J]$ with $\Norm{\beta} = n$ and $\Norm{\beta'} = n'$. Set 
\[ e_{\beta,\beta'} = \sum_{\stackrel{\nu \in J^{\beta+\beta'}}{ \nu_1 \cdots \nu_n \in J^\beta}} e(\nu) \in \klrv{\beta+\beta'}.\]
There is a non-unital algebra homomorphism 
\eq \label{eq: induction inc klr}
\iota_{\beta,\beta'} \colon \klrvv{\beta}{\beta'} := \klrv{\beta} \otimes \klrv{\beta'} \to \klrv{\beta + \beta'} 
\eneq
given by $e(\nu) \otimes e(\mu) \mapsto e(\nu\mu)$ for $\nu \in J^\beta$, $\mu \in J^{\beta'}$ and 
\begin{alignat}{3} \label{eq: induc1}
	x_l \otimes 1 \mapsto& \ x_{l} e_{\beta,\beta'}, \quad& 1 \otimes x_{l'} \mapsto& \ x_{m+l'}e_{\beta,\beta'} &\qquad& (1 \leq l^{(')} \leq n^{(')}), \\ \label{eq: induc2}
	\tau_k \otimes 1 \mapsto& \ \tau_{k}e_{\beta,\beta'}, \quad& 1 \otimes \tau_{k'} \mapsto& \ \tau_{m+k'}e_{\beta,\beta'} &\qquad& (1 \leq k^{(')} < n^{(')}). 
\end{alignat} 
Let $M$ be a graded $\klrv{\beta}$-module and $N$ be a graded $\klrv{\beta'}$-module. Their \emph{convolution product} is defined as 
\[
M \circ N = \klrv{\beta+\beta'}e_{\beta,\beta'}\otimes_{\klrvv{\beta}{\beta'}}(M \otimes N). 
\] 
It descends to a product on $ [\pmodv{\mathcal{R}}]$ and $ [\fmodv{\mathcal{R}}]$. 

The embedding \eqref{eq: induction inc klr} generalizes to an embedding 
\eq \label{eq: long ind inc klr}
\iota_{\underline{\beta}} \colon \klrv{\underline{\beta}} := \klrv{\beta_1} \otimes \cdots \otimes \klrv{\beta_m} \to \klrv{\norm{\underline{\beta}}}
\eneq
for any $\underline{\beta} \in (\N[J])^m$. The embedding \eqref{eq: long ind inc klr} gives rise to a triple of adjoint functors $(\Ind_{\underline{\beta}},\Res_{\underline{\beta}}, \Coind_{\underline{\beta}})$ between categories of graded modules. 

As explained in \cite[\S 2.2]{Khovanov-Lauda-1} and \cite[\S 3.6]{Kleshchev-Ram-11}, convolution with the class of (an appropriate graded shift of) the polynomial representation $P(i^{(n)})$ of the nil-Hecke algebra $\mathcal{R}(ni)$ yields an $\Ql$-module homomorphism 
\[
\theta_i^{(n)} = - \circ [P(i^{(n)})] \colon [\pmodv{\klr}] \to [\pmodv{\klrv{\beta + ni}}]. 
\]

Let us recall the fundamental categorification theorem from \cite[\S 3]{Khovanov-Lauda-1} (see also \cite[Theorem 4.4]{Kleshchev-Ram-11}. 

\begin{theorem}[Khovanov-Lauda]
There exists a unique pair of adjoint (with respect to Lusztig's form on~$\ff$ and the Khovanov--Lauda pairing) $\mathsf{Q}$-graded $\Ql$-linear isomorphisms 
\[ 
\gamma \colon \fint  \xrightarrow{\sim} [\pmodv{\mathcal{R}}], \quad 
\gamma^* \colon [\fmodv{ \mathcal{R}}] \xrightarrow{\sim}  \fintd
\]
such that $\gamma(1) = [\triv]$ and $\gamma(x f_i^{(n)}) = \theta_i^{(n)}(\gamma(x))$ for all $x \in \fint$. These isomorphisms intertwine: (i) multiplication in $\ff$ with the convolution product, (ii) comultiplication in $\ff$ with restriction functors, and (iii) the bar involution on $\ff$ with the involutions $-^\sharp$ and $-^\flat$. 
\end{theorem}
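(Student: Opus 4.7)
The plan is to follow the Khovanov--Lauda strategy and construct $\gamma$ generator by generator. First I would define a collection of projective modules $P(\nu) = \klrv{\beta} e(\nu)$ for $\nu \in J^\beta$ and, for the divided power version, $P(i^{(n)})$ as the indecomposable projective module over the nil-Hecke algebra $\klrv{ni}$ (explicitly, $P(i^{(n)}) = \klrv{ni} e(i^n) \tau_{w_0}$, up to grading shift). The operator $\theta_i^{(n)}$ is convolution with $[P(i^{(n)})]$, so I would \emph{define} $\gamma$ inductively by $\gamma(1) = [\triv]$ and $\gamma(u f_i^{(n)}) = \theta_i^{(n)}(\gamma(u))$ for $u \in \fint$.

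The first nontrivial step is well-definedness: one must check that the $\theta_i^{(n)}$ satisfy the $q$-Serre relations on the categorified side. This reduces to computing the convolution products $P(i^{(k)}) \circ P(j) \circ P(i^{(l)})$ and showing that the alternating sum in the $q$-Serre identity yields a zero class in $[\pmodv{\mathcal{R}}]$. The computation is the standard Mackey-type analysis: restrict along the embedding \eqref{eq: long ind inc klr}, filter by ordered subsets of $\{1,\dots,k+l+1\}$, and identify the associated graded pieces using the polynomial representation. This is the technical heart of the argument and the main obstacle; however, it is carried out in detail in \cite[\S 3]{Khovanov-Lauda-1} and I would follow that argument verbatim, using the Brundan--Kleshchev--Rouquier-type presentation of the nil-Hecke algebra to evaluate the relevant idempotents.

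Once well-definedness is established, surjectivity of $\gamma$ is immediate from the fact that every indecomposable projective $\klrv{\beta}$-module is a summand of some iterated convolution product $P(i_1) \circ \cdots \circ P(i_n)$, which lies in the image by construction. For injectivity I would use the compatibility with the bilinear forms: the Khovanov--Lauda pairing on $[\pmodv{\mathcal{R}}] \times [\fmodv{\mathcal{R}}]$ is non-degenerate (by basic representation theory of finite-dimensional graded algebras), and the pairing on $\ff$ introduced earlier is also non-degenerate; a direct check on generators (using that $e'_i$ is adjoint to left multiplication by $f_i$ and $\Res$ is adjoint to $\Ind$) shows that $\gamma$ is a partial isometry, hence injective.

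Finally, I would define $\gamma^*$ as the composition of the dual of $\gamma$ with the identification $[\fmodv{\mathcal{R}}]^* \cong [\pmodv{\mathcal{R}}]$ coming from the Khovanov--Lauda pairing, automatically making $(\gamma, \gamma^*)$ adjoint. Intertwining property (i) is built into the definition of $\gamma$ together with the associativity of convolution; property (ii) then follows from (i) by adjunction, since comultiplication on $\ff$ is by definition adjoint to multiplication under $(\cdot,\cdot)$, and $\Res_{\underline{\beta}}$ is adjoint to $\Ind_{\underline{\beta}}$. For property (iii), the bar involution on $\ff$ is characterized by $\overline{f_i} = f_i$ and $\overline{q} = q^{-1}$; on the categorified side the grading-reversing dualities $-^\sharp$ and $-^\flat$ fix $[P(i)]$ and $[\triv]$ respectively and invert the grading shift, so compatibility follows by induction on the length of a divided power monomial. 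Uniqueness is clear, since $\gamma(1)$ and the formula $\gamma(u f_i^{(n)}) = \theta_i^{(n)}(\gamma(u))$ determine $\gamma$ on a generating set of $\fint$.
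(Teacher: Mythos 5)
The paper does not prove this theorem: it is recalled as a known result and attributed directly to \cite[\S 3]{Khovanov-Lauda-1} and \cite[Theorem 4.4]{Kleshchev-Ram-11}, with no argument supplied. Your sketch correctly reproduces the structure of the original Khovanov--Lauda proof that those references contain (inductive definition of $\gamma$, well-definedness via categorified $q$-Serre relations by a Mackey-type filtration argument, surjectivity from indecomposable projectives being summands of iterated convolutions, injectivity from non-degeneracy of the pairing, $\gamma^*$ by duality, and the three intertwining properties), so it is consistent with the route the paper implicitly endorses. Two small remarks: the phrase ``Brundan--Kleshchev--Rouquier-type presentation of the nil-Hecke algebra'' is misplaced --- the computation of $P(i^{(n)})$ and the idempotent $e(i^n)\tau_{w_0}$ is purely a nil-Hecke calculation that predates BKR, and invoking that isomorphism adds nothing here; and for property (iii) it would be worth stating explicitly that one must verify $P(i^{(n)})^\sharp \cong P(i^{(n)})$ up to the appropriate grading shift, which is the actual content of the inductive step, rather than just asserting $[P(i)]$ is fixed.
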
 

\subsection{Categorification via orientifold KLR algebras} 

We recall some fundamental definitions and results concerning orientifold KLR algebras from \cite[\S 8]{VV-HecB}. We refer the reader to \emph{loc.\ cit.} for a detailed exposition.

Let $\beta \in \N[J]^\theta$ and $\beta' \in \N[J]$ with $\Norm{\beta}_\theta = n$ and $\Norm{\beta'} = n'$. Set 
\[ {}^\theta e_{\beta,\beta'} = \sum_{\substack{\nu \in {}^\theta J^{\beta+{}^\theta\beta'}, \ \nu_1\hdots\nu_n \in {}^\theta J^\beta \\ \nu_{n+1}\hdots\nu_{n+n'} \in J^{\beta'}}} e(\nu) \in \oklrv{\beta+{}^\theta\beta'}.\] 
There is an injective non-unital algebra homomorphism 
\eq \label{eq: induction inc}
\ttt\iota_{\beta,\beta'} \colon \oklrvv{\beta}{\beta'} := \oklr \otimes \klrv{\beta'} \to \oklrv{\beta+{}^\theta\beta'}
\eneq
given by formulae \eqref{eq: induc1}-\eqref{eq: induc2} (with $\nu \in \comp$ and $e_{\beta,\beta'}$ replaced by ${}^\theta e_{\beta,\beta'}$) and $\tau_0 \otimes 1 \mapsto \tau_0 {}^\theta e_{\beta,\beta'}$. 
The \emph{convolution action} of $N \in \gmodv{\klrv{\beta'}}$ on $M \in \gmodv{\oklr}$ is defined as 
\[
M \acts N =  \oklrv{\beta+{}^\theta\beta'}{}^\theta e(\beta,\beta')\otimes_{\oklrvv{\beta}{\beta'}}(M \otimes N). 
\] 

\begin{prop}
The category $\gmodv{\mathcal{R}}$ is monoidal with product $\circ$ and unit $\mathds{1}$. Moreover, there is a right monoidal action (see, e.g., \cite{Davydov}) of $\gmodv{\mathcal{R}}$ on $\gmodv{{}^\theta \mathcal{R}(\lambda)}$ via $\acts$. 
\end{prop}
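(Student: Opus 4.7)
The plan is to reduce both claims to a single structural fact: the algebra embeddings $\iota_{\underline{\beta}}$ and their orientifold analogues are \emph{transitive}, in the sense that nested products factor compatibly through longer products. Once this is established, the associator and unit isomorphisms become instances of the standard identification of iterated tensor products over intermediate algebras with a single tensor product over the total tensored algebra.

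First I would treat the monoidal structure on $\gmodv{\mathcal{R}}$. For the unit, note that $\klrv{0} = \cor$ and $e_{0,\beta}$ equals the unit $1 \in \klrv{\beta}$, so the convolution $\triv \circ M = \klrv{\beta} \otimes_{\cor \otimes \klrv{\beta}} (\cor \otimes M)$ canonically collapses to $M$, and similarly on the right. For the associator, given $L \in \gmodv{\klrv{\beta_1}}$, $M \in \gmodv{\klrv{\beta_2}}$ and $N \in \gmodv{\klrv{\beta_3}}$, I would observe that the two composite embeddings
\[
\klrv{\beta_1}\otimes \klrv{\beta_2} \otimes \klrv{\beta_3} \longrightarrow \klrv{\beta_1+\beta_2}\otimes \klrv{\beta_3} \longrightarrow \klrv{\beta_1+\beta_2+\beta_3}
\]
and
\[
\klrv{\beta_1}\otimes \klrv{\beta_2} \otimes \klrv{\beta_3} \longrightarrow \klrv{\beta_1}\otimes \klrv{\beta_2+\beta_3} \longrightarrow \klrv{\beta_1+\beta_2+\beta_3}
\]
both coincide with $\iota_{\underline{\beta}}$ from \eqref{eq: long ind inc klr}, by inspection on generators using \eqref{eq: induc1}--\eqref{eq: induc2}. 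Both $(L \circ M) \circ N$ and $L \circ (M \circ N)$ can therefore be canonically identified with $\klrv{\beta_1+\beta_2+\beta_3} e_{\underline{\beta}} \otimes_{\klrv{\underline{\beta}}} (L \otimes M \otimes N)$, where $e_{\underline{\beta}} = \sum e(\nu)$ is the idempotent selecting sequences whose block decomposition has types $(\beta_1,\beta_2,\beta_3)$. Naturality in $L,M,N$ is immediate, and the pentagon and triangle axioms follow by the same method applied to four factors.

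The right action of $(\gmodv{\mathcal{R}},\circ)$ on $\gmodv{{}^\theta\mathcal{R}(\lambda)}$ is handled in the same spirit. The mixed embedding ${}^\theta\iota_{\beta,\beta'}$ from \eqref{eq: induction inc} extends to an embedding
\[
{}^\theta\iota_{\beta;\beta_1,\ldots,\beta_m} \colon \oklr \otimes \klrv{\beta_1} \otimes \cdots \otimes \klrv{\beta_m} \hookrightarrow \oklrv{\beta + {}^\theta\beta_1 + \cdots + {}^\theta\beta_m}
\]
by iterating the same rule: the generators of the orientifold factor are sent to themselves (including $\tau_0$), while the generators of the $k$th KLR factor are shifted by the appropriate offset. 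Exactly as before, this embedding factors in two ways through the intermediate embeddings obtained by bracketing either $(\klrv{\beta_1} \otimes \klrv{\beta_2})$ first or $(\oklr \otimes \klrv{\beta_1})$ first, which yields the natural isomorphism $(M \acts N) \circ N' \cong M \acts (N \circ N')$ by identifying both sides with
\[
\oklrv{\beta + {}^\theta\beta_1 + {}^\theta\beta_2}\, {}^\theta e_{\beta;\beta_1,\beta_2} \otimes_{\oklr \otimes \klrv{\beta_1} \otimes \klrv{\beta_2}} (M \otimes N \otimes N').
\]
The unitor $M \acts \triv \cong M$ is immediate from ${}^\theta e_{\beta,0} = 1$. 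The hexagon axiom for the module structure reduces, again, to the transitivity of ${}^\theta\iota$ applied to three KLR factors.

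The only real subtlety, and what I would expect to be the main bookkeeping obstacle, is checking that the formulae \eqref{eq: induc1}--\eqref{eq: induc2} (together with the rule $\tau_0 \otimes 1 \mapsto \tau_0 \cdot {}^\theta e$) genuinely define \emph{homomorphisms} into the larger (orientifold) KLR algebra on the nose, not just up to the idempotent truncation. This amounts to verifying compatibility with the deformed braid relations involving $\tau_0$ in Definition~\ref{def: oklr}; but these relations only couple neighbouring strands, so a generator coming from a KLR factor far from position $1$ never interacts with $\tau_0$, and the check localizes to the same computations done in the proof of Proposition~\ref{pro: polrep oklr}. Once this is in place, the monoidality of the triple $(\gmodv{\mathcal{R}},\circ,\triv)$ and the module structure on $\gmodv{{}^\theta\mathcal{R}(\lambda)}$ follow formally.
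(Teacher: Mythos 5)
Your plan is correct and fills in the details behind the paper's one-line assertion that the verification is ``routine''; the transitivity-of-embeddings strategy (identifying iterated $\circ$ and $\acts$ with a single tensor product over the long parabolic subalgebra via $\iota_{\underline{\beta}}$, resp.\ $\ttt\iota_{\beta_0,\underline{\beta}}$) is exactly what the paper's phrasing and its use of the long embeddings~\eqref{eq: long ind inc klr} and~\eqref{eq: long ind inc} implicitly presuppose. The one small redundancy: the ``only real subtlety'' you flag at the end --- that $\iota_{\beta,\beta'}$ and $\ttt\iota_{\beta,\beta'}$ are genuine algebra homomorphisms --- is already asserted as a fact by the paper when it introduces those maps, so your proof can simply cite it rather than re-verify it.
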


\begin{proof}
It is routine to check that the conditions in the definition of a monoidal action are satisfied. 
\end{proof} 

The embedding \eqref{eq: induction inc} generalizes to an embedding 
\eq \label{eq: long ind inc}
\ttt\iota_{\underline{\beta}} \colon \oklrvv{\beta_0}{\underline{\beta}} := \oklrv{\beta_0} \otimes \klrv{\beta_1} \otimes \cdots \otimes \klrv{\beta_m} \to \oklrv{\beta_0+\ttt\norm{\underline{\beta}}}
\eneq
for any $\beta_0 \in \N[J]^\theta$ and $\underline{\beta} \in (\N[J])^m$. The embedding \eqref{eq: long ind inc} gives rise to a triple of adjoint functors $(\ttt\Ind_{\beta_0,\underline{\beta}},\ttt\Res_{\beta_0,\underline{\beta}}, \ttt\Coind_{\beta_0,\underline{\beta}})$ between categories of graded modules. 

\begin{lemma} \label{lem: coind} 
Let $M_0 \in \fmodv{\oklr}$ and $M_i \in \fmodv{\klrv{\beta_i}}$. Then, up to a grading shift, we have 
\[
\textstyle \ttt\Coind_{\beta_0,\underline{\beta}}(M_0 \otimes \bigotimes M_i) 
\cong \ttt\Ind_{\beta_0,\theta(\underline{\beta})}(M_0 \otimes \bigotimes M_i^\dagger) 
\cong \ttt\Coind_{\beta_0,\norm{\underline{\beta}}} (M_0 \otimes (\Coind_{\underline{\beta}}(\bigotimes M_i)), 
\]
where $\theta(\underline{\beta}) = (\theta(\beta_1),\cdots,\theta(\beta_m))$ and $-^\dagger$ is the twist defined below Lemma \ref{lem: klr isos for twist}. 
\end{lemma}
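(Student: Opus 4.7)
The plan is to prove the two isomorphisms separately.

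The second isomorphism follows from transitivity of coinduction. Consider the chain of non-unital ring inclusions
\begin{equation*}
\oklrvv{\beta_0}{\underline{\beta}} \hookrightarrow \oklrvv{\beta_0}{\norm{\underline{\beta}}} \hookrightarrow \oklrv{\beta_0+\ttt\norm{\underline{\beta}}},
\end{equation*}
where the first inclusion is $\id_{\oklrv{\beta_0}} \otimes \iota_{\underline{\beta}}$ (with $\iota_{\underline{\beta}}$ from \eqref{eq: long ind inc klr}) and the second is $\ttt\iota_{\beta_0, \norm{\underline{\beta}}}$ from \eqref{eq: induction inc}. Since $\Coind$ is transitive with respect to composition of ring inclusions, we obtain
\begin{equation*}
\ttt\Coind_{\beta_0,\underline{\beta}} \cong \ttt\Coind_{\beta_0,\norm{\underline{\beta}}} \circ (\id \otimes \Coind_{\underline{\beta}}),
\end{equation*}
up to the standard grading shift built into the definition of $\Coind$ as a graded functor.

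For the first isomorphism, I would exploit the fact that the extension $\oklrvv{\beta_0}{\underline{\beta}} \subset \oklrv{\beta_0+\ttt\norm{\underline{\beta}}}$ is free of finite rank. By Proposition \ref{thm: oklr PBW}, the big algebra admits a PBW basis which restricts to a basis $\{\tau_w \mid w \in \tcoset{\beta_0}{\underline{\beta}}\}$ as both a left and right module over the subalgebra, where $\tcoset{\beta_0}{\underline{\beta}}$ is the set of shortest left coset representatives for the parabolic $\weyl_{N_0} \times \sym_{N_1} \times \cdots \times \sym_{N_m} \subset \weyl_n$ with $N_0 = \Norm{\beta_0}_\theta$, $N_i = \Norm{\beta_i}$ for $i \geq 1$, and $n = N_0 + N_1 + \cdots + N_m$. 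Let $w_\bullet$ be the longest element of this coset set. A direct computation using the decomposition $w_\bullet = \ttt w_0 \cdot (\ttt w_0^{N_0} \times w_0^{N_1} \times \cdots \times w_0^{N_m})^{-1}$ shows that $w_\bullet$ fixes the positions $\{1,\ldots,N_0\}$ of the $\oklrv{\beta_0}$ block and restricts to ${}^\theta w$ on each of the subsequent blocks of positions corresponding to $\beta_i$, namely it reverses the positions within each block and applies $\theta$ to the letters.

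Hence conjugation by $\tau_{w_\bullet}$ realizes an isomorphism of subalgebras $\oklrvv{\beta_0}{\underline{\beta}} \xrightarrow{\sim} \oklrvv{\beta_0}{\theta(\underline{\beta})}$ which acts as the identity (or more precisely, an inner automorphism) on the $\oklrv{\beta_0}$ factor and coincides with the twist \eqref{inv on oklr} on each $\klrv{\beta_i}$ factor. The first isomorphism then reduces to the standard Frobenius reciprocity statement: $\ttt\Coind_{\beta_0,\underline{\beta}}(N)$ and $\ttt\Ind_{\beta_0,\theta(\underline{\beta})}(N^{\alpha^{-1}})$ agree up to a grading shift depending only on $\deg \tau_{w_\bullet}$, where $\alpha$ denotes the Nakayama automorphism above. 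Applied to $N = M_0 \otimes \bigotimes M_i$, the twist $N^{\alpha^{-1}}$ becomes $M_0 \otimes \bigotimes M_i^\dagger$ by the very definition of $M_i^\dagger$ given below Lemma \ref{lem: klr isos for twist}.

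I expect the main obstacle to be the careful combinatorial verification that conjugation by $\tau_{w_\bullet}$ induces precisely the twist \eqref{inv on oklr} on each $\klrv{\beta_i}$ factor, including the signs $x_l \mapsto -x_{n-l+1}$ and $\tau_k \mapsto -\tau_{n-k}$, as well as the fact that no correction is required on the $\oklrv{\beta_0}$ factor (beyond an inner automorphism, which does not change the isomorphism class of the module). This proceeds by induction on a reduced expression for $w_\bullet$, working through the quadratic, braid, and mixed relations of Definition \ref{def: oklr}; the resulting grading shift is routinely computed from $\deg \tau_{w_\bullet}$.
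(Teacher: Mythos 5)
Your proposal is correct and takes essentially the approach the paper relies on: the paper's proof is a one-line reference to Lauda--Vazirani's Theorem~2.2, which proceeds exactly by exhibiting the big algebra as a free module over the parabolic subalgebra with top basis element $\tau_{w_\bullet}$ and reading off the resulting Nakayama-type twist relating $\Coind$ to $\Ind$. Your combinatorial identification of $w_\bullet$ (fixing the $\oklrv{\beta_0}$-block and restricting to the sign/$\theta$-twist \eqref{inv on oklr} on each $\klrv{\beta_i}$-block), together with the transitivity argument for the second isomorphism, are precisely the orientifold counterparts of the steps carried out in \emph{loc.\ cit.}
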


\begin{proof}
The proof is analogous to that of \cite[Theorem 2.2]{Lauda-Vazirani}. 
\end{proof}

Let $\beta_0 \in \N[J]^\theta$ and $\beta_1, \beta_2 \in \N[J]$. Define 
\[
M_1 \lcirc M_2 = \Coind_{\beta_1,\beta_2} (M_1 \otimes M_2), \quad M_0 \lacts M_1 = \ttt\Coind_{\beta_0,\beta_1} (M_0 \otimes M_1),  
\]
for $M_i$ as in Lemma \ref{lem: coind}. 
\begin{corollary}
The category $\gmodv{\mathcal{R}}$ is also monoidal with product $\hat{\circ}$ and unit $\mathds{1}$. Moreover, there is a monoidal action of $\gmodv{\mathcal{R}}$ on $\gmodv{{}^\theta \mathcal{R}(\lambda)}$ via $\lacts$. 
\end{corollary}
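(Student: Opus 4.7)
The plan is to mirror the proof of the preceding proposition, replacing the convolution product with its coinductive analogue. The essential formal input is that iterated coinduction inherits associativity from iterated restriction by adjointness, so the argument is largely categorical.

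First I would construct the associativity constraints. Both $\Coind_{\beta_1+\beta_2,\beta_3}\circ(\Coind_{\beta_1,\beta_2}\boxtimes\id)$ and $\Coind_{\beta_1,\beta_2+\beta_3}\circ(\id\boxtimes\Coind_{\beta_2,\beta_3})$ are right adjoint to restriction along the triple embedding $\iota_{\beta_1,\beta_2,\beta_3}$ from \eqref{eq: long ind inc klr}, which itself factors, evidently, in two canonically equal ways through the double embeddings. By uniqueness of right adjoints, these iterated coinductions are canonically isomorphic to $\Coind_{\beta_1,\beta_2,\beta_3}$ and hence to each other; the resulting natural transformation is the associator for $\lcirc$. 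The same argument, replacing \eqref{eq: long ind inc klr} by the mixed embedding \eqref{eq: long ind inc}, provides the associativity and module-coherence constraints for $\lacts$.

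Next I would verify the unit axiom. When $\beta'=0$, the algebra $\klrv{0}$ is canonically $\cor$ and the embedding \eqref{eq: induction inc klr} is the identity, so the corresponding coinduction is the identity functor; hence $\mathds{1}\lcirc M\cong M\cong M\lcirc\mathds{1}$ and, analogously for \eqref{eq: induction inc}, $M\lacts\mathds{1}\cong M$. The pentagon and triangle coherence relations are then forced by the universal properties above, since all structural isomorphisms are canonical units/counits of adjunctions.

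I do not foresee any serious obstacle: the argument is formal and entirely parallel to the convolution case already handled. An alternative, essentially equivalent, route would be to invoke Lemma \ref{lem: coind} to transport the monoidal structure and action of the previous proposition along the self-duality $(-)^\dagger$, once one observes (directly from \eqref{inv on klr} and \eqref{inv on oklr}) that $(-)^\dagger$ is an auto-equivalence intertwining $\circ$ with $\lcirc$ and $\acts$ with $\lacts$ up to a uniform grading shift.
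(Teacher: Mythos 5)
The paper offers no explicit proof of this Corollary — it is stated immediately after Lemma \ref{lem: coind} and the definitions of $\lcirc$ and $\lacts$, with the evident intention that it follows from that lemma by transporting the monoidal structure and action of the previous Proposition along the indicated isomorphisms. Your ``alternative'' route is therefore the one closest to the author's intent, while your main route (uniqueness of right adjoints to restriction along the factorable embeddings \eqref{eq: long ind inc klr} and \eqref{eq: long ind inc}) is a different, self-contained argument. Both are sound, and the adjunction argument has the advantage of not requiring any ``up to grading shift'' bookkeeping: it establishes the coherence data for $\lcirc$ and $\lacts$ directly, exactly as one would have done for $\circ$ and $\acts$ using left adjoints, and avoids needing to track how a twisted equivalence interacts with the associator.

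One small imprecision in the alternative route as you phrase it: Lemma \ref{lem: coind} is a statement about the \emph{orientifold} coinduction $\ttt\Coind$, so it directly handles the $\lacts$ action on $\gmodv{{}^\theta \mathcal{R}(\lambda)}$; but for the $\lcirc$ product on $\gmodv{\mathcal{R}}$ itself one must invoke the corresponding statement for ordinary KLR coinduction, which is not Lemma \ref{lem: coind} but rather the Lauda--Vazirani result \cite[Theorem 2.2]{Lauda-Vazirani} cited in its proof. Moreover that result identifies $\Coind_{\beta_1,\beta_2}(M_1 \otimes M_2)$ with $\Ind_{\beta_2,\beta_1}(M_2 \otimes M_1)$ up to a shift — i.e.\ $\lcirc$ is (up to shift) the \emph{opposite} of $\circ$, not $\circ$ conjugated by $(-)^\dagger$ — so the phrase ``$(-)^\dagger$ intertwines $\circ$ with $\lcirc$'' is not quite what the lemma says. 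None of this is a genuine gap, since an opposite monoidal structure is of course still a monoidal structure, but the transport is by reversing order rather than by the duality you name.
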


The functors $P \mapsto P^\sharp = \Hom_{\oklrn{m}}(P,\oklrn{m})$ and $M \mapsto M^\flat = \Hom_{\cor}(P,\cor)$ on $\pmodv{\oklrn{m}}$ and $\fmodv{\oklrn{m}}$, respectively, descend to $\Ql$-antilinear involutions on the corresponding Grothendieck groups. We also have an analogue of the Khovanov-Lauda pairing
\[
(\cdot , \cdot) \colon [\pmodv{\oklr}] \times [\fmodv{\oklr}] \to \mathcal{A}, \quad ([P],[M]) \mapsto \gdim (P^\omega \otimes_{\oklr} M), 
\] 
where $P^\omega$ is the twist of $P$ by the anti-involution \eqref{anti-invo}. 

Moreover, set $\oklrn{m} = \bigoplus_{\Norm{\beta}_\theta = n} \oklr$ and ${}^\theta e_{m,\beta'} = \oplus_{\Norm{\beta}_\theta = m} {}^\theta e_{\beta,\beta'}$. 
Abbreviate $\ttt\Ind_{m,i}^{m+1} = \oklrn{m+1} \otimes_{\oklrn{m,i}} -$ and $\ttt\Coind_{m,i}^{m+1} = \Hom_{\oklrn{m,i}}(\oklrn{m+1},-)$, with $\oklrn{m,i} = \oklrn{m}\otimes \mathcal{R}(i)$. 
Setting 
\begin{alignat*}{3}
F_i(P) \ &=& \ \ttt\Ind_{m,i}^{m+1} (P \otimes P(i)), \quad& \ E_i(P) \ &=& \  L(i) \otimes_{\mathcal{R}(i)} \ttt e_{m-1,i}P \\ 
F^*_i(M) \ &=& \ \ttt\Coind_{m,i}^{m+1} (M \otimes L(i)), \quad& \ E^*_i(M) \ &=& \  \ttt e_{m-1,i} M, 
\end{alignat*}
defines exact functors  
\[
\begin{tikzcd} 
\pmodv{\oklrn{m}} \arrow[rr, bend left = 15, "F_i"] & &  \pmodv{\oklrn{m+1}} \arrow[ll,  bend left = 15, "E_i"]
\end{tikzcd}
\]
\[
\begin{tikzcd} 
\fmodv{\oklrn{m}} \arrow[rr, bend left = 15, "F_i^*"] & &  \fmodv{\oklrn{m+1}} \arrow[ll,  bend left = 15, "E_i^*"]
\end{tikzcd}
\]
commuting with the dualities $-^\sharp$ and $-^\flat$. 
We will use the same notation for the induced operators on the corresponding Grothendieck groups.

We now recall the main theorem \cite[Theorem 8.31]{VV-HecB} on the categorification of modules over the Enomoto-Kashiwara algebra. 

\begin{theorem}[Varagnolo-Vasserot] \label{thm: VV EKmod categ} 
The operators $F_i, E_i$ (resp.\ $F_i^*, E_i^*$) define a representation of $\EK$ on $\Qq \otimes_{\Ql} [\pmodv{{}^\theta \mathcal{R}(\lambda)}]$ (resp.\ $\Qq \otimes_{\Ql} [\fmodv{{}^\theta \mathcal{R}(\lambda)}]$). Moreover, there exists a unique pair of adjoint $\mathsf{P}^\theta$-graded $\Ql$-linear isomorphisms 
\[
\tcat \colon  \EKmil \xrightarrow{\sim} [\pmodv{{}^\theta \mathcal{R}(\lambda)}], \quad 
\tcat^* \colon [\fmodv{{}^\theta \mathcal{R}(\lambda)}] \xrightarrow{\sim} \EKmiu
\] 
which, upon base change to $\Qq$, become isomorphisms of $\EK$-modules. 
They intertwine the bar involution on $\EKm$ with the involutions $-^\sharp$ and $-^\flat$. 
\end{theorem}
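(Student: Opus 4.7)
The plan is to lift the Khovanov--Lauda argument for $\klr$, recalled just above, to the orientifold setting, with the generator $\tau_0$ responsible for the new ``$\theta$-twisted" piece in the key commutator relation. I would first define the candidate map on the highest-weight vector by $\tcat(v_\lambda) = [\ttriv]$ and propagate using the assignment $F_i \leftrightarrow \ttt\Ind_{m,i}^{m+1}(- \otimes P(i))$; the dual map $\tcat^*$ is then pinned down by the pairing requirement $(\tcat(u),[M]) = (u, \tcat^*([M]))$.

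The main obstacle is checking that the functors $F_i, E_i$ satisfy the defining relations of $\EK$ on $\Qq \otimes [\pmodv{\ttt\mathcal{R}(\lambda)}]$, the hardest being
\[ E_i F_j = q^{-\alpha_i\cdot\alpha_j} F_j E_i + \delta_{ij} + \delta_{\theta(i),j} T_i. \]
I would prove this via a Mackey-type filtration of the bimodule ${}^\theta e_{m-1,i} \cdot \oklrn{m+1} \cdot {}^\theta e_{m,j}$ by the position of the freshly introduced letter $j$ in a composition $\nu \in \twor$. In the ordinary KLR setting this filtration produces exactly two summands, yielding $q^{-\alpha_i\cdot\alpha_j}F_j E_i$ (bulk) and $\delta_{ij}$ (boundary). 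In the orientifold setting the relations in Definition \ref{def: oklr} involving $\tau_0$--specifically the quadratic relation $\tau_0^2 e(\nu) = Q_{\nu_1}(-x_1)e(\nu)$ and the mixed relation for $(\tau_0 x_1 + x_1 \tau_0)e(\nu)$--produce a third summand, corresponding to $j$ crossing position $1$ through $\tau_0$ and becoming $\theta(j)$; it contributes precisely $\delta_{\theta(i),j}T_i$, which is the novel term absent from the KLR case. The remaining $\EK$-relations are routine: the $\mathsf{P}^\theta$-grading gives commutativity and $\theta$-invariance of the $T_i$, as well as the conjugation identities $T_iE_jT_i^{-1}=q^{(\alpha_i+\alpha_{\theta(i)})\cdot\alpha_j}E_j$; while the $q$-Serre relations for the $F_i$ and the $E_i$ reduce to their ordinary KLR counterparts, since they involve only the $\tau_k$ with $k\geq 1$.

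Once the $\EK$-module structure is in place, well-definedness and injectivity of $\tcat$ follow from part (1)(c) of Proposition \ref{pro:EKm}: the class $[\ttriv]$ sits in the $\ttt\lambda$-weight space of $\Qq \otimes [\pmodv{\ttt\mathcal{R}(\lambda)}]$ and is annihilated by every $E_i$ (trivially, as $\oklrv{0}=\cor$), so generates a nonzero cyclic $\EK$-submodule receiving a surjection from $\EKm$; irreducibility of $\EKm$ over $\Qq$ forces this surjection to be an isomorphism, and freeness of $\EKmil$ descends the injection to $\Ql$. For surjectivity I would invoke the shuffle realization of \S \ref{sec: EK}: the \emph{standard modules} $\ttt\Delta_\nu$, obtained by inducing tensor products of cuspidal modules indexed by the Lyndon factorization of $\nu$, are natural categorifications of the PBW vectors $\tpbw{\nu}$, and Lemma \ref{lem: standard char} matches their characters up to the scalar $\tkap$. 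A triangularity argument with respect to the anti-lexicographic order on $\tgooda$, combined with Proposition \ref{prop: dim of V modules tkpf} (or its generalization to $\lambda \neq 0$ via \cite[Theorem 4.15]{Enomoto-Kashiwara-08}), then shows that $\tcat$ is surjective.

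Finally, bar-involution compatibility $\tcat(\overline{u}) = \tcat(u)^\sharp$ is verified at the generator, where $[\ttriv]^\sharp = [\ttriv]$, and propagated inductively using that $\ttt\Ind_{m,i}^{m+1}(- \otimes P(i))$ intertwines $-^\sharp$ with the algebraic $F_i$-action up to the grading shift already encoded in formula \eqref{eq:Faction}. Adjointness of $(\tcat,\tcat^*)$ with respect to the form on $\EKm$ and the Khovanov--Lauda pairing then reduces to the Frobenius reciprocity-type identity $\hhom_{\oklrn{m+1}}(F_i P, M) \cong \hhom_{\oklrn{m}}(P, E_i^* M)$, furnished by the adjoint triple $(\ttt\Ind, \ttt\Res, \ttt\Coind)$ attached to the inclusion \eqref{eq: induction inc}.
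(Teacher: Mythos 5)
The paper does not prove this theorem at all: the sentence preceding it reads ``We now recall the main theorem \cite[Theorem 8.31]{VV-HecB}'', and the theorem is attributed in its name to Varagnolo and Vasserot. So there is no ``paper's proof'' to compare against; the statement is imported from \cite{VV-HecB}, where it is established by geometric means (perverse sheaves on moduli of orthogonal/symplectic quiver representations, the decomposition theorem, and a comparison of Euler forms), not by the algebraic/combinatorial route you sketch. Your proposal is therefore a genuinely different approach from the source, closer in spirit to a Kleshchev--Ram-style categorification argument.

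As a standalone outline your sketch has the right skeleton for the first half: the three-term Mackey filtration of ${}^\theta e_{m-1,i}\cdot\oklrn{m+1}\cdot{}^\theta e_{m,j}$ by the landing position of the new strand, with $\tau_0$ producing the extra $\delta_{\theta(i),j}T_i$ term, is indeed what makes $E_iF_j=q^{-\alpha_i\cdot\alpha_j}F_jE_i+\delta_{ij}+\delta_{\theta(i),j}T_i$ hold, and the injectivity/cyclicity argument via Proposition \ref{pro:EKm}(1)(c) and irreducibility of $\EKm$ is sound. The weak link is surjectivity for general $\lambda$, which is exactly where \cite{VV-HecB} needs geometry. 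You appeal to Lemma \ref{lem: standard char}, Proposition \ref{prop: dim of V modules tkpf} and the shuffle-theoretic standard elements $\ttt\Delta_\nu$, but all of these live under Assumption \ref{assumption2} ($\lambda=0$); there is no established analogue in the paper (or, as far as is known, in the literature) for $\lambda\neq 0$. Moreover, the module-theoretic standard modules $\tdelta(\nu)$ of \S\ref{sec: rep th} and the injectivity of $\tchq$ (needed to transfer triangularity of $\ttt\Delta_\nu$ to a rank bound on $[\pmodv{{}^\theta\mathcal{R}(\lambda)}]$) are established in the paper only as consequences of this very theorem (see Corollary \ref{cor: VV EKmod categ}), so invoking them here would be circular. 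To make your argument self-contained one would instead need an independent upper bound on the number of simple $\oklr$-modules of weight $\beta$, which for $\lambda\neq 0$ is precisely the hard input that the geometric proof supplies.
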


If $M \in \gmodv{\oklr}$ and $\nu \in \comp$, we call $M_\nu = e(\nu) M$ the $\nu$-weight space of $M$. 
The \emph{character} of a $\oklr$-module $M$ is 
$\tchq(M) = \sum_\nu \gdim(e(\nu)M) \cdot \nu \ \in \tFF.$ 
This gives rise to an $\Ql$-linear map $\tchq \colon  [\fmodv{{}^\theta \mathcal{R}(\lambda)}] \to \tFF$. We call $\max(\tchq(M))$ (if it exists) the highest weight of $M$. 

\begin{corollary} \label{cor: VV EKmod categ} 
The following triangle commutes: 
\[ \begin{tikzcd}[column sep = 1.25 em, row sep = 1 em] 
 & \left[\fmodv{{}^\theta \mathcal{R}(\lambda)}\right] \arrow[ld, "\ttt\gamma^*", swap] \arrow[rd, "\tchq"] & \\
\EKmiu \arrow[rr,"\ttt\Psi"] && \tFF
\end{tikzcd} \] 
The map $\tchq$ is injective and $\tchq(M \acts N) = \tchq(M) \acts \chq(N)$. 
\end{corollary}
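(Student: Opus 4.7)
The plan is to establish the three assertions (commutativity, injectivity, and multiplicativity) one at a time, deriving the last two from the first once the triangle is in place. For commutativity, I would use the intertwining property of $\ttt\gamma^*$ from Theorem \ref{thm: VV EKmod categ}, namely that $E_i^*$ on $[\fmodv{\oklr}]$ corresponds to $E_i$ on $\EKmiu$. Given $M \in \fmodv{\oklr}$ with $\Norm{\beta}_\theta = n$ and $\nu = \nu_1 \cdots \nu_n \in \comp$, iterated application of the definition $E_i^*(N) = \ttt e_{m-1,i}N$ gives $E^*_{\nu_1} \cdots E^*_{\nu_n}(M) = e(\sigma(\nu))M$ as a graded vector space (since the trivial algebra $\oklrv{0}$ equals $\cor$). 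Passing through $\ttt\gamma^*$, this equals $\ttt\partial_\nu(\ttt\gamma^*(M))$ in the degree-$\ttt\lambda$ component of $\EKm$, which is the line $\Qq\cdot v_\lambda$. Hence $\ttt\partial_\nu(\ttt\gamma^*(M)) = \gdim(e(\sigma(\nu))M)$ under the identification $\Qq \cdot v_\lambda \cong \Qq$. Substituting into \eqref{tPsi map} and reindexing $\nu \leftrightarrow \sigma(\nu)$ yields $\ttt\Psi(\ttt\gamma^*(M)) = \tchq(M)$.

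Injectivity of $\tchq$ is then immediate: $\ttt\gamma^*$ is an isomorphism by Theorem \ref{thm: VV EKmod categ} and $\ttt\Psi$ is injective by Proposition \ref{pro:shufflemodulereal}. For the multiplicativity $\tchq(M \acts N) = \tchq(M) \acts \chq(N)$, I would use a Mackey-type decomposition of the convolution module. Explicitly, the PBW theorem (Proposition \ref{thm: oklr PBW}) implies that $\oklrv{\beta + \ttt\beta'}\, {}^\theta e_{\beta,\beta'}$ is free as a right $\oklrvv{\beta}{\beta'}$-module with basis $\{\tau_w \cdot {}^\theta e_{\beta,\beta'}\}$ for $w$ ranging over $\tcoset{\Norm{\beta}_\theta}{\Norm{\beta'}}$. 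Each such $\tau_w$, acting on weight $\mu\mu' \in \ttt J^{\beta+\ttt\beta'}$, sends it to weight $w \cdot \mu\mu'$ and shifts the grading by $\deg(\tau_w e(\mu\mu'))$. A direct computation of this degree, using the grading conventions recalled at the end of \S \ref{ss: quiver klr} together with a reduced expression for $w$, matches exactly the exponent $-d(\nu,\nu',w)$ in Definition \ref{def: q shuffle module}. Summing over $w$ and $(\mu,\mu')$ reproduces the shuffle formula \eqref{sh action}.

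The main obstacle is the last verification: matching the grading shift of $\tau_w$ to the three-term expression for $d(\nu, \nu', w)$. This requires decomposing a reduced expression for $w \in \tcoset{\Norm{\beta}_\theta}{\Norm{\beta'}}$ into occurrences of $s_0$ and of $s_k$ $(k \geq 1)$ and computing each contribution separately. The $s_k$ contributions produce the first sum in the definition of $d(\nu,\nu',w)$ in the standard type-$\mathtt{A}$ KLR manner; each $s_0$ applied at position $l$ simultaneously negates the index and reflects it past the previous letters, which, combined with the $\theta$-symmetry of the quiver datum, accounts for the second sum (pairing $w^{-1}(-k)$ with $w^{-1}(l)$) while the framing degree $\ttt\lambda(\nu_l)$ of $\tau_0$ contributes the last sum. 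Once this bookkeeping is carried out, the remaining claims follow from purely formal manipulations already present in the paper.
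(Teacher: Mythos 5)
Your proposal is correct and, modulo the fact that the paper simply cites Kleshchev--Ram Theorem 4.4(3) as its ``analogous'' proof, it takes the same route: the triangle is established by iterated $E_i^*$-restriction and the intertwining from Theorem \ref{thm: VV EKmod categ}, injectivity follows formally, and multiplicativity comes from the Mackey/PBW filtration of the induced module with the degree of $\tau_w$ matched against the exponent $-d(\mu,\mu',w)$ in \eqref{sh action}. One small caution: the identity $E^*_{\nu_1}\cdots E^*_{\nu_n}(M)=e(\sigma(\nu))M$ and the subsequent appearance of $\ttt\partial_\nu$ are only correct if $E^*_{\nu_1}\cdots E^*_{\nu_n}$ (and likewise $\ttt\partial_\nu$ in \eqref{tPsi map}) is read as applying $E^*_{\nu_1}$ first --- which is the reading forced by $\mathbf{E}_i\circ\ttt\Psi=\ttt\Psi\circ E_i$ in Proposition \ref{pro:shufflemodulereal} --- so you should state this convention explicitly to avoid an apparent $\sigma$-mismatch.
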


\begin{proof}
The proof is analogous to that of \cite[Theorem 4.4(3)]{Kleshchev-Ram-11}. 
\end{proof}

\subsection{Reminder on KLR representation theory} 

An irreducible $\klr$-module $L$ is called \emph{cuspidal} if $\max(\chq(L)) \in \lyn$, i.e., its highest weight is a good Lyndon word. By \cite[Proposition 8.4]{Kleshchev-Ram-11}, for each $\nu \in \lyn$, there exists a unique cuspidal irreducible $\klrv{\norm{\nu}}$-module $L(\nu)$. 

Let $\nu = (\plyn{l})^{n_l} \cdots (\plyn{1})^{n_1} \in \good$. The corresponding standard and costandard modules are, respectively,  
\[ \Delta(\nu) = L(\plyn{l})^{\circ n_l} \circ \cdots \circ L(\plyn{1})^{\circ n_1}\{s(\nu)\}, \quad 
\nabla(\nu) = L(\plyn{l})^{\circ n_l} \lcirc \cdots \lcirc L(\plyn{1})^{\circ n_1}\{s(\nu)\}, 
\] 
with $s(\nu)$ as in \eqref{eq: s ts}. 

\begin{theorem}[Kleshchev-Ram, McNamara] \label{thm: klr rep theory} 
Let $\nu \in \good$. Then: 
\be
\item The standard $\klr$-module $\Delta(\nu)$ has an irreducible head $L(\nu)$, and the costandard module $\nabla(\nu)$ has $L(\nu)$ as its socle. 
\item The highest weight of $L(\nu)$ is $\nu$, and $\dim_q L(\nu)_\nu = \kappa_\nu$. 
\item $L(\nu) = L(\nu)^\flat$. 
\item $\{L(\nu) \mid \nu \in \good \}$ is a complete and irredundant set of irreducible graded $\klr$-modules up to isomorphism and degree shift. 
\item If $L(\mu)$ is a composition factor of $\Delta(\nu)$ (resp.\ $\nabla(\nu)$), then $\mu \leq \nu$ (resp.\ $\mu \leq' \nu$). Moreover, $L(\nu)$ appears in $\Delta(\nu)$ and $\nabla(\nu)$ with multiplicity one. 
\item If $\nu = \mu^n$ for a good Lyndon word $\mu$, then $\Delta(\nu) = L(\nu)$. 
\ee
\end{theorem}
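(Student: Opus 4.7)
The approach is to transport representation-theoretic statements to the shuffle algebra via the injective, multiplicative character map $\chq = \Psi \circ \gamma^*$, and to exploit the Lyndon combinatorics of Section \ref{sec: EK}; the argument mirrors Kleshchev--Ram \cite{Kleshchev-Ram-11}. The starting point is the cuspidal case $\nu \in \lyn$: construct $L(\nu)$ by induction on the length of $\nu$ along its standard factorization $\nu = \nu_{(1)}\nu_{(2)}$ of Proposition \ref{pro: lyndon}(4). The base case $\nu \in J$ is the one-dimensional module over $\cor[x_1]$; the inductive step takes $L(\nu) = \hd\bigl(L(\nu_{(2)}) \circ L(\nu_{(1)})\bigr)$. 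Multiplicativity of $\chq$, together with Proposition \ref{pro: lyn basis} and Lemma \ref{lem: delta and comm}(1), show $\max\chq(L(\nu)) = \nu$ with multiplicity $\kappa_\nu$; uniqueness follows from the injectivity of $\chq$.

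For general $\nu = (\plyn{l})^{n_l} \cdots (\plyn{1})^{n_1} \in \gooda$, multiplicativity of $\chq$ identifies $\chq(\Delta(\nu))$ with an ordered shuffle product of cuspidal characters; the KLR analogue of Lemma \ref{lem: standard char} (the same convexity computation carried out in $\FF$ rather than $\tFFz$) yields $\max\chq(\Delta(\nu)) = \nu$ and $\dim_q e(\nu)\Delta(\nu) = \kappa_\nu$, so $\Delta(\nu)$ has a unique simple quotient $L(\nu)$ of highest weight $\nu$, proving (1) and (2). The dual statement for $\nabla(\nu)$ uses the non-orientifold case of Lemma \ref{lem: coind} (see \cite{Lauda-Vazirani}) to present $\nabla(\nu)$ as an induction of a twisted tensor product, after which Frobenius reciprocity places $L(\nu)$ as the socle. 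The classification (4) follows from the injectivity of $\chq$ and the triangularity of $\{\chq(L(\nu))\}_{\nu \in \gooda}$ with respect to the monomial basis of $\fintd$. Self-duality (3) is forced by $L(\nu)^\flat$ being irreducible with the same character (the involution $-^\flat$ corresponds on Grothendieck groups to the bar involution on $\fintd$, under which $\kappa_\nu \in \Ql$ is fixed). Part (5) is a triangularity statement in the Lyndon basis (anti-lex for $\Delta$, lex for $\nabla$, reflecting the fact that $\hat{\circ}$ is the opposite monoidal structure to $\circ$). Finally, for (6): Lemma \ref{lem: delta and comm}(1) and Assumption \ref{assumption} force $\chq(\Delta(\mu^n))$ to be supported at the single word $\mu^n$ with multiplicity $\kappa_{\mu^n}$, hence $\Delta(\mu^n) \cong L(\mu^n)$.

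The hard part is the inductive construction in the cuspidal step: verifying that $L(\nu_{(2)}) \circ L(\nu_{(1)})$ has a nonzero simple head with highest weight $\nu$ of multiplicity $\kappa_\nu$. This amounts to a module-theoretic lifting of the triangularity between the Lyndon and monomial bases of $\mathbf{U}$, and requires care because the convolution is not semisimple; the key input is the minimality of the Lyndon word $\nu$ in its weight class (a consequence of the standard ordering in Assumption \ref{assumption}), which forces the character of the head to be concentrated at the correct leading term.
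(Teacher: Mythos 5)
The paper does not prove this theorem: it is attributed to Kleshchev--Ram and McNamara in the theorem header, and the ``proof'' is a one-line citation to \cite[Theorem 7.2]{Kleshchev-Ram-11} and \cite[Theorem 3.1]{McNamara-klr1}. You have instead attempted to reconstruct the argument from scratch, roughly along the lines of \cite{Kleshchev-Ram-11}, transported through the paper's shuffle-algebra formalism. That is a legitimate thing to do, and most of the sketch (cuspidal modules via standard factorization, leading-term analysis of $\chq(\Delta(\nu))$, duality via coinduction, classification via triangularity, self-duality by pinning down a grading shift via bar-invariance of $\kappa_\nu$) follows the lines of Kleshchev--Ram, though it remains a sketch precisely at the point you yourself flag as hard (the existence and uniqueness of cuspidal $L(\nu)$ with the prescribed highest-weight space). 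It is worth being aware that in \cite{Kleshchev-Ram-11} the cuspidal module is \emph{not} constructed via the standard factorization inductive head; it is characterized by the vanishing of all restrictions to smaller Lyndon pieces, and its existence is a separate argument.

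There is, however, a genuine error in your treatment of part (6). You claim that Lemma \ref{lem: delta and comm}(1) and Assumption \ref{assumption} force $\chq(\Delta(\mu^n))$ to be supported on the single word $\mu^n$. This is false whenever $\ell(\mu) \geq 2$. Lemma \ref{lem: delta and comm}(1) only gives $\chq(L(\mu)) = q^{?}\,\mu$ (the cuspidal module is one-dimensional), so $\chq(\Delta(\mu^n)) = q^{?}\,\mu^{\circ n}$, and the $n$-fold shuffle power $\mu^{\circ n}$ is supported on \emph{all} words obtained by interleaving $n$ copies of $\mu$ --- generically $\binom{n\ell(\mu)}{\ell(\mu),\dots,\ell(\mu)}$ words, not one. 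The actual proof of (6) in \cite{Kleshchev-Ram-11} and \cite{McNamara-klr1} does not go through a support argument at all; it uses the nil-Hecke algebra structure on $\End(L(\mu)^{\circ n})$ to show directly that $\Delta(\mu^n) = L(\mu)^{\circ n}\{s(\nu)\}$ is already irreducible. Your argument would only prove that the head of $\Delta(\mu^n)$ has multiplicity-one word space at $\mu^n$, which is part (2), not the much stronger irreducibility claim in (6).
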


\begin{proof}
See \cite[Theorem 7.2]{Kleshchev-Ram-11}, \cite[Theorem 3.1]{McNamara-klr1}. 
\end{proof}

\subsection{Orientifold KLR: irreducibles and global dimension} 
\label{sec:oklr irr gd}

Now assume $\lambda = 0$. 

\begin{lemma} \label{lem: sym simple irred}
If $\nu \in \tgooda$ is symmetric, then $\tL(\nu) = \ttriv \acts L(\nu)\{ \ttt s(\nu)\}$ is irreducible. The highest weight of $\tL(\nu)$ is $\nu$, $\tchq \tL(\nu) = \tdcan{\nu}$ and $\dim_q \tL(\nu)_\nu = \ttt\kappa_\nu$. 
\end{lemma}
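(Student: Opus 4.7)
The plan is to compute $\tchq(\tL(\nu))$ explicitly and match it with the dual canonical basis element $\tdcan{\nu}$; all four claims follow immediately once this is done.

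First, by the multiplicativity part of Corollary~\ref{cor: VV EKmod categ} together with $\tchq(\ttriv) = \varnothing$, I have $\tchq(\tL(\nu)) = q^{\ttt s(\nu)}\, \varnothing \acts \chq(L(\nu))$. By the KLR categorification theorem and Leclerc's theorem (the non-orientifold analogue of Corollary~\ref{cor: max of dual can}), $\chq(L(\nu))$ is the image in $\FF$ of the dual canonical basis element indexed by $\nu$, with upper unitriangular PBW expansion having leading term $\kappa_\nu \cdot \nu$.

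Second, I would verify $q^{\ttt s(\nu)}\, \varnothing \acts \chq(L(\nu)) = \tdcan{\nu}$ by the uniqueness in Theorem~\ref{thm: can vs pbw}. Corollary~\ref{cor: max of dual can} gives $\tdcan{\nu} = \tdpbw{\nu}$ and identifies its leading term as $\ttt\kappa_\nu \cdot \nu$. On the left, since $\nu_\theta = \varnothing$ and $\nu^\theta = \nu$, Lemma~\ref{lem: standard char} computes the leading term of $\varnothing \acts (-)$ applied to the shuffle product of cuspidal characters, and a direct comparison shows that the shift $q^{\ttt s(\nu)}$ exactly converts the factor $\kappa_\nu$ arising on the KLR side into $\ttt\kappa_\nu$. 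Both expressions are bar-invariant (the left using the compatibility of $-^\flat$ with $\tchq$ together with $L(\nu) = L(\nu)^\flat$ from Theorem~\ref{thm: klr rep theory}(3) and Theorem~\ref{thm: VV EKmod categ}, the right by Theorem~\ref{thm: can vs pbw}) and upper unitriangular in the PBW basis, so Theorem~\ref{thm: can vs pbw} forces equality.

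Finally, Theorem~\ref{thm: VV EKmod categ} identifies the dual canonical basis $\{\tdcan{\nu}\}$ with the classes of simple modules (the standard consequence, since both families are bar-invariant, positive, and upper unitriangular over the PBW basis). Hence $[\tL(\nu)]$ is a single simple class, and by positivity of Jordan--H\"older multiplicities $\tL(\nu)$ must itself be simple. The statements about highest weight and $\dim_q \tL(\nu)_\nu$ are then immediate from Corollary~\ref{cor: max of dual can}. The main obstacle lies in the matching of $q$-shifts in the second step; this is most cleanly handled by first reducing, via the commutativity Lemma~\ref{lem: delta and comm} applied to disjoint symmetric Lyndon factors (which are nested by Lemma~\ref{lem: sym aux}), to the case $\nu = \xi_k^n$, where Theorem~\ref{thm: klr rep theory}(6) gives $\Delta(\nu) = L(\nu)$ and the calculation collapses to a single $q$-factorial identity.
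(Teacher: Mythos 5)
Your overall strategy — compute $\tchq(\tL(\nu))$, identify it with $\tdcan{\nu}$, and deduce irreducibility — is the same one the paper uses, and your first step (multiplicativity of $\tchq$ giving $\tchq\tL(\nu) = q^{\ttt s(\nu)}\,\varnothing\acts\chq L(\nu)$, together with $\max(\chq L(\nu)) = \nu$ and $\dim_q L(\nu)_\nu = \kappa_\nu$) is sound. However, your final step has a genuine gap. You write that Theorem \ref{thm: VV EKmod categ} ``identifies the dual canonical basis $\{\tdcan{\nu}\}$ with the classes of simple modules (the standard consequence, since both families are bar-invariant, positive, and upper unitriangular over the PBW basis).'' Theorem \ref{thm: VV EKmod categ} only asserts the existence of the isomorphisms $\tcat, \tcat^*$ intertwining bar involutions; it does not say the (dual) canonical basis maps to simple classes. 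The ``standard consequence'' you invoke needs, as prerequisites, precisely the facts about simple $\oaklr$-modules that the paper is in the middle of establishing: that each simple has a well-defined highest $\theta$-good word, that the leading coefficient of its character is bar-invariant, and that simple classes are upper unitriangular in the relevant basis. These are proven in Theorem \ref{thm: main result} parts (2), (3), (5), which come \emph{after} Lemma \ref{lem: sym simple irred} and depend on it. So the argument as stated is circular.

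The paper sidesteps this by a purely combinatorial observation: by Lemma \ref{lem: smallest words}, the symmetric word $\nu$ is the \emph{smallest} word in ${}^\theta J^\beta_+$. Writing $\tchq\tL(\nu) = \sum_{\mu \in {}^\theta J^\beta_+} c_\mu \ttt\mathbf{b}_\mu^*$ with $c_\mu \in \Ql$ (available from Theorem \ref{thm: VV EKmod categ}), and using $\max(\ttt\mathbf{b}_\mu^*) = \mu$ (Corollary \ref{cor: max of dual can}) together with $\max(\tchq\tL(\nu)) = \nu$, every $\mu > \nu$ must have $c_\mu = 0$; but every $\mu \in {}^\theta J^\beta_+$ satisfies $\mu \geq \nu$, so only $c_\nu$ survives, and comparing coefficients of $\nu$ gives $c_\nu = 1$. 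Irreducibility then falls out because all composition factors of $\tL(\nu)$ have highest weight $\nu$, and each would again have character a positive multiple of $\ttt\mathbf{b}_\nu^*$. You should replace your final paragraph with this argument. Your step two (matching the $q$-shifts to show the normalization $\ttt\kappa_\nu$) is the right check to make, but the reduction to $\xi_k^n$ is not needed once you take this route; the paper gets $\dim_q\tL(\nu)_\nu = \ttt\kappa_\nu$ directly from the shuffle computation in the last paragraph of the proof of Lemma \ref{lem: standard char}.
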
 

\begin{proof} 
It follows from Lemma \ref{lem:maxofshuffle}, Lemma \ref{lem: smallest words} and Corollary \ref{cor: VV EKmod categ} that all composition factors of $\tL(\nu)$ have highest weight $\nu$. We know from part (2) of Theorem \ref{thm: klr rep theory} that $\max(\chq(L(\nu))) = \nu$ and $\dim_q L(\nu)_\nu = \kappa_\nu$. The last part of Corollary \ref{cor: VV EKmod categ}, together with an argument analogous to that in the last paragraph of the proof of Lemma \ref{lem: standard char}, then shows that the highest weight of $\tL(\nu)$ is $\nu$ and $\dim_q \tL(\nu)_\nu = \ttt\kappa_\nu$. 

Let $\beta = \tnorm{\nu}$. By Theorem \ref{thm: VV EKmod categ}, $\tchq \tL(\nu) \in \ttt\mathbf{V}_{\Ql,\beta}^{\mathrm{up}}$. Since $\{ \ttt \mathbf{b}_\mu^* \mid \mu \in {}^\theta J^\beta_+ \}$ is an $\Ql$-basis of $\ttt\mathbf{V}_{\Ql,\beta}^{\mathrm{up}}$, we have $\tchq \tL(\nu) = \sum_{\mu \in {}^\theta J^\beta_+} c_\mu \ttt \mathbf{b}_\mu^*$ for some $c_\mu \in \Ql$. By Corollary \ref{cor: max of dual can}, $\max(\ttt \mathbf{b}_\mu^*) = \mu$, and, by Lemma \ref{lem: smallest words}, $\nu$ is the smallest word in ${}^\theta J^\beta_+$. Hence $c_\mu = 0$ unless $\mu = \nu$. Comparing the coefficients of $\nu$ in $\chq L(\nu)$ and $\ttt \mathbf{b}_\nu^*$, we conclude that $c_\nu = 1$. The irreducibility of $\tL(\nu)$ follows directly from the equality $\tchq \tL(\nu) = \tdcan{\nu}$. 
\end{proof} 

For $\nu \in \tgood$, let 
\[
\tdelta(\nu) = \ttriv \acts \Delta(\nu), \quad \tnabla(\nu) = \ttriv \lacts  \nabla(\nu). 
\]

\begin{lemma} \label{lem: qdim tdelta nu}
Let $\nu \in \tgooda$. Then: $\Delta(\nu) = \Delta(\nu^\theta) \circ \Delta(\nu_\theta)$, $\max(\tchq {}^\theta \Delta(\nu)) = \nu$ and $\dim_q ({}^\theta \Delta(\nu))_\nu = \tkap$.  
\end{lemma}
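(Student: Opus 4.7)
The plan is to prove the three assertions of the lemma sequentially, mirroring the proof of Lemma \ref{lem: standard char} but lifting it from the shuffle algebra to the module level.

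For the first identity $\Delta(\nu) = \Delta(\nu^\theta) \circ \Delta(\nu_\theta)$, I would argue by induction on the number of symmetric Lyndon factors in the Lyndon factorization of $\nu$, following the first paragraph of the proof of Lemma \ref{lem: standard char}. The base case with no symmetric factors is trivial. In the inductive step, let $\xi_m$ be the smallest symmetric Lyndon factor of $\nu$. If $\xi_m$ is also the smallest Lyndon factor of $\nu$ overall, I would peel off the rightmost $L(\xi_m)$ from $\Delta(\nu)$ and apply induction. Otherwise, every Lyndon factor of $\nu$ below $\xi_m$ is non-symmetric, and each such $\mu$ satisfies $\xi_m \subset \mu$ by Lemma \ref{lem: sym aux}. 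At this point I would invoke the module-level counterpart of the shuffle identity $\mu \circ \xi_m = \xi_m \circ \mu$ from Lemma \ref{lem: delta and comm}, namely $L(\mu) \circ L(\xi_m) \cong L(\xi_m) \circ L(\mu)$ for good Lyndon $\mu, \xi_m$ with $\xi_m \subset \mu$. Iterating this commutation moves $L(\xi_m)$ to the rightmost position in the convolution defining $\Delta(\nu)$, reducing to the previous case. The grading shift $\{s(\nu)\}$ is compatible with the regrouping because $s$ is additive over Lyndon factorizations, i.e., $s(\nu) = s(\nu^\theta) + s(\nu_\theta)$.

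For the second and third statements, I would apply the character map. By Corollary \ref{cor: VV EKmod categ}, $\tchq(\tdelta(\nu)) = \varnothing \acts \chq(\Delta(\nu))$. Expanding $\chq(\Delta(\nu))$ as $q^{s(\nu)}$ times the iterated shuffle product of the $\chq(L(\plyn{i}))$'s, and using that the highest weight of $\chq(L(\plyn{i}))$ is $\plyn{i}$ with coefficient $\kappa_{\plyn{i}} = 1$ (by Theorem \ref{thm: klr rep theory}(2) together with the Lyndon normalization, cf.\ \cite[Proposition 56]{Leclerc-04}), the remainder of the argument would parallel the last paragraph of the proof of Lemma \ref{lem: standard char}. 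Namely, Lemma \ref{lem:maxofshuffle} would give $\max(\tchq(\tdelta(\nu))) = \nu$, and the coefficient of $\nu$ would be computed by enumerating the $\theta$-shuffle configurations produced by the standard insertions of repeated Lyndon factors, yielding the prefactor $\tkap$.

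The hard part will be establishing the module-level commutation $L(\mu) \circ L(\xi_m) \cong L(\xi_m) \circ L(\mu)$ used in Part~(1). While Lemma \ref{lem: delta and comm} supplies the shuffle-level version, lifting it to modules requires the extra input that the nesting $\xi_m \subset \mu$ forces $\norm{\mu} \cdot \norm{\xi_m} = 0$ in the $\mathtt{A}_\infty$ root system, which is a direct check exploiting the gap of at least two on either side in the nesting. In simply-laced type this orthogonality implies commutation of the corresponding simple KLR modules without a grading shift; this is standard and can be extracted from \cite{Kleshchev-Ram-11, McNamara-klr1}, but its grading-shift details require care.
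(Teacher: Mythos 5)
For the second and third assertions your reduction to Lemma~\ref{lem: standard char} via the character map is essentially the paper's argument, and the additivity $s(\nu) = s(\nu^\theta) + s(\nu_\theta)$ you invoke is correct (the paper in fact uses the slightly stronger input $\dim_q L(\mu) = 1$ for $\mu \in \lyn$ to get the literal equality $\tchq(\tdelta(\nu)) = {}^\theta\Delta_\nu$, but your weaker highest-weight fact suffices for the $\max$ and coefficient claims).

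The gap is in the first part. You correctly identify that the crux is the module-level commutation $L(\mu) \circ L(\xi_m) \cong L(\xi_m) \circ L(\mu)$ when $\xi_m \subset \mu$, but the route you propose --- that orthogonality $\norm{\mu} \cdot \norm{\xi_m} = 0$ alone forces the commutation, as a ``standard'' fact to be extracted from \cite{Kleshchev-Ram-11, McNamara-klr1} --- is not an available citation and does not hold at the stated generality. Orthogonality gives the shuffle-level identity $\mu \circ \xi_m = \xi_m \circ \mu$ (Lemma~\ref{lem: delta and comm}) and hence equality in the Grothendieck group, but upgrading this to an isomorphism of modules needs an extra structural input; in general the degree-zero intertwiner $L(\mu) \circ L(\xi_m) \to L(\xi_m) \circ L(\mu)$ need not be invertible without knowing one of the two sides is irreducible. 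The paper supplies exactly that missing input by a different observation: $\mu\xi_m$ is the lowest good word of weight $\norm{\mu\xi_m}$, so part~(5) of Theorem~\ref{thm: klr rep theory} forces $\Delta(\mu\xi_m) = L(\mu\xi_m) = \nabla(\mu\xi_m)$, and the Lauda--Vazirani swap $\nabla(\mu\xi_m) \cong L(\xi_m)\circ L(\mu)$ then yields the commutation. Your write-up as it stands leaves this step unproved; you would need either to supply the lowest-good-word argument or to give a genuine proof that the degree-zero $R$-matrix is an isomorphism in this situation.
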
 

\begin{proof} 
The proof of the first statement is analogous to the proof of the first statement of Lemma \ref{lem: standard char}. Using the inductive argument and the notation from that proof, one observes that $\mu \xi_m$ is the lowest good word of weight $|\mu \xi_m|$. Part (5) of Theorem \ref{thm: klr rep theory} then implies that $L(\mu) \circ L(\xi_m) = \Delta(\mu \xi_m) = L(\mu \xi_m) = \nabla(\mu \xi_m) = L(\xi_m) \circ L(\mu)$, allowing the induction to proceed. 

Since $\dim_q L(\mu) = 1$, for all $\mu \in \lyn$ (see \cite[\S 8.4]{Kleshchev-Ram-11}), we have $\tchq(\ttt\Delta(\nu)) = \ttt\Delta_\nu$. The second and third statements now follow from the second and third statements of Lemma \ref{lem: standard char}. 
\end{proof} 

\begin{theorem} \label{thm: main result}
Let $\nu \in \tgood$. Then: 
\be 
\item The standard $\oaklr$-module $\tdelta(\nu)$ has an irreducible head $\tL(\nu)$ and the costandard $\oaklr$-module $\tnabla(\nu)$ has $\tL(\nu)$ as its socle.  
\item The highest weight of $\tL(\nu)$ is $\nu$, and $\dim_q \tL(\nu) = \ttt\kappa_\nu$ . 
\item $\ttt L(\nu) = \ttt L(\nu)^\flat$. 
\item $\{\tL(\nu) \mid \nu \in \tgood \}$ is a complete and irredundant set of irreducible graded $\oaklr$-modules up to isomorphism and degree shift. 
\item If $\tL(\mu)$ is a composition factor of $\tdelta(\nu)$ (resp.\ $\tnabla(\nu)$), then $\mu \leq \nu$ (resp.\ $\mu \leq' \nu$). Moreover, $\tL(\nu)$ appears in $\tdelta(\nu)$ and $\tnabla(\nu)$ with multiplicity one. 
\item If $\nu$ is a Lyndon word or $\nu = \nu^\theta$, then $\tdelta(\nu)=\tL(\nu)$ is irreducible. 
\ee
\end{theorem}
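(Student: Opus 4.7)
My plan is to adapt the strategy of Kleshchev--Ram (Theorem \ref{thm: klr rep theory}) to the orientifold setting, using the categorification Theorem \ref{thm: VV EKmod categ} and the combinatorics of $\theta$-good words developed in \S\ref{sec: EK}. I first dispose of the two base cases of (6). The symmetric case $\nu = \nu^\theta$ is covered by Lemma \ref{lem: sym simple irred}. For $\nu \in \tlyn$, Theorem \ref{thm: klr rep theory}(6) gives $\Delta(\nu) = L(\nu)$, so $\tdelta(\nu) = \ttriv \acts L(\nu)$; irreducibility will follow by character matching. Lemma \ref{lem: qdim tdelta nu} yields $\max(\tchq \tdelta(\nu)) = \nu$ with coefficient $\tkap$, while Corollary \ref{cor: max of dual can} gives $\tdcan{\nu} = \tdpbw{\nu}$ for $\nu \in \tlyn$. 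Expanding $\tchq \tdelta(\nu)$ in the dual canonical basis of $\EKup$ (justified by Theorem \ref{thm: VV EKmod categ}) and invoking Lemma \ref{lem: smallest words}, which says that $\nu$ is the smallest word in $\ttt J_+^{\ttt|\nu|}$, will force $\tchq \tdelta(\nu) = \tdcan{\nu}$, whence $\tdelta(\nu)$ is irreducible.

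For general $\nu \in \tgooda = \tgoodb$ (equality from Proposition \ref{prop: dim of V modules tkpf}(ii)), I define $\tL(\nu) := \hd(\tdelta(\nu))$ and prove (1), (2), and (5) simultaneously by induction along the total order on $\tgooda$. Lemma \ref{lem: qdim tdelta nu} immediately gives $\max(\tchq \tdelta(\nu)) = \nu$, so every composition factor $\tL(\mu)$ satisfies $\mu \leq \nu$, giving the highest-weight side of (5). The crux is then: $\tdelta(\nu)$ admits a unique composition factor $\tL$ whose character contains the monomial $\nu$, this $\tL$ occurs with multiplicity one, and $\dim_q \tL_\nu = \tkap$. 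Granting this, (2) and the multiplicity-one part of (5) are immediate; (1) follows because the simple head of $\tdelta(\nu)$ must coincide with this distinguished composition factor; and (4) follows from Corollary \ref{cor: VV EKmod categ} combined with the fact that $\{\tdcan{\mu}\}_{\mu \in \tgooda}$ is a basis of $\EKup$.

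The main obstacle is the uniqueness/multiplicity claim. My strategy is to analyse $\ttt\Res\,\tdelta(\nu)$ along the decomposition $\nu = \nu'\plyn{1}$ (with $\plyn{1}$ the smallest Lyndon factor of $\nu$) via a Mackey-type filtration indexed by shortest double-coset representatives in $\weyl_n$; this is the orientifold analogue of the filtration used by Kleshchev--Ram. The diagonal double coset contributes $\tdelta(\nu') \otimes L(\plyn{1})$, and adjunction together with the inductive hypothesis applied to $\nu'$ will translate this into an upper bound on $\hhom_{\oaklr}(\tdelta(\nu), \tL)$ for any simple $\tL$ whose character contains $\nu$. To rule out contributions from the remaining Mackey terms, I will invoke Lemma \ref{lem:maxofshuffle}, parts (2)--(4) of Lemma \ref{lem: tgood Lyndon props}, and the convexity encoded in Lemma \ref{lem: sym aux}. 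Once (1), (2), and (5) are settled, (3) follows from $\tdelta(\nu)^\flat \cong \tnabla(\nu)$ and the bar-invariance of $\tdcan{\nu}$ from Theorem \ref{thm: can vs pbw}, while the costandard half of (1) is obtained from the standard half by applying $-^\flat$ together with Lemma \ref{lem: coind}.
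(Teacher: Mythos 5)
Your proposal takes a genuinely different route from the paper's, and the difference is substantive. You propose to peel off the smallest Lyndon factor one at a time, writing $\nu = \nu' \plyn{1}$, and to control $\ttt\Res\,\tdelta(\nu)$ via a Mackey-type filtration whose non-diagonal terms must be shown not to contribute the weight $\nu$. The paper avoids a Mackey filtration entirely: it splits $\nu$ once into its symmetric core $\nu^\theta$ and the product of its non-symmetric factors $\nu_\theta$, and invokes the commutation identity $\Delta(\nu)=\Delta(\nu^\theta)\circ\Delta(\nu_\theta)$ from Lemma~\ref{lem: qdim tdelta nu} to write $\tdelta(\nu) \cong \ttt\Ind_{\beta_0,\underline\beta}\bigl(\tL(\nu^\theta)\otimes L(\plyn{l})^{\circ n_l}\otimes\cdots\otimes L(\plyn{1})^{\circ n_1}\bigr)$ on the nose, with the tensor factor $\tL(\vec{\nu})$ already irreducible (Lemma~\ref{lem: sym simple irred} plus parts (5)--(6) of Theorem~\ref{thm: klr rep theory}). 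A single Frobenius adjunction together with the equality $\dim_q\tdelta(\nu)_\nu=\tkap=\dim_q\tL(\vec\nu)_\nu$ then forces the head to be a single irreducible of the right highest weight. This observation is what your proposal is missing, and it is where the real work lies.

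The genuine gap in your argument is the Mackey step. You acknowledge it as ``the main obstacle'' and then outline a plan (invoke Lemma~\ref{lem:maxofshuffle}, parts (2)--(4) of Lemma~\ref{lem: tgood Lyndon props}, and Lemma~\ref{lem: sym aux}) without executing it. That is precisely the content of the theorem: in the type~$B$ Weyl group $\weyl_n$ the double-coset stratification of $\ttt\Res_{\beta_0,\norm{\plyn{1}}}\circ\ttt\Ind$ is more complicated than in type~$A$, because of the $s_0$-reflections which flip a letter to its $\theta$-conjugate, and you would need a precise combinatorial argument showing that no stratum other than the ``identity'' one carries the weight $\nu$. It is not a routine citation of the lemmas you list. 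Note that the paper also needs the commutation of Lyndon factors of $\nu^\theta$ past those of $\nu_\theta$, but that was isolated and proved once in Lemma~\ref{lem: qdim tdelta nu}, via the subword criterion $\xi_m \subset \mu$ of Lemma~\ref{lem: sym aux} and Lemma~\ref{lem: delta and comm}(2); your induction would have to re-derive a Mackey analogue at every step.

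Two further remarks. First, your claim that (3) follows from $\tdelta(\nu)^\flat\cong\tnabla(\nu)$ is not established: $-^\flat$ turns $\acts$ into $\lacts$ and $\circ$ into $\lcirc$, but the interaction with the ordering of the Lyndon factors and the inner convolution powers $L(\plyn{i})^{\circ n_i}$ versus $L(\plyn{i})^{\lcirc n_i}$ requires a nontrivial commutation argument (it is true after the fact, but you cannot use it to prove (3)). The paper gets (3) directly from the categorification theorem (Theorem~\ref{thm: VV EKmod categ} intertwines bar with $-^\flat$) and the bar-invariance of $\tkap$ from Lemma~\ref{lem: pbw conjugate}. Second, the costandard half of (1) inherits the same issue; the paper runs an independent parallel argument using the coinduction--restriction adjunction, Lemma~\ref{lem: coind}, and the opposite lexicographic order $\leq'$ via Lemma~\ref{lem: costandard char}. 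Your handling of (4) and the base cases of (6) is fine and essentially matches the paper.
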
 

\begin{proof}
The structure of the proof is similar to \cite[Theorem 7.2]{Kleshchev-Ram-11} (see also \cite[Theorem 3.1]{McNamara-klr1}). Let us explain the main points. If $\nu_\theta = (\plyn{l})^{n_l} \cdots (\plyn{1})^{n_1}$, let $\beta_0 = \tnorm{\nu^\theta}$, $\underline{\beta} = (n_l\norm{\plyn{l}}, \cdots, n_1\norm{\plyn{1}})$ and abbreviate $\ttt\Res_{\nu} = \ttt\Res_{\beta_0,\underline{\beta}}$, $\ttt\mathcal{R}_\nu = \ttt\mathcal{R}(\beta_0,\underline{\beta})$. 
Also abbreviate $\tL(\vec{\nu}) = \tL(\nu^\theta) \otimes L(\plyn{l})^{\circ n_l} \otimes \cdots \otimes L(\plyn{1})^{\circ n_1}\{s(\nu_\theta)\}$. Let $L$ be an irreducible $\oaklr$-module in the head of $\tdelta(\nu)$. 
By adjunction and the first part of Lemma \ref{lem: qdim tdelta nu}, $\Hom_{\oaklr}(\tdelta(\nu),\tdelta(\nu)) = \Hom_{\ttt\mathcal{R}_\nu}(\tL(\vec{\nu}), \ttt\Res_{\nu} \tdelta(\nu))$ and $0 \neq \Hom_{\oaklr}(\tdelta(\nu), L) = \Hom_{\ttt\mathcal{R}_\nu}(\tL(\vec{\nu}), \ttt\Res_{\nu} L)$. 
Hence we get the commutative diagram 
\[ \begin{tikzcd}[column sep = 1.25 em, row sep = 1 em] 
\tL(\vec{\nu}) \arrow[r, hookrightarrow] \arrow[d, equal] & \ttt\Res_{\nu} \tdelta(\nu) \arrow[r, hookrightarrow] \arrow[d, twoheadrightarrow] & \tdelta(\nu) \arrow[d, twoheadrightarrow] \\
\tL(\vec{\nu}) \arrow[r, hookrightarrow] & \ttt\Res_{\nu} L \arrow[r, hookrightarrow] & L. 
\end{tikzcd} \] 
The injectivity of the two arrows on the left follows from the fact that the $\ttt\mathcal{R}_\nu$-module $\tL(\vec{\nu})$ is irreducible, which is implied by part (5) of Theorem \ref{thm: klr rep theory} and Lemma \ref{lem: sym simple irred}. Part (2) of Theorem \ref{thm: klr rep theory}, Lemma \ref{lem: sym simple irred} and Lemma \ref{lem: qdim tdelta nu} also imply that $\dim_q \tL(\vec{\nu})_\nu = \tkap = \dim_q \tdelta(\nu)_\nu$. Hence $\dim_q L_\nu = \tkap$ as well, implying that the head of $\tdelta(\nu)$ is irreducible. This proves (1) in the case of  standard modules as well as (2). Note that the modules $\tL(\nu)$ we have thus constructed are pairwise non-isomorphic since they have different highest weights. 

Next, (3) follows from \cite[Proposition 2]{VV-HecB} and the fact that $\tkap$ is bar-invariant (Lemma \ref{lem: pbw conjugate}). Part (4) follows from Proposition \ref{prop: dim of V modules tkpf}, Theorem \ref{thm: VV EKmod categ} and the fact that we have constructed $\tkpf(\beta)$ non-isomorphic irreducible graded $\oaklr$-modules $\{ \tL(\nu) \mid \nu \in \tgood \}$. Next, we return to (1) in the case of costandard modules. 
An analogous argument to that in the case of standard modules, using Lemma \ref{lem: costandard char} and the adjunction between restriction and coinduction now shows that $\tnabla(\nu)$ has an irreducible socle with highest weight $\nu$, which, by (4), must be isomorphic to $\tL(\nu)$. 
Part (5) follows immediately from the facts that $\nu = \max(\tchq(\tdelta(\nu))) = \max'(\tchq(\tnabla(\nu)))$ and $\dim_q \tdelta(\nu)_\nu = \dim_q \tnabla(\nu)_\nu = \dim_q \tL(\nu)_\nu$. 
Next, part (6) follows from Lemma \ref{lem: smallest words}  and (5). 
\end{proof}

\begin{corollary} \label{cor: fin gl dim}
As a graded algebra, $\oaklr$ has global dimension $\Norm{\beta}_\theta$. 
\end{corollary}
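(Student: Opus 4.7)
\textbf{Proof proposal for Corollary \ref{cor: fin gl dim}.}
The plan is to combine the affine highest weight structure on $\oaklr$ established in Theorem \ref{thm: main result} with a projective-dimension computation in the spirit of Kato's theorem for ordinary KLR algebras in finite type. By parts (1), (3), and (5) of Theorem \ref{thm: main result}, the standards $\{\tdelta(\nu) \mid \nu \in \tgood\}$ together with the anti-lexicographic order on $\tgood$ and the duality $-^\flat$ equip $\fmodv{\oaklr}$ with an affine highest weight structure in the sense of Kleshchev. By the general machinery of such categories, it then suffices to bound the projective dimensions of the standard modules uniformly by $n = \Norm{\beta}_\theta$ (for the upper bound) and to exhibit a single module of projective dimension exactly $n$ (for the lower bound).

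For the upper bound, I would argue by induction along the Lyndon factorization using Lemma \ref{lem: qdim tdelta nu}: each $\tdelta(\nu)$ is built by the orientifold convolution from a symmetric cuspidal piece $\ttriv \acts \Delta(\nu^\theta)$ and ordinary KLR standards $\Delta(\plyn{i})^{\circ n_i}$, one for each non-symmetric Lyndon factor of $\nu_\theta$. The analogous statement for ordinary KLR algebras in finite type $\mathtt{A}$ gives each $\Delta(\plyn{i})^{\circ n_i}$ a projective resolution of length at most $n_i\Norm{\plyn{i}}$. Since the induction functors $\ttt\Ind$ are exact and, by the PBW theorem (Proposition \ref{thm: oklr PBW}), $\oaklr$ is flat over the subalgebras appearing in \eqref{eq: long ind inc}, these resolutions combine into a projective resolution of $\tdelta(\nu)$ of length at most $\Norm{\nu^\theta}_\theta + \sum_i n_i \Norm{\plyn{i}} = n$. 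For the symmetric cuspidal piece, note that $\tdelta(\xi_k)=\tL(\xi_k)$ by part (6) of Theorem \ref{thm: main result}; here one uses the faithful polynomial representation $\tbP_{\tnorm{\xi_k}}^{P,P'}$ from Proposition \ref{pro: polrep oklr} to construct directly a Koszul-type resolution of length $\Norm{\xi_k}_\theta$.

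For the matching lower bound, I would invoke the faithful polynomial representation $\tbP_\beta$. By PBW freeness, $\oaklr$ is a free module over its polynomial subalgebra $\cor[x_1,\dots,x_n]$, which has global dimension $n$; a length-$n$ Koszul resolution of the augmentation residue field of $\cor[x_1,\dots,x_n]$ then induces, after multiplication by $\oaklr$, an $\oaklr$-module of projective dimension exactly $n$, forcing $\operatorname{gldim}\oaklr \geq n$.

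The main obstacle will be the combinatorial bookkeeping in the upper bound: matching the Lyndon-factor contributions with the correct halving for symmetric factors (since $\Norm{\xi_k}_\theta = \Norm{\xi_k}/2$), and checking that the resolutions of the tensor factors combine without length inflation under the orientifold induction \eqref{eq: long ind inc}. This is where the type-$\mathtt{B}$ geometry of the polynomial representation and the presence of $\tau_0$ enter substantively, beyond the pure type-$\mathtt{A}$ input from Kato's theorem; in practice I expect the verification to reduce to an explicit Koszul computation over the type-$\mathtt{B}$ nil-Hecke algebra acting on $\tbP_\beta$.
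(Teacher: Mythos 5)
Your approach is genuinely different from the paper's, and unfortunately it contains substantive gaps that the paper's shorter argument sidesteps.

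The paper's proof is a single adjunction reduction. Because $\lambda = 0$, Proposition \ref{prop: dim of V modules tkpf} guarantees there are no $\theta$-cuspidal words, so in the parabolic $\oklrvv{\beta_0}{\underline{\beta}}$ one may take $\beta_0 = 0$, and the parabolic becomes a \emph{pure} ordinary KLR parabolic $\mathcal{R}(\underline{\beta}) = \mathcal{R}(\norm{\nu^\theta}) \otimes \mathcal{R}(n_l\norm{\plyn{l}}) \otimes \cdots$. Lemma \ref{lem: coind} and Frobenius reciprocity then give
\[
\Ext^i_{\oaklr}(\tnabla(\nu),\tdelta(\mu)) \cong \Ext^i_{\mathcal{R}(\underline{\beta})}(L(\vec{\nu}), \Res_{\underline{\beta}} \tdelta(\mu)),
\]
and the right-hand side vanishes for $i > \Norm{\beta}_\theta$ directly by McNamara's Theorem 4.7. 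No orientifold-specific homological results are needed beyond this transfer. Your proposal, by contrast, attempts to build resolutions on the orientifold side and invokes an affine highest weight structure to close the argument, effectively redoing the homological analysis rather than transferring it.

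The gaps in your proposal are the following. First, the claim that parts (1), (3), (5) of Theorem \ref{thm: main result} equip $\fmodv{\oaklr}$ with an affine highest weight structure is not justified: the theorem gives unitriangularity of decomposition numbers and a duality, but an affine highest weight (or standardly stratified) structure additionally requires that projectives admit $\Delta$-filtrations together with BGG reciprocity, none of which is established in the paper. Second, even granting an upper bound $\operatorname{pd}\,\tdelta(\nu) \leq n$ for all $\nu$, this does not by itself bound the global dimension: the naive upward induction along the exact sequences $0 \to K_\nu \to \tdelta(\nu) \to \tL(\nu) \to 0$ only yields $\operatorname{pd}\,\tL(\nu) \leq n+1$, because $K_\nu$ is built from strictly \emph{smaller} simples. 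Closing this gap requires precisely the Ext-vanishing between costandards and standards (or BGG reciprocity), which is the content of the paper's reduction and which your proposal does not supply. Third, the ``Koszul-type resolution'' of $\tdelta(\xi_k)$ is only a sketch; constructing an explicit length-$\ell(\xi_k)$ resolution over the type-$\mathtt{B}$ nil-Hecke action on $\tbP_\beta$ is nontrivial, and the paper avoids this work entirely by the $\beta_0 = 0$ observation, which places $L(\nu^\theta)$ over the \emph{ordinary} KLR algebra $\mathcal{R}(\norm{\nu^\theta})$ where Kato/McNamara already apply.

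Your lower-bound argument via the polynomial subalgebra $\cor[x_1,\dots,x_n]$ is essentially correct (the claim of equality ``exactly $n$'' for the induced module is unnecessary --- $\geq n$ suffices and follows from Auslander--Buchsbaum applied to a finite-dimensional module), and agrees with the last paragraph of McNamara's proof, which the paper also cites.
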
 

\begin{proof} 
The proof is analogous to \cite[Theorem 4.7]{McNamara-klr1}. For the sake of simplicity, we ignore the grading shifts. Since $\lambda = 0$, $\tgooda$ contains no $\theta$-cuspidal words. Let $\nu, \mu \in \tgood$. If $\nu_\theta = (\plyn{l})^{n_l} \cdots (\plyn{1})^{n_1}$, let $L(\vec{\nu}) = L(\nu^\theta) \otimes L(\plyn{l})^{\circ n_l} \otimes \cdots \otimes L(\plyn{1})^{\circ n_1}$. Also write $\underline{\beta} = (\norm{\nu^\theta},n_l\norm{\plyn{l}}, \cdots, n_1\norm{\plyn{1}})$. Then, Lemma \ref{lem: coind} and adjunction between induction and restriction imply that 
\[
\Ext^i_{\oaklr}(\tnabla(\nu),\tdelta(\mu)) = \Ext^i_{\mathcal{R}(\underline{\beta})}(L(\vec{\nu}), \Res_{\underline{\beta}} \tdelta(\mu)), 
\]
which, by \cite[Theorem 4.7]{McNamara-klr1} is zero for $i > \Norm{\beta}_\theta$. The rest of the proof is exactly the same as in \cite{McNamara-klr1}. 
\end{proof}

%%%%

\bibliographystyle{myamsalpha}

\providecommand{\bysame}{\leavevmode\hbox to3em{\hrulefill}\thinspace}
\providecommand{\MR}{\relax\ifhmode\unskip\space\fi MR }
% \MRhref is called by the amsart/book/proc definition of \MR.
\providecommand{\MRhref}[2]{%
  \href{http://www.ams.org/mathscinet-getitem?mr=#1}{#2}
}
\providecommand{\href}[2]{#2}

\end{document}